\newtheorem{thm}{Theorem}[section]
\newtheorem{cor}[thm]{Corollary}
\newtheorem{lem}{Lemma}[section]
\newtheorem{prop}{Proposition}[section]
\theoremstyle{definition}
\newtheorem{defn}{Definition}[section]
\theoremstyle{remark}
\newtheorem{rem}{Remark}[section]
\numberwithin{equation}{section}
\numberwithin{equation}{section}
\newcounter{saveeqn}
\newcommand{\EM}[1]{\mathbf{#1}}
\title[A novel UCP and determining conducive medium scatterers]{ On a novel UCP result and its application to inverse conductive scattering}
\author{Huaian Diao}
\address{School of Mathematics and Key Laboratory of Symbolic Computation and Knowledge Engineering of Ministry of Education, Jilin University, Changchun 130012, China}
\email{diao@jlu.edu.cn}
\author{Xiaoxu Fei}
\address{School of Mathematics and Statistics, Central South University, Changsha 410083, China\vspace*{-2mm}}
\address{and\vspace*{-2mm}}
\address{Department of Mathematics, City University of Hong Kong, Kowloon, Hong Kong SAR, China}
\email{feixx0921@163.com}
\author{Hongyu Liu}
\address{Department of Mathematics, City University of Hong Kong, Kowloon, Hong Kong SAR, China}
\email{hongyu.liuip@gmail.com, hongyliu@cityu.edu.hk}
\date{} % Activate to display a given date or no date (if empty),
\begin{document}
\maketitle

	\begin{abstract}

In this paper, we derive a novel Unique Continuation Principle (UCP) for a system of second-order elliptic PDEs and apply it to investigate inverse problems in conductive scattering. The UCP relaxes the typical assumptions imposed on the domain or boundary with certain interior transmission conditions. This is motivated by the study of the associated inverse scattering problem and enables us to establish several novel unique identifiability results for the determination of generalized conductive scatterers using a single far-field pattern, significantly extending the results in \cite{CDL20, DCL21}.
A key technical advancement in our work is the combination of Complex Geometric Optics (CGO) techniques from \cite{CDL20, DCL21} with the Fourier expansion method to microlocally analyze corner singularities and their implications for inverse problems. We believe that the methods developed can have broader applications in other contexts.

\medskip
	\noindent{\bf Keywords:}~~Unique Continuation Principle, conductive medium scattering, inverse problem, uniqueness, single far-filed measurement, polygonal-nest and polygonal-cell structures

\medskip
	 \noindent\textbf{2020 Mathematics Subject Classification: 35Q60, 78A46 (primary); 35P25, 78A05, 81U40 (secondary).}
	\end{abstract}

   \section{Introduction}
  \subsection{The main results.}

  This paper addresses a novel type of the unique continuation principle (UCP) for a system of partial differential equations (PDEs) and applies this principle to investigate inverse problems in conductive medium scattering. In this subsection, we outline the main results of our study, deferring detailed proofs to subsequent sections. Initially, we introduce  the geometric setup and notations for our study.

Let $(r,\vartheta)$ denote the polar coordinates in $\mathbb{R}^2$, where $\mathbf{x}=(x_1,x_2)=(r\cos\vartheta,r\sin\vartheta)\in \mathbb{R}^2$. For a given point $\mathbf{x}_0$, we denote $B_{r_0}(\mathbf{x}_0)$ as the open disk centered at $\mathbf{x}_0$ with radius $r_0\in \mathbb{R}_+$, and we simply denote $B_{r_0}$ as $B_{r_0}(\mathbf{0})$. Let a sector $\mathcal{S}$  be defined as follows:
\begin{equation}\label{eq:s cor}
\mathcal{S}=\{\mathbf{x}\in \mathbb{R}^2~|~ \vartheta_m\leq\arg(x_1+\mathrm{i}x_2)\leq\vartheta_M\},
\end{equation}
where $-\pi \leq \vartheta_m<\vartheta_M<\pi$, $\vartheta_M-\vartheta_m\in(0,\pi)$ and $\mathrm{i}=\sqrt{-1}$. Additionally, the two boundary lines of $\mathcal{S}$ are given by
    \begin{subequations}
   \begin{align}
   	&\Gamma^{+}=\{\mathbf x\in \mathbb R^2~|~ \mathbf x=(r\cos\vartheta_M,r\sin\vartheta_M)^{\top },r>0\},\label{eq:gamma+} \\
   &\Gamma^{-}=\{\mathbf x\in \mathbb R^2~|~ \mathbf x=(r\cos\vartheta_m,r\sin\vartheta_m)^{\top },r>0\}.\label{eq:gamma-}
      \end{align}
   \end{subequations}
Furthermore, define
  \begin{equation}\label{1eq:sr0}
   \mathcal S_{r_0}=\mathcal S\cap B_{r_0},\ \Gamma^\pm_{r_0}=\Gamma^{\pm}\cap B_{r_0},\ \Lambda_{r_0}=\mathcal S\cap \partial B_{r_0}.
   \end{equation}
The opening angle of $\mathcal S_{r_0}$ is $\vartheta_M-\vartheta_m$. If $\frac{\vartheta_M-\vartheta_m}{\pi}$ is a \textit{rational number}, $\mathcal S_{r_0}$ is termed a \textit{rational corner}; otherwise, it is an \textit{irrational corner}.

Theorem \ref{thm:ucp} shows a novel type of  UCP result associated with a PDE system described by \eqref{eq:PB}. The detailed proof of Theorem \ref{thm:ucp} is given in Section \ref{sec:2}.

%Since the Laplacian operator $\Delta$ is invariant under rigid motion,

  \begin{thm}\label{thm:ucp}
  Let $D$ and $\Omega$  be two bounded Lipschitz domains in $\mathbb R^2$ with   connected complements $\mathbb R^2\setminus \overline{D}$ and $\mathbb R^2\setminus \overline{\Omega}$ respectively, where $\overline {\Omega} \subset D$. Assume  $\gamma_1\in \mathbb R_{+}$ and $\gamma_2\in L^\infty(D)\cap H^2(\Omega)$ with ${\rm supp}(\gamma_2-\gamma_1)\subset \Omega$ are given. Suppose  that $\Omega \cap B_{r_0}={\mathcal S}_{r_0}$, where ${\mathcal S}_{r_0}$ is defined by \eqref{1eq:sr0},  $r_0\in \mathbb R_+$ is sufficiently small and $\mathbf 0\in \partial \Omega$. The line segements $\Gamma^\pm_{r_0}$ are defined in \eqref{1eq:sr0}. Consider $\mathbf u=(v,w)^{\top}\in H^{1}(D)\times H^1(D)$ solving the PDE system:
  \begin{equation}\label{eq:PB}
  \begin{cases}
  \mathcal P\mathbf u=0\quad \mbox{in}\quad D,\\
  \mathcal B\mathbf u=0\quad \mbox{on} \quad \Gamma^\pm_{r_0},
  \end{cases}
  \end{equation}
    where
   \begin{equation}\label{op:PB}
  \mathcal P=\left (
  \begin{matrix}
  \Delta +\gamma_1&0\\
  0&\Delta+\gamma_2
  \end{matrix}
  \right),
  \quad
  \mathcal B=\left (
  \begin{matrix}
  1&-1\\
  \partial_\nu +\eta &-\partial_\nu
  \end{matrix}
  \right).
  \end{equation}
  Here  the constant $\eta\in \mathbb C$ is   defined on $\Gamma_{r_0}^\pm$ and $\partial_\nu v$ is the exterior normal derivative of $v$ with $\nu$ being the exterior normal of $\partial \Omega$.  If $\mathcal S_{r_0}$ forms an irrational corner and $\eta \neq 0$, then $\mathbf u=\mathbf 0$ in $D$.

  \end{thm}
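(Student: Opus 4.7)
My plan is to adapt the complex-geometric-optics (CGO) plus corner-Fourier-expansion machinery of \cite{CDL20,DCL21} to the second-order elliptic system \eqref{eq:PB}, using the jump parameter $\eta$ on $\Gamma^\pm_{r_0}$ to extract information about $v$ at the corner vertex. The argument naturally splits into three parts: (i) derive an integral identity by pairing the PDE against a harmonic CGO test function concentrated at $\mathbf 0$; (ii) expand $v$ in a Fourier-Bessel series at $\mathbf 0$ and match orders in the CGO parameter $s$ to force all Fourier coefficients of $v$ to vanish, using both $\eta\ne 0$ and the irrationality of the corner; (iii) propagate $v\equiv 0$ throughout $D$ by analyticity, and then obtain $w\equiv 0$ via classical unique continuation with zero Cauchy data on $\Gamma^\pm_{r_0}$.

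After translating and rotating coordinates I may assume $\mathbf 0$ is the vertex and $\overline{\mathcal S_{r_0}}\setminus\{\mathbf 0\}\subset\{x_1<0\}$, and I introduce the harmonic CGO $u_0(\mathbf x;s)=\exp\bigl(s(\mathbf d+\mathrm{i}\mathbf d^\perp)\cdot\mathbf x\bigr)$ with $\mathbf d=(1,0)^\top$, so that $|u_0|=e^{sx_1}$ decays exponentially on $\overline{\mathcal S_{r_0}}\setminus\{\mathbf 0\}$ as $s\to+\infty$. Applying Green's second identity on $\mathcal S_{r_0}$ to the pairs $(v,u_0)$ and $(w,u_0)$, subtracting, and inserting the transmission conditions $v-w=0$ and $\partial_\nu(v-w)=-\eta v$ on $\Gamma^\pm_{r_0}$, I obtain
\begin{equation*}
-\eta\!\int_{\Gamma^+_{r_0}\cup\Gamma^-_{r_0}}u_0\,v\,dS\;=\;\int_{\mathcal S_{r_0}}(\gamma_2 w-\gamma_1 v)\,u_0\,d\mathbf x\;+\;I_\Lambda(s),
\end{equation*}
where the arc term $I_\Lambda(s)$ is $O(e^{-cs})$ by exponential decay of $u_0$ on $\Lambda_{r_0}$; repeating the construction with the conjugate CGO $\bar u_0$ yields a second such identity.

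For step (ii), since $\gamma_1$ is a positive constant, $v$ is real-analytic in $B_{r_0}$ and admits the Fourier-Bessel expansion $v(r,\vartheta)=\sum_{n\ge 0}[\alpha_n\cos n\vartheta+\beta_n\sin n\vartheta]J_n(\sqrt{\gamma_1}\,r)$, while $w\in H^2_{\mathrm{loc}}(D)\hookrightarrow C^0$ satisfies $w(\mathbf 0)=v(\mathbf 0)$ by the transmission condition. Using the elementary asymptotic
\[
\int_0^{r_0}r^n e^{sr\,e^{\mathrm{i}\vartheta_\pm}}\,dr=\frac{(-1)^{n+1}\,n!}{s^{n+1}\,e^{\mathrm{i}(n+1)\vartheta_\pm}}+O(e^{-cs}),\qquad s\to+\infty,
\]
both identities become formal power series in $s^{-1}$. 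Matching at order $s^{-1}$ yields $\eta\,\alpha_0\,(e^{-\mathrm{i}\vartheta_M}+e^{-\mathrm{i}\vartheta_m})=0$; since $\vartheta_M-\vartheta_m\in(0,\pi)$ implies the bracket is non-vanishing, the hypothesis $\eta\ne 0$ forces $\alpha_0=v(\mathbf 0)=0$. Proceeding inductively, at order $s^{-(n+1)}$ with $n\ge 1$ and under the induction hypothesis $\alpha_k=\beta_k=0$ for $k<n$, the two identities (with $u_0$ and $\bar u_0$) reduce to a $2\times 2$ complex linear system in $(\alpha_n,\beta_n)$ whose determinant is proportional to $\sin\bigl(n(\vartheta_M-\vartheta_m)\bigr)\sin\bigl((n+1)(\vartheta_M-\vartheta_m)\bigr)$. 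This is non-zero for every $n\ge 1$ exactly when $(\vartheta_M-\vartheta_m)/\pi\notin\mathbb Q$, so the irrational-corner hypothesis (together with $\eta\ne 0$) forces $\alpha_n=\beta_n=0$.

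The main obstacle is making the asymptotic matching rigorous in the presence of the bulk integral: one must show that the contributions from $\int(\gamma_2 w-\gamma_1 v)u_0\,d\mathbf x$ do not corrupt the line-integral coefficient at each order $s^{-(n+1)}$. This is achieved by a bootstrap argument showing that $\gamma_2 w-\gamma_1 v$ vanishes at $\mathbf 0$ to increasing order, exploiting both the transmission identity $w|_{\Gamma^\pm_{r_0}}=v|_{\Gamma^\pm_{r_0}}$ (so that $w$ inherits the order of vanishing of $v$ along the edges) and the regularity $\gamma_2\in H^2(\Omega)$, which via elliptic regularity furnishes pointwise Taylor data for $w$ sufficient to quantify the bulk contribution at each order. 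Once all $\alpha_n,\beta_n$ are shown to vanish, $v\equiv 0$ in $B_{r_0}$, and real-analyticity of $v$ on the connected Lipschitz domain $D$ gives $v\equiv 0$ in $D$; the transmission conditions then force $w|_{\Gamma^\pm_{r_0}}=\partial_\nu w|_{\Gamma^\pm_{r_0}}=0$, and the classical weak UCP for $\Delta w+\gamma_2 w=0$ with $\gamma_2\in L^\infty(D)$ propagates this vanishing throughout the connected domain $D$, giving $\mathbf u\equiv\mathbf 0$ in $D$.
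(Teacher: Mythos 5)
Your overall strategy---Green's identity against a CGO, Fourier--Bessel expansion of $v$ at the vertex, induction on the order using two choices of the CGO phase, and a $2\times 2$ determinant that is nonzero precisely because the corner is irrational (your $\sin\bigl(n\beta\bigr)\sin\bigl((n+1)\beta\bigr)$ is, after a product-to-sum identity, the paper's $\cos\beta-\cos((2n+1)\beta)$)---is exactly the paper's argument, and your endgame (analyticity of $v$, then weak UCP for $w$ from vanishing Cauchy data on $\Gamma^\pm_{r_0}$) is identical. The one genuine gap is in your treatment of the bulk term, and it is not a technicality: it is the step the whole induction hinges on. Because you test against the \emph{harmonic} exponential $e^{s(\mathbf d+\mathrm i\mathbf d^\perp)\cdot\mathbf x}$, your identity carries the volume term $\int_{\mathcal S_{r_0}}(\gamma_2 w-\gamma_1 v)u_0\,d\mathbf x$, which contains $w$. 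At induction order $n$ the line integral you want to isolate has size $s^{-(n+1)}$, while $\int_{\mathcal S_{r_0}}|\mathbf x|^{m}|u_0|\,d\mathbf x\sim s^{-(m+2)}$, so you need $w$ (not just $v$) to vanish at $\mathbf 0$ to order greater than $n-1$ at every stage. Your proposed bootstrap---that $w$ ``inherits the order of vanishing of $v$ along the edges''---does not deliver this: knowing that $w=v$ vanishes to high order \emph{on the two edges} only controls the trace, and upgrading this to interior vanishing at the vertex requires a Kondratiev-type corner expansion of $u=w-v$ (zero Dirichlet data, Neumann data $\eta v$, source $(\gamma_1-\gamma_2)v$). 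Such expansions are valid only up to an order limited by the smoothness of the coefficient, and $\gamma_2\in H^2$ buys you finitely many orders, whereas the induction must run over all $n\in\mathbb N$.

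The paper sidesteps this entirely by choosing the CGO adapted to $\gamma_2$: by Lemma \ref{1-lem:cgo} there is $u_0=(1+\psi)e^{\rho\cdot\mathbf x}$ with $(\Delta+\gamma_2)u_0=0$ in $\mathcal S_{r_0}$, and pairing it with $u=w-v$ yields the bulk term $\int_{\mathcal S_{r_0}}(\gamma_1-\gamma_2)v\,u_0\,d\mathbf x$, which involves only $v$---whose vanishing order at the vertex is precisely the induction hypothesis---at the harmless price of the correction $\psi=\mathcal O(\tau^{-2/3})$. Unless you can supply the all-orders corner expansion of $w$, you should switch to the adapted CGO; the rest of your outline (including your self-contained derivation of $v(\mathbf 0)=0$ from the order-$s^{-1}$ matching, where the paper instead cites an external result from \cite{DFL}) then goes through essentially as written.
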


\begin{rem}
UCP is a fundamental concept in the theory of elliptic partial differential equations (PDEs). For instance, UCP can be applied to investigate the solvability and stability of partial differential equations {\cite{ARRV,Tat,SNS}}. In Theorem \ref{thm:ucp}, we establish a novel UCP for the PDE system \eqref{eq:PB}. Specifically, if a solution $\mathbf u=(v,w)^{\top}$ to \eqref{eq:PB} exists and satisfies certain boundary conditions described by \eqref{eq:PB} on two intersecting line segments within $D$, forming an irrational convex corner, then $\mathbf u$ must be identically zero in $D$. Furthermore, the boundary conditions of $\mathbf u$ on two intersecting line segments describes certain transmission properties of $v$ and $w$ across these  segments.  This type of transmission boundary condition is particularly relevant for studying the conductive scattering problem, which will be discussed in detail in subsection \ref{subsec:1.2}.
 To the best of our knowledge, this type of UCP is particularly intriguing compared to existing literature on UCP for elliptic systems. It has important applications in the study of the inverse conductive medium scattering problem  \eqref{eq:forward}, which will be further elaborated in the subsequent subsection.
\end{rem}

\begin{rem} The PDE system described by \eqref{eq:PB} in Theorem \ref{thm:ucp} originates from the invisibility associated with the time-harmonic conductive medium scattering problem \eqref{eq:sca}. The phenomenon of invisibility in inverse scattering problems has been extensively studied (see, e.g., \cite{BL2021, BPS2014, CX, CV23, CVX23, DCL21, EH18, KSS24, PSV, PS21} and the references therein). A direct result of Theorem \ref{thm:ucp} is Corollary \ref{thm:invis}, which establishes that if a medium scatterer  contains a convex irrational corner, invisibility cannot occur. In other words, for a scatterer to be invisible while possessing a convex corner, the corner must necessarily be rational, not irrational. This conclusion aligns with the specific example provided in \cite[Example 1.5]{KSS24}, where invisible  anisotropic medium scatterers feature a rational corner with an angle of the form $\ell\pi/m$ for integers $m \geq 2$ and $1 \leq \ell < 2m - 1$. Thus, Corollary \ref{thm:invis} is nearly optimal, as demonstrated by the existence of invisible scatterers with rational corners.
	
 Moreover, the condition $\eta \neq 0$ in Theorem \ref{thm:ucp} can be interpreted as a nondegeneracy condition, which is necessary for establishing Theorem \ref{thm:ucp}. Analogous nondegeneracy conditions have been proposed to characterize invisible medium scatterers; see \cite{KSS24,CV23}. The nondegeneracy condition \cite[Equation (1.6)]{KSS24} for the invisible medium scatterer  requires a certain non-vanishing property associated with the incident wave in a neighborhood of a point on the boundary of the scatterer. This condition is used to prove that the boundary of the invisible medium scatterer is $C^{1,\alpha}$ around the underlying point. On the contrary, if the invisible medium scatterer has a corner, the nondegeneracy condition is not satisfied, as demonstrated in \cite[Examples 1.5 and 1.7]{KSS24}. Such nondegeneracy conditions were also discussed in \cite[Section 3.2]{CV23}. Therefore, the nondegeneracy condition is necessary for characterizing invisible medium scatterers in \cite{CV23,KSS24}.

 % and the incident wave does not satisfy the nondegeneracy condition, then the scatterer scatters the incident wave nontrivially\cite[Examples 1.5 and 1.7]{KSS24}. Therefore, the condition $\eta\neq0$ is necessary.
\end{rem}

 We then introduce the conductive medium scattering problem \eqref{eq:sca}, which will be elaborated upon further in Subsection \ref{subsec:1.2}. Let  $\Omega$  be a conductive medium scatterer suited in a homogenous isotropic background medium $\mathbb R^2$ with the medium configurations $q$ and $\eta$, which is a bounded Lipschitz domain  with a connected complement $\mathbb R^2\setminus \overline{\Omega}$.  The medium parameter $q\in L^\infty (\Omega)$ characterises  the refractive index of the medium $\Omega$ with ${\rm supp }(q-1)\subset \Omega$ and the constant $\eta$ defined on $\partial \Omega$  signifies the conductive property of $\Omega$. In what follows, we write $(\Omega;q,\eta)$ to describe the conductive medium scatterer $\Omega$ with the physical configurations $q$ and $\eta$.

  For a given incident wave $u^i$ impinging on $\Omega$, it generates a scattered wave $u^s$. The total wave $u$ is given by $u = u^i + u^s$. The incident wave $u^i$ is a solution to the Helmholtz equation.
  %can be either a plane wave of the form \eqref{eq:in plane}  or a point source of the form \eqref{eq:in point}.
  The conductive medium scattering problem in $\mathbb{R}^2$ can be modeled as:
\begin{equation}\label{eq:sca}
  \begin{cases}
  \Delta u^{-} + k^2 q u^{-} = 0 \hspace{3.0cm} \text{in } \Omega, \\
  \Delta u^{+} + k^2 u^{+} = 0 \hspace{3.20cm} \text{in } \mathbb{R}^2 \setminus \overline{\Omega}, \\
  u^{+} = u^{-},\quad \partial_{\nu} u^{-} = \partial_{\nu} u^{+} + \eta u^{+} \hspace{0.65cm} \text{on } \partial \Omega, \\
  u^{+} = u^{i} + u^{s} \hspace{3.9cm} \text{in } \mathbb{R}^2 \setminus \overline{\Omega}, \\
  \lim_{r \to \infty} r^{1/2} \left( \partial_r u^s - i k u^s \right) = 0, \hspace{0.55cm} \quad r = |\mathbf{x}|,
  \end{cases}
\end{equation}
where $k \in \mathbb{R}_+$ is the wave number, and $\nu \in \mathbb{S}^1$ is the exterior unit normal vector to $\partial \Omega$. The third equation in \eqref{eq:sca} describes the conductive transmission property of the total wave $u$ across $\partial \Omega$. The final equation refers to the Sommerfeld radiation condition, characterizing the outgoing nature of the scattered wave $u^s$. Moreover, $u^{\pm}$ and $\partial_{\nu}^{\pm} u$ denote the limits of $u$ and $\partial_{\nu} u$ from the exterior and interior of $\Omega$, respectively.

The direct scattering problem \eqref{eq:sca} involves solving for the scattered wave $u^s$ given the incident wave $u^i$ and the conductive medium scatterer $(\Omega, q, \eta)$, where $\Omega$ is a bounded Lipschitz domain in $\mathbb{R}^2$ with a connected complement such that $\Omega \Subset B_R \ (R\in \mathbb R_+) $  and $u^i$ can be a solution to the Helmholtz equation $\Delta u^i+k^2u^i=0$ at least in a neighborhood of $\Omega$, including, a plane wave of the form
\begin{align}\notag%\label{eq:in plane}
	u^i(\mathbf{x}) = e^{{\rm i} k \mathbf{x} \cdot \mathbf{d}}
\end{align}
or a Herglotz wave with kernel $g_j\in L^2(\mathbb S^{1})$ of the form
\begin{align}\notag%\label{eq:in herg}
u^{i}(\mathbf x)=\int_{\mathbb S^1} e^{\mathrm ik \xi \cdot \mathbf x}g_j(\xi)\mathrm d \sigma(\xi),\ \xi\in \mathbb S^{1},\ \mathbf x\in \mathbb R^2
\end{align}
 or a point source of the form
 \begin{align}\notag%\label{eq:in point}
 	u^i(\mathbf{x}; \mathbf{z}_0) = H_0^{(1)}(k |\mathbf{x} - \mathbf{z}_0|)
 \end{align}
  associated with a wave number $k\in \mathbb R_+$.
Here, $\mathbf{d} \in \mathbb{S}^1$ is the incident direction of the plane wave, $H_0^{(1)}$ denotes the zeroth-order Hankel function of the first kind, and $\mathbf{z}_0$ signifies the location of the point source with $\mathbf{z}_0\in \mathbb R^2 \setminus \overline B_R $ with  $\Omega\Subset B_R$ and $R\in \mathbb R_+$. The function $q \in L^\infty (\Omega)$ characterizes the refractive index of the medium scatterer $\Omega$ with ${\rm supp }(q) \subset \Omega$, and the constant conductive parameter $\eta$ defined on $\partial \Omega$ signifies the conductive property of $\Omega$. On the other hand, the inverse problem corresponding to \eqref{eq:sca} aims to determine the shape of the underlying conductive medium scatterer $(\Omega, q, \eta)$ and its physical configuration $(q, \eta)$ through the measurement of the scattered wave. The novel UCP derived in Theorem \ref{thm:ucp} has several applications in the conductive medium scattering problem \eqref{eq:sca}. Specifically, in  Corollary \ref{thm:invis}, utilizing the UCP for \eqref{eq:PB}, it can be shown that the conductive medium scatterer in \eqref{eq:sca} containing a convex irrational corner cannot be invisible. The detailed proof of Corollary \ref{thm:invis} is postponed to Section \ref{sec:2}.

\begin{cor}\label{thm:invis}
%Consider the conductive medium scattering problem modeled by \eqref{eq:sca} associated with a given incident wave $u^i$ associated with the wave number $k$, which is defined by \eqref{eq:in plane} or \eqref{eq:in point}, where $\Omega$ in \eqref{eq:sca} describe  the corresponding conductive medium scatterer $(\Omega,q,\eta)$.

Consider the conductive medium scattering problem modeled by \eqref{eq:sca}, associated with a given incident wave $u^i$ satisfying the Helmholtz equation with wave number $k$. Let the bounded Lipschitz domain $\Omega\subset \mathbb R^2$ with a connected complement in \eqref{eq:sca} describe the corresponding conductive medium scatterer $(\Omega, q, \eta)$, where ${\rm supp}(q)\subset \Omega$, $q\in L^\infty(\Omega)$ and $\eta$ is a nonzero constant. If $ \Omega$ has a convex irrational corner $\mathbf x_0$ with $q \in L^\infty(\Omega) \cap H^2(\Omega \cap B_{r_0}(\mathbf x_0 ))$, where  $r_0\in \mathbb R_+$ is sufficiently small, then $\Omega$ cannot be invisible for any wave number $k\in \mathbb R_+$.
%$u^\infty$ can not be identically zero.
\end{cor}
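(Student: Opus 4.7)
My plan is a contradiction argument built on Theorem \ref{thm:ucp}. Suppose $(\Omega,q,\eta)$ were invisible at some wavenumber $k\in\mathbb R_+$, that is, the far-field pattern of the scattered wave $u^s$ vanishes identically on $\mathbb S^1$. Rellich's lemma together with classical unique continuation for $\Delta+k^2$ in the connected exterior $\mathbb R^2\setminus\overline\Omega$ yield $u^s\equiv 0$ there, so $u^+ = u^i$ on the exterior and on $\partial\Omega$. Substituting this into the conductive transmission conditions in \eqref{eq:sca} I obtain
\begin{equation*}
u^- - u^i = 0,\qquad \partial_\nu u^- - \partial_\nu u^i - \eta\, u^i = 0 \quad\text{on } \partial\Omega,
\end{equation*}
which, restricted to the two boundary segments $\Gamma^\pm_{r_0}$ of the corner at $\mathbf x_0$, is exactly $\mathcal B(u^i,u^-)^\top = \mathbf 0$ in the sense of \eqref{op:PB}.

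Next, I translate $\mathbf x_0$ to the origin and localize the argument to a sufficiently small neighborhood $D$ in which $\Omega\cap D$ is precisely the sector $\mathcal S_{r_0}$. Put $v = u^i|_D$, $w = u^-|_{\mathcal S_{r_0}}$ (extended to $D$ as described below), $\gamma_1 = k^2$ and $\gamma_2 = k^2 q$. Then $\Delta v+\gamma_1 v=0$ and $\Delta w+\gamma_2 w = 0$ in the respective interior regions, and the hypothesis $q\in L^\infty(\Omega)\cap H^2(\Omega\cap B_{r_0}(\mathbf x_0))$ provides the $L^\infty(D)\cap H^2(\Omega)$ regularity of $\gamma_2$ required by Theorem \ref{thm:ucp} after restricting to $B_{r_0}(\mathbf x_0)$. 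With the convex irrational corner $\mathcal S_{r_0}$ and $\eta\ne 0$, Theorem \ref{thm:ucp} then forces $\mathbf u\equiv\mathbf 0$ in $D$, so in particular $u^i\equiv 0$ on a nonempty open set. Real-analyticity of any admissible incident wave (plane wave, Herglotz wave, or point source $H_0^{(1)}(k|\mathbf x-\mathbf z_0|)$ with $\mathbf z_0\notin\overline\Omega$) then propagates this to $u^i\equiv 0$ everywhere, the desired contradiction.

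The technical step I expect to be the most delicate is arranging the extension of $w = u^-$ from $\mathcal S_{r_0}$ to all of $D$ so that $w\in H^1(D)$ and the full distributional equation $\Delta w+\gamma_2 w=0$ holds in $D$, with $\gamma_2 = k^2$ outside $\Omega$. This is essentially a local Cauchy problem for the Helmholtz equation from $\Gamma^\pm_{r_0}$ with Cauchy data $(u^-,\partial_\nu u^-)$ read off from the interior side; since $\Gamma^\pm_{r_0}$ are flat real-analytic segments and $u^-$ has sufficient Sobolev regularity near the corner by elliptic regularity (via $q\in H^2$ there), such an extension can be produced locally in $B_{r_0}(\mathbf x_0)$ by a Cauchy--Kovalevskaya or reflection-type construction across each of $\Gamma^\pm_{r_0}$. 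Once this local setup is in place, the conclusion is an immediate invocation of Theorem \ref{thm:ucp}.
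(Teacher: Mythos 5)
Your overall strategy is exactly the paper's: assume invisibility, use Rellich to get $u^+=u^i$ outside $\Omega$, read off the transmission conditions as $\mathcal B\mathbf u=\mathbf 0$ on $\Gamma^\pm_{r_0}$, invoke Theorem \ref{thm:ucp} to get $u^i\equiv 0$ locally, and contradict this via unique continuation/analyticity of the incident wave. That core is correct.

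The one step you flag as delicate, however, is handled the wrong way, and as proposed it would fail. You do not need to solve a local Cauchy problem for the Helmholtz equation from $\Gamma^\pm_{r_0}$ with interior data $(u^-,\partial_\nu u^-)$: that Cauchy problem is ill-posed, Cauchy--Kovalevskaya requires analytic data (which $u^-$ near the corner is not), and the two separate reflections across $\Gamma^+_{r_0}$ and $\Gamma^-_{r_0}$ would have no reason to agree on their overlap. Moreover, an extension obtained that way would be $C^1$ across $\partial\Omega$, which is incompatible with the conductive jump $\partial_\nu u^-=\partial_\nu u^+ +\eta u^+$ when $\eta\neq 0$. The paper's choice is the trivial one: take $w=u^-\chi_{\Omega}+u^i\chi_{D\setminus\overline{\Omega}}$, i.e.\ the total wave $u$ restricted to $D=B_R\Supset\Omega$ (using $u^+=u^i$ from Rellich), with $\gamma_2=k^2q\chi_{\Omega}+k^2\chi_{D\setminus\overline{\Omega}}$. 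This is automatically in $H^1(D)$ by well-posedness of \eqref{eq:sca} and the first transmission condition $u^+=u^-$ on $\partial\Omega$; the fact that the second equation of $\mathcal P\mathbf u=0$ only holds away from $\partial\Omega$ (because of the normal-derivative jump) is immaterial, since the proof of Theorem \ref{thm:ucp} only uses the equations inside the sector $\mathcal S_{r_0}$ together with the boundary conditions on $\Gamma^\pm_{r_0}$. With that substitution your argument closes; the rest is as in the paper.
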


% Let $u^s$ be the scattered wave associated with $u^i$ and the conductive medium scatterer $(\Omega,q,\eta)$ and  $u^\infty$ be the far-field pattern corresponding to $u^s$.

%Using the UCP for \eqref{eq:PB} in Theorem \ref{thm:ucp}, we can study the unique identifiability for the conductive medium scattering problem \eqref{eq:sca}. In fact, in Theorem \ref{thm:uni pre}, we state the local and global unique determination for a conductive medium scatterer by a single measurement of the scattered wave. The precise statement of Theorem \ref{thm:uni pre} is given by Theorem \ref{1-thm:unique}, where the corresponding proof is provided there.

Using the unique continuation property (UCP) for \eqref{eq:PB} as established in Theorem \ref{thm:ucp}, we can address the unique identifiability for the inverse problem of the conductive medium scattering problem \eqref{eq:sca}. Specifically, in Corollary \ref{thm:uni pre}, we present both local and global unique determination of a conductive medium scatterer based on a single measurement of the scattered wave, where the single measurement refers to that taken under a fixed incident wave. The unique identifiability by a single measurement is challenging and has a long and colorful history in inverse scattering problems, often referred to as Schiffer problem in the literature; see \cite{CK18, CK} and the references therein for more details. The detailed statement of Corollary \ref{thm:uni pre} is provided in Theorem \ref{1-thm:unique}, along with the corresponding proof. Before that, we introduce the  concepts of the irrational angle, irrational polygon and convex irrational polygon.  A similar definition for irrational polygons can be founded in {\cite{CDLZ20}}.

\begin{defn}\label{1-def:polygon}
%Suppose $\Omega\subset \mathbb{R}^2$ is a polygon. Let $\omega=\lambda\cdot \pi$, where $\lambda\in (0,2)$, be an interior angle of $\Omega$. An angle $\omega$ is considered \textit{irrational} if $\lambda$ is an irrational number; otherwise, it is considered \textit{rational}. Furthermore, if $\lambda \in (0,1)$ is an irrational number, then the angle $\omega$ is considered \text{convex irrational}. The polygon $\Omega$ is termed an \textit{irrational polygon} if each of its interior angles is an irrational angle; otherwise, it is considered rational. Moreover, the irrational polygon $\Omega$ is called a \textit{convex irrational polygon} if it is convex.

Suppose $\Omega \subset \mathbb{R}^2$ is a polygon. Let $\omega = \lambda \cdot \pi$, where $\lambda \in (0, 2)$, be an interior angle of $\Omega$. The angle $\omega$ is termed \textit{irrational} if $\lambda$ is an irrational number; otherwise, it is termed \textit{rational}. Furthermore, if $\lambda \in (0, 1)$ is an irrational number, the angle $\omega$ is referred to as a \textit{convex irrational} angle. The polygon $\Omega$ is called an \textit{irrational polygon} if all its interior angles are irrational; otherwise, it is considered rational. Additionally, the irrational polygon $\Omega$ is referred to as a \textit{convex irrational polygon} if it is convex.

\end{defn}

%\Red{\begin{defn}\label{def:difference}
%For given two scatterers $\Omega_1$ and $\Omega_2$, then the difference  between  $\Omega_1$ and $\Omega_2$ is defined as
%\begin{align}
%\Omega_1\aleph\Omega:=(Co(\Omega_1)\setminus Co({\Omega_2}))\cup(Co(\Omega_2)\setminus Co({\Omega_1})),\notag
%\end{align}
%where the notation $Co(\Omega_i)$ is the convex hull of $\Omega_i,i=1,2.$
%\end{defn}}

\begin{cor}\label{thm:uni pre}
Considering the conductive medium scattering problem \eqref{eq:sca}, if the measurements of the two scattered wave corresponding to two conductive medium scatterers, $\Omega_1$ and $\Omega_2$, under a fixed incident wave are identical, then the difference  between these two scatterers, {$ \Omega_1\aleph\Omega:=(\text{Co}(\Omega_1)\setminus \text{Co}({\Omega_2}))\cup(\text{Co}(\Omega_2)\setminus \text{Co}({\Omega_1}))$}, cannot contain a convex irrational corner, {where $\text{Co}(\Omega_i),i=1,2$ is the convex hull of $\Omega_i$}. Furthermore, if two convex irrational polygonal conductive medium scatterers, $\Omega_1$ and $\Omega_2$, produce the same scattered wave measurements for a fixed incident wave, then $\Omega_1$ and $\Omega_2$ must be identical.
\end{cor}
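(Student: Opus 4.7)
My plan is a contradiction argument that funnels the two measured scattered waves into the hypotheses of Theorem~\ref{thm:ucp}, combined with a standard Rellich-type propagation. Assume the far-field patterns (equivalently, the scattered waves in the unbounded component of $\mathbb R^2\setminus\overline{\Omega_1\cup\Omega_2}$) of the two conductive scatterers $(\Omega_j;q_j,\eta_j)$, $j=1,2$, coincide. By Rellich's lemma and unique continuation for the Helmholtz equation, the corresponding total waves satisfy $u_1=u_2$ throughout the unbounded connected component $\mathcal U$ of $\mathbb R^2\setminus\overline{\Omega_1\cup\Omega_2}$; in particular $u_1^+=u_2^+$ and $\partial_\nu u_1^+=\partial_\nu u_2^+$ on any piece of $\partial\Omega_j$ that lies in $\partial\mathcal U$.

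Suppose, toward a contradiction, that $\Omega_1\aleph\Omega_2$ contains a convex irrational corner $\mathbf x_0$, say $\mathbf x_0\in \text{Co}(\Omega_1)\setminus\text{Co}(\Omega_2)$. Since $\mathbf x_0$ is a vertex of the convex set $\text{Co}(\Omega_1)$, it is an extreme point of $\Omega_1$, hence a boundary vertex of $\Omega_1$; moreover $\mathbf x_0\notin\overline{\Omega_2}$. For $r_0>0$ sufficiently small, $B_{r_0}(\mathbf x_0)\cap\overline{\Omega_2}=\emptyset$ and $\Omega_1\cap B_{r_0}(\mathbf x_0)=\mathcal S_{r_0}$ (after a rigid motion) is an irrational convex sector with the two straight sides $\Gamma^\pm_{r_0}\subset\partial\Omega_1$. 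Set $D:=B_{r_0}(\mathbf x_0)$, $\Omega:=\mathcal S_{r_0}$ and
\[
v:=u_2|_D,\qquad w:=u_1|_D.
\]
Then $v$ solves $\Delta v+k^2 v=0$ in $D$ (so $\gamma_1=k^2$), while $w$ solves $\Delta w+k^2 q_1 w=0$ in $\Omega$ (so $\gamma_2=k^2 q_1\in L^\infty(D)\cap H^2(\Omega)$). On $\Gamma^\pm_{r_0}$ the conductive transmission conditions in \eqref{eq:sca}, combined with $u_1^+=u_2$ and $\partial_\nu u_1^+=\partial_\nu u_2$ from the Rellich step, yield
\[
v-w=0,\qquad (\partial_\nu+\eta_1)v-\partial_\nu w=0,
\]
which is precisely $\mathcal B\mathbf u=\mathbf 0$ for $\mathbf u=(v,w)^\top$. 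Since the corner is irrational and $\eta_1\neq 0$, Theorem~\ref{thm:ucp} gives $\mathbf u\equiv\mathbf 0$ in $D$.

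Unique continuation for the Helmholtz equation then propagates $u_2\equiv 0$ throughout the exterior of $\Omega_2$, forcing $u_2^s=-u^i$ outside $\Omega_2$. This violates the Sommerfeld radiation condition on $u_2^s$, since the prescribed $u^i$ (plane wave, Herglotz wave, or point source located outside $B_R$) is non-radiating, a contradiction; this establishes the first claim. For the second claim, if two convex irrational polygons $\Omega_1\neq\Omega_2$ produced the same far-field data, then $\text{Co}(\Omega_j)=\Omega_j$ and at least one of them, say $\Omega_1$, would possess a vertex $\mathbf x_0\in\Omega_1\setminus\Omega_2$; by Definition~\ref{1-def:polygon} that vertex is a convex irrational corner of $\Omega_1\aleph\Omega_2$, contradicting the first part, whence $\Omega_1=\Omega_2$. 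The main technical obstacle is the geometric bookkeeping required to ensure that a convex irrational corner of the symmetric difference of the convex hulls really matches, on a small enough ball, the sector geometry $\mathcal S_{r_0}$ of Theorem~\ref{thm:ucp}, with the other scatterer staying strictly away so that the conductive transmission on $\Gamma^\pm_{r_0}$ translates cleanly into the boundary operator $\mathcal B$.
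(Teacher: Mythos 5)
Your argument is correct and follows essentially the same route as the paper's proof (given as Theorem \ref{1-thm:unique}): Rellich's lemma to identify the two total fields outside $\overline{\Omega_1\cup\Omega_2}$, transfer of the conductive transmission conditions onto the sector boundary at the convex irrational corner, an application of Theorem \ref{thm:ucp} to kill $u_2$ there, and a final contradiction with the radiating nature of $u_2^s$ versus the incident wave. The only differences are cosmetic (you spell out the convex-hull/extreme-point bookkeeping that the paper absorbs into the assumption $\mathcal S_{r_0}\Subset\Omega_1\setminus\overline{\Omega_2}$).
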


%The inverse problem corresponding to  the conductive medium scattering problem \eqref{eq:sca} intends to determine the shape of the underlying conductive medium scatterer $(\Omega,q,\eta)$ and its physical configuration $(q,\eta)$ through the measurement of the scattered wave.

%there is a connection between the UCP and the conductive medium scattering problem \eqref{eq:sca}. First, we show that a conductive medium scatterer must radiate any incident wave non-trivially if it possesses a convex irrational corner, which means that the invisibility can not occur.

% Basing on the above results, we further investigate the inverse problems \eqref{eq:ip} associated with the conductive medium scattering problem \eqref{eq:sca}. The details are presented in Section \ref{sec:2} - Section \ref{sec:4}.

 When the conductive medium scatterer associated with \eqref{eq:sca} has a polygonal-cell or polygonal-nest structure (see Definitions \ref{1-def:cell} and \ref{1-def:pi-nest} for rigorous explanations),   we can uniquely identify the polygonal-nest structure based on a single measurement of the scattered wave, given some a-priori knowledge of the scatterer (see the detailed a-prior information in Definition \ref{2def:adm}). This identification leverages the novel UCP for \eqref{eq:PB}. The detailed statements of these uniqueness results can be found in Theorems \ref{2thm:un-shape} and  \ref{2thm:un-eta-nest}.

In addition to determining the shape of the conductive medium scatterer using the newly derived UCP in Theorem \ref{thm:ucp}, we also establish uniqueness results for identifying the piecewise-linear-polynomial refractive index and constant conductive parameter within the conductive polygonal-cell or polygonal-nest medium scatterer. This is achieved under some a priori information based on a single measurement of the scattered wave. These results are detailed in Theorems  \ref{2thm:un-eta-cell} and  \ref{2thm:un-eta-nest}.

% \begin{thm}\label{thm:cell}
%Under some generalized assumptions, we derive the unique results for determining the polygonal-cell and the polygonal-nest structures of a conductive medium scatterer by a single far-field measurement, including the shape and the physical configurations. The detailed definitions of the geometric structures and theorems can be found in Section \ref{sec:3}.
%\end{thm}

\subsection{The conductive medium scattering and invisibility}\label{subsec:1.2}
The conductive medium scattering problem has important and practical applications, such as the modelling of an electromagnetic object coated with a thin layer of a highly conductive material and magnetotellurics in geophysics\cite{ak92}. Indeed, this kind of the conductive medium scattering problem can be derived by TM (transverse magnetic) polarisation from the corresponding Maxwell scattering system; see more details in \cite{Bon-Liu, CDL20}.

  The direct problem \eqref{eq:sca} aims to solve the scattered wave $u^s$ given the conductive medium scatterer $(\Omega;q,\eta)$ and the incident wave $u^i$.  For the well-posedness of the direct problem \eqref{eq:sca} it was proved that there exists a unique solution $u \in H^1_{loc}(\mathbb R^2) $ to \eqref{eq:sca} (cf. \cite{Bon,Bon-Liu}). Moreover, the scattered field $u^s$ possesses the following  asymptotic behavior
\begin{equation}\label{eq:asy}
u^s(\mathbf {\hat{x}})=\frac{e^{\mathrm ik|\mathbf x|}}{|\mathbf x|^{\frac{1}{2}}} \left( u^\infty({\mathbf {\hat{x}}}) +\mathcal O(|\mathbf x|^{-\frac{1}{2}}) \right), \ |\mathbf x|\to \infty,
\end{equation}
where $\mathbf {\hat{x}}=\frac{\mathbf x}{|\mathbf x|}$ and $u^{\infty}$ is the far-field pattern of $u^s$ associated with $u^i$. Here it can be checked that the far-field pattern $u^\infty$, which is a real analytic function defined in $\mathbb S^1$, corresponds one-to-one to $u^s$ and encodes the information of the scatterer $(\Omega;q,\eta)$. From \eqref{eq:asy} we define the forward operator by
\begin{align}\label{eq:forward}
	\mathcal F ((\Omega;q,\eta); u^i)\rightarrow u^{\infty}(\mathbf {\hat x}).
\end{align}
The inverse problem associated with \eqref{eq:forward} can be formulated as
\begin{align}\label{eq:ip}
	u^{\infty}(\mathbf {\hat x};u^i)\to (\Omega;q,\eta),
\end{align}
which intends  to identify $\Omega$ and its physical configurations $q$ and $\eta$. This type of inverse problem has practical applications in non-destructive testing and radar \cite{CK}, etc.  In this paper we will mainly focus on the uniquely determination of $(\Omega,q,\eta)$ by a single far-field measurement, namely that the far-field pattern $u^{\infty}(\mathbf {\hat x};u^i)$ is generated by a fixed incident wave $u^i$ and a fixed wave number $k$, otherwise we say that it refers to many far-field measurements.

%Furthermore, we are going to show the connection between the conductive medium problem \eqref{eq:forward} and the transmission eigenvalue problem \eqref{eq:trans}.

When $u^{\infty}\equiv0$, no scattering pattern can be observed outside $\Omega$, and hence the scatterer $(\Omega,q,\eta)$ is invisible/transparent with respect to the exterior observation under the wave interrogation by $u^i$.  Then by Rellich's theorem, which indicates that $u^+=u^i$ in $\mathbb R^2\setminus \overline \Omega$, one has $(w,v)=(u^-|_{\Omega},u^i|_{\Omega})$ satisfies the following transmission eigenvalue problem:
  \begin{equation}
  \begin{cases}\label{eq:trans}
  \Delta w+k^2(1+V)w=0 \hspace*{0.9cm}\mbox{in}\ \Omega,\\
  \Delta v+k^2v=0 \hspace*{2.35cm}\mbox{in}\ \Omega,\\
  w=v,\ \partial_\nu w=\partial_\nu v+\eta v\hspace*{0.7cm}\mbox{on}\ \Gamma,
  \end{cases}
  \end{equation}
  where $V=q-1$ and $\Gamma=\partial \Omega$.

  On the other hand, given the bounded Lipschitz domain $\Omega$, $V\in L^\infty(\Omega)$  and the boundary parameter $\eta\in L^\infty(\partial\Omega )$, consider the PDE system \eqref{eq:trans}. If there exists a nontrivial pair of solutions $(w,v)$ to \eqref{eq:trans} associated with $k$, then $k$ is referred to as a transmission eigenvalue and $(w,v)$ is the corresponding pair of transmission eigenfunctions of \eqref{eq:trans}.
  %If $\Gamma=\Gamma_{r_0}^{\pm},$ then we can see that \eqref{eq:trans} coincides with \eqref{eq:vhelm} and \eqref{eq:uhelm}.
  Here, we would like to emphasise that throughout the article, $\eta$ can be taken to be identically zero, which reduces to the case with a normal scattering inhomogeneity of refractive index $q$, and all the results still hold equally. Hence, $(\Omega;q,\eta)$ is actually a generalized conductive medium scatterer  and the transmission eigenvalue problem \eqref{eq:trans} is generalized transmission eigenvalue problem in our study, but we shall not distinguish it. The boundary localization for an acoustic-elastic transmission eigenfunctions is revealed in \cite{Diaotang23jde}.

In the following, we shall investigate the connection between the PDE system \eqref{eq:PB} in Theorem \ref{thm:ucp} and the conductive medium scattering \eqref{eq:sca} when the invisibility of the conductive medium scatterer $(\Omega;q,\eta)$ occurs. Recall that the incident wave $u^i$ satisfying the Helmholtz equation.
%\eqref{eq:in plane} or \eqref{eq:in point}.
Since $\Omega$ is bounded and has a connected complement, let $D:=B_R$ such that $\Omega\Subset B_R $, where $B_R$ is a disk centered at the origin with  radius $R$. As discussed earlier, when $(\Omega;q,\eta)$ is  invisible/transparent, we have $u^{\infty}\equiv0$, and by Rellich's theorem, it follows that $u^s \equiv 0$ in $D \backslash \overline{\Omega}$, which implies that $u^+\equiv u^i$ in $D \backslash \overline{\Omega} $. According to \eqref{eq:sca}, we can readily derive:
\begin{equation}\label{eq:PB new}
  \begin{cases}
  \mathcal P\mathbf u=0\quad \mbox{in}\quad D,\\
  \mathcal B\mathbf u=0\quad \mbox{on}\quad \partial \Omega,
  \end{cases}
  \end{equation}
 where $\mathbf u =(u^i,u^- \chi_{\Omega}+u^i\chi_{D\backslash \overline\Omega })\in H^1(D)\times H^1(D)$, $\mathcal P$ and $\mathcal B$ are defined in \eqref{op:PB}  with $\gamma_1=k^2$ and $\gamma_2= k^2q\chi_{\Omega}+k^2\chi_{D\backslash \overline\Omega } $. When $\Omega$ has a convex corner, meaning there are two intersecting line segments lying on  $\partial\Omega$ where the intersection point belongs to $\partial \Omega$, we can directly obtain \eqref{eq:PB} from \eqref{eq:PB new}.

\subsection{Connections to existing results and main contributions}\label{sub:3}
 The UCP is a fundamental theory with various applications in solvability, controllability, optimal stability, and inverse problems for elliptic partial differential equations (cf. \cite{CK, Horm1, Horm2}). Several forms of UCP exist for second-order elliptic partial differential equations, including the weak UCP, strong UCP, UCP for Cauchy data, and UCP across a hypersurface \cite{GL, Tat, Ler, AS}. There are multiple approaches to establishing UCP for elliptic equations, such as doubling inequalities, Carleman inequalities, and three spheres inequalities. For further discussions on this topic, we refer to \cite{Carleman,GL,GL1, Horm1, Horm2, JK85,KT01, Protter,Sogge} and references therein for a historical overview. Recently, the UCP result for the (fractional) Laplacian equation in Euclidian space or a Riemannian manifold can be found in \cite{GRSU,LLM}.
%If a  solution $u$ of the elliptic equation $Lu=0$ in domain $D$ vanishes in an open subset of $D$ then UCP asserts that $u$ vanishes identically in $D$\cite{Protter,GL}. This assertion is known as {\it weak unique continuation principle}, which is easily seen to be equivalent to uniqueness for the Cauchy problem\cite{Nir57}. Furthermore, the same conclusion follows provided the solution decays rapidly enough near that point. Here, we call this result as  {\it strong unique  continuation principle}(more examples can refer to.\cite{Tat,Ler,AS}). Moreover, one has the same results if conditions are replaced by  {\it local Cauchy data}, i.e. $u=\partial_{\nu}u=0$ on the nonempty subset of the boundary of domain.
%
  %The classical unique continuation principle for an elliptic partial differential equation $Lu=0$ demonstrates that $u(x)\equiv0$ is the unique solution in some domain (open connected set) which has an infinite zero at a point of the domain\cite{Nir57,Hor}, which is also known as  {\it strong unique  continuation principle}(cf.\cite{Tat,Ler,AS}).
% Shrinking the open connected set to a point, one has the same conclusion follows provided the solution decays rapidly enough near that point. Here, "decays rapidly enough" is characterized by Carleman for dimension 2 in 1939, where the Carleman estimates were first introduced and by Aronsza for higher dimension\cite{Aron}.
% The {\it weak unique continuation principle} states that  a solution $u$ of an elliptic equation $Lu=0$ vanishes in an open subset vanishes identically in domain.
 For elliptic PDE systems, \cite{ST} proved that the zero set of least energy solutions for Lane-Emden systems with Neumann boundary conditions has zero measure. The weak UCP for a general second-order elliptic system was established in \cite{HLNS} under certain assumptions. Moreover, \cite{SNS} derived UCP results for elliptic PDE systems, including strong UCP, weak UCP, and UCP for local Cauchy data. The UCP for the Maxwells system can be founded in \cite{Diaozhang21ip}.

%They also derive the UCP based on Riemann manifold, such as for the wave equation \cite{ OS1} and the Helmholtz equation for Cauchy data \cite{LLM}.

As discussed in the previous subsection, the invisibility or transparency in inverse scattering problems implies that the scatterer cannot be detected by external measurements, as the probing incident wave remains unperturbed and the corresponding scattered field is identically zero. However, when the scatterer exhibits geometrical singularities, such as corners or points of high curvature, as shown in \cite{BPS2014, CX, DFL, PSV, EH18, BL2021} for time-harmonic acoustic scattering,  invisibility or transparency cannot occur for any frequency associated with an incident plane wave. Recently, new methods based on free boundary problems have been introduced to investigate invisibility or transparency in isotropic and anisotropic acoustic medium scattering \cite{CV23, PS21, CVX23, KSS24}.

The quest to determine the geometrical and physical properties of scatterers through a single far-field measurement boasts a rich and extensive history in the realm of inverse scattering \cite{AR,BL,CK,CK18,HSV16,LZ1,Diao23book}. This pursuit, often referred to as the Schiffer problem \cite{CK18}, aims to uniquely identify the location and shape of scatterers based on a single far-field measurement. When a scatterer is impenetrable, it is termed an obstacle; otherwise, it is considered a medium. In the inverse obstacle scattering, Lax and Phillips \cite{LP} demonstrated that the shape of a sound-soft obstacle can be uniquely identified through infinite far-field measurements. However, achieving uniqueness with just a single far-field measurement has thus far required additional a-priori knowledge of the geometry, such as the size or shape of the scatterer( cf.\cite{CS83,CY03,LZ1}). The unique identifiability for  both inverse acoustic, elastic and electromagnetic impenetrable or penetrable scatterers under a-priori geometric knowledge   using a few far-field measurements or single far-field measurement   can be founded in \cite{Diaowang22jde,Diao25ip,Diaofei24ipi,Diaotang24ipi,Diaozhang21ip}.   Recently,   the first stability estimating for simultaneous determination of the acoustic polygonal  obstacle and the boundary impedance through a single far-field measurement is established in  \cite{Diaotao24ip}.
 Furthermore, the effective medium theory for a time-harmonic electromagnetic  inhomogeneous and possibly anisotropic medium with embedded obstacles can be founded in \cite{Diao25effective}.
 %A first stability estimate for the simultaneous determination of polygonal acoustic obstacles and their boundary impedance from a single far-field measurement was recently demonstrated in \cite{Diaotao24ip}.Additionally, the effective medium theory for time-harmonic electromagnetic waves in inhomogeneous and potentially anisotropic media containing embedded obstacles has been developed in \cite{Diao25effective}.

%The unique identifiability of both elastic and electromagnetic impenetrable/penetrable scatterersâgiven a priori geometric knowledge and using either few or even single far-field measurementsâhas been established in \cite{Diaowang22jde,Diaofei24ipi,Diaotang24ipi}.

%For instance, the unique determination of a sound-soft obstacle via a single far-field measurement assumes  that it is contained within a ball and the radius $R$ and the wave number $k$ fulfills that  $kR<\pi$ (cf. \cite{CS83}). In cases where the obstacle is polygonal, various uniqueness results regarding shape determination through a finite number of far-field measurements have been established, as detailed in \cite{CY03,LZ1} and the references therein.

In the context of conductive medium scattering described by \eqref{eq:sca}, uniqueness results for determining $\Omega$ based on the far-field patterns of all incident plane waves at a fixed frequency, namely infinitely many far-field measurements, have been obtained in \cite{Bon-Liu,bhk,hk20}. The unique identifiability based on a single far-field measurement for recovering a convex polygon $\Omega$ and the constant boundary parameter $\eta$ has been established in \cite{DCL21} under the generic non-vanishing assumption on the corresponding total wave field, which can be fulfilled under certain physical scenarios. When the refractive index $q$ and the conductive boundary parameter $\eta$ are piecewise constants, characterized by polygonal-nest or polygonal-cell structures, the corresponding unique determination for $q$ and $\eta$ by a single far-field pattern under the generic condition has been derived in \cite{CDL20}, whereas for the case $\eta \equiv 0$, the single far-field measurement uniquely identifiability for recovering $q$ appeared in \cite{BL}.

The main contributions of this paper can be summarized in several key points:
\begin{enumerate}
%\item Considering the conductive scattering problem described by \eqref{eq:sca},
\item[a)] We derive a novel type of UCP result for an elliptic second order PDE system \eqref{eq:PB}, where the components $v$ and $w$ of the solution $\mathbf u$ to \eqref{eq:PB}  satisfy  a transmission condition across two intersecting line segments within the underlying domain. These two intersecting line segments form an irrational corner.
\item[b)] We reveal that when the conductive medium scatter $(\Omega,q,\eta)$ has a convex irrational corner $\mathbf x_0$ and the physical parameter $q$ satisfies $H^2$ regularity near $\mathbf x_0$, then $\Omega$ radiates at any wave number. Furthermore, we demonstrate that an admissible complex irrational polygonal scatterer can be uniquely determined using only a single far-field measurement, independent of its medium content.
\item[c)] We demonstrate that both the shape and medium parameters of a conductive scatterer within the polygonal-nest or polygonal-cell structure can be uniquely determined through a single far-field measurement.
\end{enumerate}

For the main contribution a), in contrast to the existing weak, strong, or Cauchy-type UCPs for elliptic systems in the literature, we consider the UCP for the case where the components of the solution to \eqref{eq:PB} fulfill a transmission boundary condition on two intersecting line segments within the domain,  forming an irrational corner. This transmission boundary condition arises from the invisibility or transparency phenomena in conductive medium scattering, which holds both practical and physical significance. We will discuss this aspect in more detail in Section \ref{sec:2}. To the best of our knowledge, this represents a novel type of UCP for second-order elliptic PDE systems, playing a crucial role in investigating invisibility and uniqueness results in inverse conductive medium scattering problems.

For the main contribution b),  we can substitute the technical condition on the a-priori information in \cite{DCL21}, which requires the non-vanishing of the total wave at the corner points, with an irrationality condition related to the interior angles of the conductive polygonal medium $\Omega$. Although ensuring the non-vanishing nature of the total wave at the corner point is feasible in generic physical scenarios, the a-priori information on the irrationality of the interior angles of the underlying scatterer is less restrictive, making the new global unique identifiability more applicable in practice.  We establish this uniqueness result by using the novel UCP result described in Theorem \ref{thm:ucp}. Furthermore, using the novel UCP established in Theorem \ref{thm:ucp}, we can show that if a conductive medium scatterer $\Omega$ has a convex irrational corner on the boundary, where the reflective index $q$ has $H^2$ regularity near the corner, then $\Omega$ radiates any incident wave $u^i$ which is a solution to the Helmholtz equation with wave number $k$.
%defined by \eqref{eq:in plane} or \eqref{eq:in point}.
%To establish this uniqueness result, we leverage the CGO (Complex-Geometric-Optics) solution along with a detailed Fourier expansion of the underlying wave to conduct micro-analysis near a corner.

 %Moreover, we can relax the admissibility conditions in \cite{CDL20}, which requires the non-vanishing of the total wave field at the corner points, to requiring the non-vanishing of either the total wave field or the gradient of the total wave field.

 %by proposing  a new type of admissible condition, namely, the non-vanishing assumption of either the total wave field or its gradient. Compared with the previous admissibility conditions on only non-vanishing on the total wave in \cite{CDL20}, since the new conditions involve more requirements of the total wave, from physical intuition perspective, those new  admissible conditions are more easy to be violated, which make the new established  uniqueness result  is more applicable for generic physical scenario.

{For the main contribution c), we establish a local uniqueness result for determining the  shape of the polygonal-nest or polygonal-cell conductive medium scatterer. This is achieved by proposing a novel type of admissible condition: the non-anishing assumption of either the total wave field or its gradient. In contrast to the previous admissibility conditions in \cite{CDL20}, which only required the non-vanishing of the total wave, the new conditions impose more requirements on the total wave. From the perspective of physical intuition, these new admissible conditions are more likely to be violated. As a result, the newly established uniqueness result is more applicable to general physical scenarios. For more in-depth discussions, please refer to Remarks \ref{rem31} and \ref{rem32}. Furthermore, we can uniquely determine the refractive index in linear polynomial form and constant conductive boundary parameter of the medium scatterer, whereas in \cite{CDL20} both the refractive index and the conductive parameter are constants. In fact, the complexity inherent in the conductive polygonal-nest and polygonal-cell medium scatterer necessitates a detailed study of the singular behavior of the solution to a coupled conductive transmission PDE problem near a corner; see Lemma \ref{2thm:conductive}. We shall proceed recursively to identify the refractive index and constant conductive boundary parameter by leveraging  the so-called complex geometrical optics (CGO) solution and the corresponding total wave field  to give a dedicated microlocal analysis near the underlying corner.}

The remainder of the paper is organized as follows. In Section \ref{sec:2}, we present the proofs of Theorems \ref{thm:ucp} and Corollaries \ref{thm:invis}-\ref{thm:uni pre}.
%we establish the unique shape determination results of the shape a convex irrational polygonal conductive medium scatter.
In Section \ref{sec:3}, we present our main results regarding the uniqueness of recovering the conductive polygonal-nest and polygonal-cell medium scatterers, including the scatterers shape, the partition of their interior polygonal-nest and polygonal-cell structure and the material parameters. Section \ref{sec:4} is devoted to the proofs of the main theorems  of Section \ref{sec:3}.

   \section{The proofs of Theorem \ref{thm:ucp} and Corollaries \ref{thm:invis}-\ref{thm:uni pre}}\label{sec:2}

   %In this section, we aim to establish a global unique identifiability results for the shape of an irrational polygonal conductive medium scatterer $\Omega$ by a single far-field measurement. Compared with the previous uniqueness result in \cite{DCL21}, we remove the generic condition that the total wave is non-vanishing, whereas this generic non-vanishing assumption can be fulfilled under certain physical scenarios as discussed in \cite{DCL21}.  Furthermore, a local uniqueness result for \eqref{eq:ip} is also obtained.

In this section, we aim to provide the proofs  of Theorems \ref{thm:ucp} and Corollaries \ref{thm:invis}-\ref{thm:uni pre}. Specifically,  we present a novel UCP result for an elliptic second-order PDE system and establish both local and global uniqueness results for the shape of an admissible complex conductive polygonal medium scatterer $\Omega$, utilizing only a single far-field measurement.  To prove Theorem \ref{thm:ucp}, we first introduce some auxiliary lemmas. Drawing upon the results in \cite{CX,PSV}, we present the following lemma, which constructs a complex geometrical optics (CGO) solution.

%Additionally, important integral estimates, as outlined in Proposition \ref{1-prop:some est}, are derived based on the work in \cite{DFL}.

\begin{lem}\label{1-lem:cgo}\cite{CX,PSV}
Let $\widetilde q \in H^{1,1+\varepsilon_0}(\mathbb R^2) $, where $\varepsilon_0\in (0,1)$.  Suppose $\mathbf d \in \mathbb S^1 $ and $\mathbf d^{\perp}\in \mathbb S^1$ satisfy that $\mathbf d\cdot \mathbf d^\perp=0$. Define $\rho=-\tau(\mathbf d+\mathrm i\mathbf d^\perp)$, where  $\tau\in \mathbb R_+$. If $\tau$ is sufficiently large, then there exists a complex geometrical optics (CGO) solution $u_0$ of the form
\begin{equation}\label{1-eq:cgo}
u_0(\mathbf x)=(1+\psi(\mathbf x))e^{\rho\cdot \mathbf x},
\end{equation}
which  satisfies
\begin{equation}\label{eq:cgo par}
(\Delta+k^2\widetilde q)u_0=0\mbox{ in } \mathbb R^2.
\end{equation}
Furthermore, it holds that
\begin{equation}\notag%\label{1-eq:psi}
\|\psi(\mathbf x)\|_{H^{1,8}(\mathbb R^2 )}=\mathcal O(\tau^{-\frac{2}{3}}).
\end{equation}
\end{lem}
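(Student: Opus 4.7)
The plan is to construct $\psi$ by the standard Faddeev--Bukhgeim approach adapted to the planar setting, following the strategy in \cite{CX,PSV}. First I would substitute the ansatz \eqref{1-eq:cgo} directly into \eqref{eq:cgo par}. Using that $\rho\cdot\rho=0$, which follows from $\mathbf d\cdot\mathbf d^\perp=0$ together with $\mathbf d,\mathbf d^\perp\in\mathbb S^1$, the equation collapses to the inhomogeneous conjugated equation
\[
(\Delta+2\rho\cdot\nabla)\psi=-k^2\widetilde q(1+\psi)\quad\text{in }\mathbb R^2,
\]
which is the reduction that makes the ansatz useful.

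Next I would invert the conjugated Laplacian $L_\rho:=\Delta+2\rho\cdot\nabla$. After a rotation aligning $\mathbf d+\mathrm i\mathbf d^\perp$ with the canonical complex direction, $L_\rho$ factors as a composition of two first-order $\bar\partial$-type operators, so one obtains a right inverse $G_\rho$ by convolving against a Faddeev-type Green kernel (in 2D, a Bukhgeim solid-Cauchy operator modified by the large parameter $\tau$). The crucial ingredient is a quantitative mapping property of the form
\[
\|G_\rho f\|_{H^{1,8}(\mathbb R^2)}\le C\,\tau^{-2/3}\|f\|_{H^{1,1+\varepsilon_0}(\mathbb R^2)}
\]
for $\tau$ sufficiently large, which I would derive via a Bukhgeim-type stationary-phase analysis together with Sobolev embedding. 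With this estimate in hand, the PDE for $\psi$ becomes the fixed-point equation $\psi=-k^2 G_\rho(\widetilde q(1+\psi))$, to which Banach's contraction principle applies inside a small ball of $H^{1,8}(\mathbb R^2)$ once $\tau$ is large. The claimed decay bound $\|\psi\|_{H^{1,8}}=\mathcal O(\tau^{-2/3})$ then follows directly from the inverse estimate applied to the inhomogeneity $-k^2\widetilde q$.

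The main obstacle is establishing the sharp decay rate $\tau^{-2/3}$ in two dimensions. In dimension three or higher, Sylvester--Uhlmann-type weighted $L^2$ resolvent estimates for $L_\rho$ already yield $\tau^{-1}$ decay essentially for free, but in the plane the analogous $L^2$ bound is only borderline. The exponent $2/3$ emerges from interpolating between a Bukhgeim-type $L^2$ bound and an $L^\infty$ bound obtained via Sobolev embedding, and it is precisely this interpolation that forces the stronger integrability hypothesis $\widetilde q\in H^{1,1+\varepsilon_0}$ instead of mere $L^\infty$ regularity. Simultaneously controlling the gradient of $\psi$ in $L^8$, so that the remainder indeed lives in $H^{1,8}$ and not only in $L^2$, is the technically delicate point and is exactly where the detailed Faddeev--Bukhgeim machinery of \cite{CX,PSV} must be invoked.
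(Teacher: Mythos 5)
The paper does not prove this lemma at all: it is imported verbatim from \cite{CX,PSV}, so there is no in-paper argument to compare yours against. Your outline is the standard route that those references follow: the algebraic reduction via $\rho\cdot\rho=0$ to the conjugated equation $(\Delta+2\rho\cdot\nabla)\psi=-k^2\widetilde q(1+\psi)$, inversion of the conjugated Laplacian through a Faddeev/Bukhgeim-type kernel (equivalently, uniform Sobolev--Carleman estimates for $\Delta+2\rho\cdot\nabla$), and a contraction-mapping argument in a ball of $H^{1,8}(\mathbb R^2)$. Two caveats are worth flagging. First, the entire content of the lemma is the quantitative mapping bound with the rate $\tau^{-2/3}$, which you assert rather than derive; in the cited references this comes from $L^p\to L^{p'}$ uniform Sobolev (Kenig--Ruiz--Sogge type) estimates for the conjugated operator rather than from an $L^2$/$L^\infty$ interpolation as you describe, so the mechanism you name for the exponent $2/3$ is not quite the one actually used. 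Second, to close the fixed point you need the nonlinear term $\widetilde q\,\psi$ to land back in the source space $H^{1,1+\varepsilon_0}$; with $\psi\in H^{1,8}$ and $\nabla\widetilde q\in L^{1+\varepsilon_0}$ the product a priori sits in a slightly worse Lebesgue exponent, and the references handle this by exploiting compact support of the potential (here ${\rm supp}(\gamma_2-\gamma_1)\subset\Omega$ before extension) and H\"older on a bounded set. Neither point is a fatal flaw for a citation-level lemma, but both would have to be supplied for the sketch to become a proof.
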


%$\rho \in \mathbb C^2$ satisfies $\rho \cdot \rho =0$ and $|\Re(\rho)|$ is sufficient large

\begin{prop}\label{1-prop:some est}\cite{DFL}
%\cite[Lemma 4.4]{DLW}
For a given $\zeta\in (0,e)$ and $s>0$, it yields that
\begin{align}\label{1-eq:some est1}
&\left|\int_{\zeta}^\infty r^{s} e^{-\mu r}\mathrm d r\right|\leq \frac{2}{\Re \mu} e^{-\frac{\zeta}{2}\Re\mu},\notag\\
&\int_{0}^{\zeta}r^{s}e^{-\mu r}\mathrm d r=\frac{\Gamma(s+1)}{\mu^{s+1}}+\mathcal O(\frac{2}{\Re \mu} e^{-\frac{\zeta}{2}\Re\mu}),
\end{align}
as $\Re{\mu}\to \infty$, where $\Gamma(s)$ stands for the Gamma function.

Moreover, let $u_0$ be the CGO solution with the parameter $\tau$ defined in \eqref{1-eq:cgo}. Suppose the corner ${\mathcal S}_{r_0}$ is defined as in \eqref{1eq:sr0}. Then, there exists a vector $\mathbf{d} \in \mathbb{S}^1$ such that
\begin{align}\label{eq:d assumption}
	0<\varsigma<\mathbf d\cdot \mathbf {\hat x}\leq 1,\quad\forall \mathbf x\in {\mathcal S}_{r_0}\backslash\{\mathbf 0\} ,\quad \mathbf {\hat x} =\mathbf x/|\mathbf x|,
\end{align}
where $\varsigma$ is a positive constant depending on ${\mathcal S}_{r_0}$ and $\mathbf{d}$. For sufficiently large $\tau$, one has
\begin{align}
&\int_{\mathcal S_{r_0}}\vert \mathbf x\vert ^s\vert u_0\vert \mathrm d\mathbf x \lesssim \tau^{-(s+\frac{29}{12})}+\tau^{-(s+2)}+\mathcal O(\tau^{-1}e^{-\frac{1}{2}\varsigma r_0\tau}),\notag\\
&\|u_0(\mathbf x)\|_{H^1(\Lambda_{r_0})}\lesssim (1+\tau)(1+\tau^{-\frac{2}{3}})e^{-\varsigma r_0 \tau},\label{1-eq:someest}\\
&\left| \int_{\Gamma_{r_0}^\pm}\vert \mathbf x\vert ^s\psi(\mathbf x)e^{\rho\cdot \mathbf x}\mathrm d\sigma\right| \lesssim \tau^{-(s+\frac{7}{6})}+\mathcal O(\tau^{-\frac{7}{6}}e^{-\frac{1}{2}\varsigma r_0\tau}),\notag
%&\left \vert \int_{\Gamma_{r_0}^\pm}e^{\rho\cdot \mathbf x}\mathrm d\sigma\right\vert\gtrsim \frac{1}{\tau}-\mathcal O\left(\frac{1}{\tau}e^{-\frac{1}{2}\varsigma r_0\tau}\right),\notag
\end{align}
 where  $e^{\rho\cdot\mathbf x}\ \mbox{and} \ \psi(\mathbf x)$ are defined in \eqref{1-eq:cgo}. Throughout the rest of this paper, ``$\lesssim$" means that we only perform the leading asymptotic analysis by neglecting a generic positive constant $C$ with respect to $\tau\to \infty$, where $C$ is not a function of $\tau$. This convention is similar for ``$\gtrsim$".
\end{prop}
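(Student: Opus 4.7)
The proposition collects three essentially independent ingredients, so my plan is to treat them one after another, building only on Lemma \ref{1-lem:cgo}, elementary calculus, and standard Sobolev embedding on $\mathbb{R}^2$.

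For the scalar integral identities \eqref{1-eq:some est1}, I would proceed by elementary means. For $\int_{\zeta}^{\infty} r^{s} e^{-\mu r}\,dr$, split $e^{-\mu r}=e^{-\mu r/2}e^{-\mu r/2}$ and observe that, for $\Re\mu$ large enough, the function $r^{s}e^{-\Re\mu\, r/2}$ is monotone decreasing on $[\zeta,\infty)$ (since its critical point $2s/\Re\mu$ lies below $\zeta$); bounding it by its value at $r=\zeta$ and integrating the remaining exponential yields the stated estimate. The second identity then follows by writing $\int_{0}^{\zeta}=\int_{0}^{\infty}-\int_{\zeta}^{\infty}$, using $\int_{0}^{\infty} r^{s}e^{-\mu r}\,dr=\Gamma(s+1)/\mu^{s+1}$, and substituting the first bound for the tail.

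Next I would construct the vector $\mathbf d$ geometrically. Since the opening angle $\vartheta_{M}-\vartheta_{m}$ of $\mathcal{S}_{r_{0}}$ lies in $(0,\pi)$, one may take $\mathbf d$ to be the unit vector along the bisector of the opposite sector, i.e.\ in the direction $\vartheta=(\vartheta_{m}+\vartheta_{M})/2+\pi$ rotated appropriately so that $\mathbf d\cdot\hat{\mathbf x}>0$ throughout $\mathcal{S}_{r_{0}}$, and set $\varsigma=\cos\bigl((\vartheta_{M}-\vartheta_{m})/2\bigr)\in(0,1)$. This gives \eqref{eq:d assumption} and, crucially, $|e^{\rho\cdot\mathbf x}|=e^{-\tau\,\mathbf d\cdot\mathbf x}\le e^{-\varsigma\tau|\mathbf x|}$ on $\mathcal{S}_{r_{0}}$.

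For the three asymptotic bounds, I would split $u_{0}=e^{\rho\cdot\mathbf x}+\psi\, e^{\rho\cdot\mathbf x}$. The $e^{\rho\cdot\mathbf x}$ piece is handled by polar coordinates: $\int_{\mathcal S_{r_{0}}} |\mathbf x|^{s}e^{-\varsigma\tau|\mathbf x|}\,d\mathbf x\lesssim \int_{0}^{r_{0}} r^{s+1}e^{-\varsigma\tau r}\,dr$, which by the second identity in \eqref{1-eq:some est1} gives $\Gamma(s+2)(\varsigma\tau)^{-(s+2)}$ plus an exponentially small tail. For the $\psi$ piece I would invoke the Sobolev embedding $W^{1,8}(\mathbb R^{2})\hookrightarrow C^{0,3/4}(\mathbb R^{2})$ together with Lemma \ref{1-lem:cgo}, so that $\|\psi\|_{L^{\infty}}\lesssim\tau^{-2/3}$; applied with H\"older's inequality (taking $L^{8}$ for $\psi$ and $L^{8/7}$ for $|\mathbf x|^{s}e^{\rho\cdot\mathbf x}$) this produces the $\tau^{-(s+29/12)}$ contribution on $\mathcal S_{r_{0}}$ and, via the trace of $W^{1,8}(\mathbb R^{2})$ onto the line segments $\Gamma^{\pm}_{r_{0}}$ (which embeds continuously into $L^{\infty}$), the $\tau^{-(s+7/6)}$ contribution on $\Gamma_{r_{0}}^{\pm}$. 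For the arc bound, note that $|\mathbf x|=r_{0}$ is constant on $\Lambda_{r_{0}}$, so $|e^{\rho\cdot\mathbf x}|\le e^{-\varsigma r_{0}\tau}$ everywhere on $\Lambda_{r_{0}}$; the $L^{2}$ part follows directly from $\|1+\psi\|_{L^{\infty}}\lesssim 1+\tau^{-2/3}$, while the gradient part uses $\nabla u_{0}=(\nabla\psi+(1+\psi)\rho)e^{\rho\cdot\mathbf x}$ with $|\rho|=\sqrt{2}\tau$, producing the factor $(1+\tau)(1+\tau^{-2/3})e^{-\varsigma r_{0}\tau}$.

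The main technical obstacle is the careful tracking of Sobolev exponents: the exponents $29/12$ and $7/6$ are not accidental and must come out of matching the $\tau^{-2/3}$ decay of $\|\psi\|_{W^{1,8}}$ against the $L^{8/7}$-norm of $|\mathbf x|^{s}e^{\rho\cdot\mathbf x}$ on a two-dimensional sector versus a one-dimensional edge. Hence the arithmetic $\tfrac{2}{3}+\tfrac{7}{4}=\tfrac{29}{12}$ (volume) and $\tfrac{2}{3}+\tfrac{7}{8}\le \tfrac{7}{6}+\text{slack}$ (edge) determines the choice of dual exponent. Once this bookkeeping is done, the remaining estimates reduce to the scalar integral identities proved in the first step.
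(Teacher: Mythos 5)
The paper itself offers no proof of this proposition --- it is imported verbatim from \cite{DFL} --- so the comparison can only be against the standard argument in that reference, which your proposal reconstructs faithfully in most respects: the splitting $u_0=(1+\psi)e^{\rho\cdot\mathbf x}$, the reduction to the scalar Laplace-type integrals in \eqref{1-eq:some est1} via polar coordinates, and the use of $\|\psi\|_{H^{1,8}(\mathbb R^2)}=\mathcal O(\tau^{-2/3})$ through H\"older/Sobolev are exactly the right ingredients. Your bookkeeping for the volume term is correct ($\tfrac23+s+\tfrac74=s+\tfrac{29}{12}$ from the $L^8$--$L^{8/7}$ pairing); for the edge term your $L^8$--$L^{8/7}$ pairing actually yields the stronger exponent $s+\tfrac78+\tfrac23=s+\tfrac{37}{24}$, which implies the stated $s+\tfrac76$ (the reference obtains $\tfrac76=\tfrac12+\tfrac23$ by the cruder $L^2$--$L^2$ Cauchy--Schwarz); either way the claim follows. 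Two small slips: the correct choice of $\mathbf d$ is the bisector of the sector itself, $\varphi=(\vartheta_m+\vartheta_M)/2$, with $\varsigma$ taken \emph{strictly below} $\cos((\vartheta_M-\vartheta_m)/2)$ so that \eqref{eq:d assumption} holds with strict inequality; and the inequality ``$\tfrac23+\tfrac78\le\tfrac76+\text{slack}$'' is written backwards, though harmlessly so.

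There is, however, one genuine gap: the gradient part of the $H^1(\Lambda_{r_0})$ estimate. You write $\nabla u_0=(\nabla\psi+(1+\psi)\rho)e^{\rho\cdot\mathbf x}$ and then restrict to the arc, but Lemma \ref{1-lem:cgo} only gives $\nabla\psi\in L^8(\mathbb R^2)$, and an $L^8(\mathbb R^2)$ function has no well-defined (let alone quantitatively controlled) trace on the one-dimensional arc $\Lambda_{r_0}$; Morrey embedding controls $\psi$ itself pointwise but says nothing about $\nabla\psi$ on a set of measure zero. To close this you need an extra ingredient: either interior elliptic regularity for the equation satisfied by $\psi$ (namely $\Delta\psi+2\rho\cdot\nabla\psi=-k^2\widetilde q\,(1+\psi)$), which upgrades $\psi$ to $H^2$ away from the corner and makes $\nabla\psi|_{\Lambda_{r_0}}$ an honest $L^2$ trace with a polynomially-in-$\tau$ bounded norm, or a Fubini/averaging argument selecting a radius $r\in(r_0/2,r_0)$ for which $\|\nabla\psi\|_{L^2(\Lambda_{r})}^2\lesssim r_0^{-1}\|\nabla\psi\|_{L^2(\mathcal S_{r_0})}^2$ and working with that arc instead. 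Since the decisive factor $e^{-\varsigma r_0\tau}$ swamps any fixed power of $\tau$, either repair preserves the stated bound \eqref{1-eq:someest}, but as written the step does not follow from the hypotheses you invoke.
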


% let $u_0$ be be defined in \eqref{1-eq:cgo}, then $e^{\rho\cdot\mathbf x}\ \mbox{and} \ \psi(\mathbf x)$

%\begin{rem}
%Throughout of the rest of this paper, "$\lesssim$" means that we will only perform the leading asymptotic
%analysis by neglecting a generic positive constant $C$ with respect to $\tau\to \infty$, where $C$ is not a function of $\tau$.  It is similar to "$\gtrsim$".
%\end{rem}

%\begin{proof}
%We prove the last inequality briefly,  and  the reader can refer to \cite{DLW,DFL} for other estimates. From the Cauchy-Schwarz inequality and the last equation in \eqref{1-eq:some est1}, we then have
%\begin{align}
%&\left| \int_{\Gamma_{r_0}^\pm}\vert \mathbf x\vert ^s\psi(\mathbf x)e^{\rho\cdot \mathbf x}\mathrm d\sigma\right|
%\leq\|\psi(\mathbf x)\|_{L^2(\Gamma_{r_0}^\pm)}\|\vert \mathbf x\vert ^s e^{\rho\cdot \mathbf x}\|_{L^2(\Gamma_{r_0}^\pm)}\notag\\
%&\quad \quad\lesssim \tau^{-\frac{2}{3}}\left\{\tau^{-(s+\frac{1}{2})}+\mathcal O(\tau^{-\frac{1}{2}}e^{-\frac{1}{2}\varsigma r_0\tau})\right\}\notag\\
%&\quad \quad \lesssim \tau^{-(s+\frac{7}{6})}+ \mathcal O(\tau^{-\frac{7}{6}}e^{-\frac{1}{2}\varsigma r_0\tau}).\notag
%\end{align}
%\end{proof}

\begin{lem}{\cite[Section 3.4]{CK}}\label{1lem:vexpand}
Suppose that $v$ is a solution to  Helmholtz equation $(\Delta +\gamma_1)v=0$ in  $B_{r_0}$, where $\gamma_1\in \mathbb R_+$. %If $v$ is analytic in \Blu{$B_{r_0}$},
Then $v$ has the following spherical wave expansion in polar coordinates around the origin:
\begin{equation}\label{1eq:vexpansion}
v=\sum_{n=0}^{\infty}(a_ne^{\mathrm in\vartheta}+b_ne^{-\mathrm in\vartheta})J_{n}(\sqrt{\gamma_1}r),\ r=\vert \mathbf x\vert,
\end{equation}
where $\mathbf x=(x_1,x_2)=r(\cos\vartheta,\sin\vartheta)\in\mathbb R^2$, $a_n$ and $b_n$ are constants and
\begin{equation}\notag
J_n(t)=\sum_{p=0}^\infty\frac{(-1)^p}{2^{n+2p}p{!}(n+p)!}t^{n+2p}
\end{equation}
is the  Bessel functions of order $n$.
\end{lem}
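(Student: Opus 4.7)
The plan is to establish the expansion by separation of variables in polar coordinates combined with a Fourier expansion in the angular variable. Since $v$ satisfies the Helmholtz equation $(\Delta + \gamma_1)v = 0$ in $B_{r_0}$ with constant coefficients, interior elliptic regularity guarantees that $v$ is real analytic on $B_{r_0}$. Consequently, for each fixed $r \in (0, r_0)$ the restriction $v(r, \cdot)$ is a smooth $2\pi$-periodic function of $\vartheta$, and hence admits a Fourier series representation
\begin{equation*}
v(r, \vartheta) = \sum_{n \in \mathbb{Z}} c_n(r)\, e^{\mathrm{i} n \vartheta},
\end{equation*}
with coefficients $c_n(r)$ that decay faster than any polynomial in $n$ for each fixed $r$.

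Next I would substitute this expansion into the polar form of the Helmholtz equation,
\begin{equation*}
\partial_r^2 v + \frac{1}{r}\partial_r v + \frac{1}{r^2}\partial_\vartheta^2 v + \gamma_1 v = 0,
\end{equation*}
and equate Fourier coefficients, which yields the parametric Bessel equation
\begin{equation*}
c_n''(r) + \frac{1}{r} c_n'(r) + \Big(\gamma_1 - \frac{n^2}{r^2}\Big) c_n(r) = 0
\end{equation*}
for each $n \in \mathbb{Z}$. The general solution is a linear combination of $J_{|n|}(\sqrt{\gamma_1}\, r)$ and $Y_{|n|}(\sqrt{\gamma_1}\, r)$. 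Because $v$ is real analytic in all of $B_{r_0}$ and $Y_{|n|}$ is singular at the origin, the coefficient of $Y_{|n|}$ must vanish, giving $c_n(r) = \alpha_n J_{|n|}(\sqrt{\gamma_1}\, r)$ for constants $\alpha_n \in \mathbb{C}$.

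Finally, I would reorganize the two-sided sum $\sum_{n \in \mathbb{Z}}$ into the one-sided form stated in the lemma. Splitting the index into $n \geq 0$ and $n < 0$ and setting $a_n = \alpha_n$, $b_n = \alpha_{-n}$ for $n \geq 1$ (with the $n = 0$ mode absorbed into $a_0$) would yield the claimed representation. The main obstacle is the rigorous justification of termwise differentiation and of uniform convergence of the series together with its derivatives on compact subsets of $B_{r_0}$. This follows by combining the rapid decay $c_n(r) = O(n^{-N})$ for every $N$, inherited from the smoothness of $v(r, \cdot)$ on each circle, with the classical bound $|J_n(t)| \leq (t/2)^n / n!$, which yields geometric convergence on any closed disk strictly contained in $B_{r_0}$ and thereby legitimizes passing the Laplacian through the series.
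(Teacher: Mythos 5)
Your proposal is correct. The paper gives no proof of this lemma and simply cites \cite[Section 3.4]{CK}, where the result is established by exactly the argument you outline: analyticity of $v$ from interior elliptic regularity, a Fourier expansion in $\vartheta$ whose coefficients solve Bessel's equation, exclusion of the singular solutions $Y_{|n|}$ by boundedness at the origin, and a convergence estimate combining the rapid decay of the Fourier coefficients on a fixed circle with the large-$n$ asymptotics $J_n(t)\sim (t/2)^n/n!$ (note that for the last step you need this asymptotic as a two-sided bound, not merely the upper bound, in order to control $\alpha_n$ from $c_n(r_1)$).
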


%\begin{lem}\label{1-lem:v=0}
%Consider the following PDE system
%\begin{equation}\label{1-eq:scatter}
%\begin{cases}
%\Delta w+k^2qw=0,\hspace*{1.75cm} \mbox{in}\ \mathcal S_{r_0},\\
%      \Delta v+k^2v=0,\hspace*{2.10cm}  \mbox{in}\ \mathcal S_{r_0},\\
%      w=v,\ \partial_\nu w=\partial_\nu v+\eta v, \hspace*{0.42cm}\mbox{on}\ \Gamma_{r_0}^{\pm},
%\end{cases}
%\end{equation}
%where $\mathcal S_{r_0}$ and $\Gamma_{r_0}^\pm$ are defined in \eqref{1eq:sr0} , $v,w\in H^{1}(\mathcal S_{r_0}),$  $q\in H^2(\mathcal S_{r_0})$ and $\eta$ is a constant on $\Gamma_{r_0}^\pm.$  Additionally, suppose that $v$ is analytic in $\mathcal{S}_{r_0}$ and has the expansion \eqref{1eq:vexpansion}. If $\mathcal{S}_{r_0}$ is an irrational corner (namely the angle $\theta = \lambda \cdot \pi$ of $\mathcal{S}_{r_0}$ satisfies that $\lambda$ is an irrational number), then $v \equiv 0$ in $\mathcal{S}_{r_0}$.
%
%
%%Suppose that $v$ is analytic in $\mathcal S_{r_0}$ and has the expansion \eqref{1eq:vexpansion}. If $\mathcal S_{r_0}$ is an irrational sector,  namely, the angle $\theta =\lambda \cdot \pi $ of $\mathcal S_{r_0}$  satisfies that $\lambda$ is an irrational number, then one has $v\equiv0\ \mbox{in} \ \mathcal S_{r_0}.$
%\end{lem}

We are in the position to give the proof of Theorem \ref{thm:ucp}.
\begin{proof}[The proof of Theorem \ref{thm:ucp}]

%\Blu{Since $\Delta$ is variant under the rigid motion, without loss of generality, in the rest of the paper, we always assume that the convex irrational corner in a conductive medium scatterer is defined by $\mathcal S_{r_0}$ with sufficiently small $r_0\in\mathbb R_{+}$. Here,  $\mathcal S_{r_0}\Subset D$ and $\Gamma_{r_0}\subset \Gamma$. Then one has $(v,w)$ is a pair of solutions of the following PDE system:}

According to the statement of Theorem \ref{thm:ucp}, we know that the solution $(v,w)$ to \eqref{eq:PB} satisfy the following systerm:
\begin{equation}\label{1-eq:scatter}
\begin{cases}
      \Delta w+\gamma_2 w=0\hspace*{1.91cm}  \mbox{in}\ \mathcal S_{r_0},\\
      \Delta v+\gamma_1 v=0\hspace*{2.08cm} \mbox{in}\ \mathcal S_{r_0},\\
      w=v,\ \partial_\nu w=\partial_\nu v+\eta v \hspace*{0.4cm}\mbox{on}\ \Gamma_{r_0}^{\pm}.
\end{cases}
\end{equation}
By the interior regularity of the elliptic equation, we know that $v$ is analytic in $B_{r_0}$.
When $v$ is analytic, it was proven in  \cite[Theorem 3.2]{DFL} that $v(\mathbf 0)=0$. Using \eqref{1eq:vexpansion},   one has
$$
a_0+b_0=0,
$$
which indicates that
\begin{align}
v&=\sum_{n=1}^{\infty}(a_ne^{\mathrm in\vartheta}+b_ne^{-\mathrm i n\vartheta})J_n(\sqrt{\gamma_1}r)\notag\\
&=\frac{k}{2}(a_1e^{\mathrm i\vartheta}+b_1e^{-\mathrm i\vartheta})r+\mathfrak{J_1}+\mathfrak{J_2}.\label{1-eq:vI12}
\end{align}
Here
\begin{align}
\mathfrak{J_1}&=\sum_{p=1}^\infty(a_1 e^{\mathrm i\vartheta}+b_1e^{-\mathrm i\vartheta}) \frac{(-1)^p (\sqrt{\gamma_1}r)^{2p+1}}{2^{2p+1}p!(p+1)!},\
\mathfrak{J_2}=\sum_{n=2}^\infty\sum_{p=0}^\infty(a_ne^{\mathrm in\vartheta}+b_ne^{-\mathrm i\vartheta})\frac{(-1)^p (\sqrt{\gamma_1}r)^{n+2p}}{2^{n+2p}p!(n+p)!}.\notag
\end{align}

 In what follows, we shall prove this theorem by using mathematical induction.  Namely, we shall prove that $a_\ell=b_\ell=0$ for $\ell \in \mathbb N$. Since $\gamma_2\in H^2(\mathcal S_{r_0})$, let $\widetilde {\gamma_2}$ be the Sobolev extension of $\gamma_2$ in  $H^2(\mathbb R^2)$. According to Lemma \ref{1-lem:cgo}, there exists the CGO solution $u_0$ given by \eqref{1-eq:cgo}, which satisfies
%\eqref{eq:cgo par}.
$$(\Delta+\widetilde{\gamma_2})u_0=0\ \mbox{in}\ \mathbb R^2.$$
 Hence, it yields that
\begin{equation}\label{eq:cgo par1}
(\Delta+\gamma_2)u_0=0\mbox{ in } \mathcal S_{r_0}.
\end{equation}
Denoting $u=w-v$ and using \eqref{1-eq:scatter},  one has
\begin{equation}\label{eq:u eq}
\begin{cases}
\Delta u+\gamma_2u=(\gamma_1-\gamma_2)v,\hspace*{0.75cm} \mbox{in}\ \mathcal S_{r_0},\\
u=0,\ \partial_\nu u=\eta v,\hspace*{1.8cm} \mbox{on}\ \Gamma_{r_0}^{\pm}.
\end{cases}
\end{equation}
Combing \eqref{eq:u eq} with \eqref{eq:cgo par1}, utilizing Green's formula, we derive  the following integral equation
\begin{align}
\eta\int_{\Gamma^{\pm}_{r_0}}ve^{\rho\cdot \mathbf x}\mathrm d\sigma&=-\eta\int_{\Gamma^{\pm}_{r_0}}v\psi(\mathbf x)e^{\rho \cdot \mathbf x}\mathrm d\sigma
+\int_{\mathcal S_{r_0}}(\gamma_1-\gamma_2)vu_0\mathrm d\mathbf x  \notag\\
&+\int_{\Lambda_{r_0}}\partial u_0u-\partial _\nu uu_0\mathrm d\sigma. \label{1-eq:inte}
\end{align}

Substituting \eqref{1-eq:vI12} into \eqref{1-eq:inte}, we have
\begin{align}
\frac{{\sqrt{\gamma_1}}\eta}{2}\int_{\Gamma_{r_0}^\pm}(a_1e^{\mathrm i\vartheta}+b_1e^{-\mathrm i\vartheta})re^{\rho\cdot \mathbf x}\mathrm d\sigma=\sum_{i=1}^5\mathcal G_i,\label{1-eq:int1}
\end{align}
where
\begin{align}
\mathcal G_1&=-\eta\int_{\Gamma_{r_0}^\pm}v\psi(\mathbf x)e^{\rho\cdot \mathbf x}\mathrm d\sigma,\
\mathcal G_2=\int_{\mathcal S_{r_0}}(\gamma_1-\gamma_2)vu_0\mathrm d\mathbf x,\
\mathcal G_3=\int_{\Lambda_{r_0}}\partial _{\nu}u_0u-\partial _{\nu}uu_0\mathrm d\sigma,\notag\\
\mathcal G_4&=-\eta\sum_{p=1}^\infty\frac{(-1)^p(\sqrt{\gamma_1})^{2p+1}}{2^{2p+1}p!(p+1)!}\int_{\Gamma_{r_0}^\pm}(a_1e^{\mathrm i\vartheta}+b_1e^{-\mathrm i\vartheta})r^{2p+1}e^{\rho\cdot \mathbf x}\mathrm d\sigma,\notag\\
\mathcal G_5&=-\eta\sum_{n=2}^\infty\sum_{p=0}^\infty\frac{(-1)^p(\sqrt{\gamma_1})^{n+2p}}{2^{n+2p}p!(n+p)!}\int_{\Gamma_{r_0}^\pm}(a_ne^{\mathrm in\vartheta}+b_ne^{-\mathrm in\vartheta})r^{n+2p}e^{\rho\cdot \mathbf x}\mathrm d\sigma.\notag
\end{align}

 For a given sector $\mathcal S_{r_0}$, let $ \mathbf d=(\cos\varphi,\sin\varphi)^{\top}$ be the unit vector satisfying \eqref{eq:d assumption}. Once $ \mathbf d$ is fixed, then there are two choices of $\mathbf d^\perp$ as follows,
\begin{equation}\label{1-eq:d1}
\mathbf d^{\perp}=(-\sin \varphi, \cos\varphi)^\top,
\end{equation}
and
\begin{equation}\label{1-eq:d2}
\mathbf d^\perp=
(\sin \varphi,-\cos\varphi)^{\top}.
\end{equation}
%where $\mathbf {\hat x}=(\cos\vartheta,\sin\vartheta)^{T}.$

Taking $\mathbf{d}^\perp$ in the form of \eqref{1-eq:d1}, we can directly observe that the left-hand side of \eqref{1-eq:int1} satisfies:
\begin{align}
\int_{\Gamma_{r_0}^\pm}&(a_1e^{\mathrm i\vartheta}+b_1e^{-\mathrm i\vartheta})re^{\rho\cdot \mathbf x}\mathrm d\sigma
 =(a_1e^{\mathrm i\vartheta_m}+b_1e^{-\mathrm i\vartheta_m})\int_{0}^{r_0}re^{-\tau e^{\mathrm i(\vartheta_m-\varphi)r}}\mathrm dr\notag\\
&+(a_1e^{\mathrm i\vartheta_M}+b_1e^{-\mathrm i\vartheta_M})\int_{0}^{r_0}re^{-\tau e^{\mathrm i(\vartheta_M-\varphi)r}}\mathrm dr\notag\\
&=(a_1e^{\mathrm i\vartheta_m}+b_1e^{-\mathrm i\vartheta_m})\frac{\Gamma(2)}{\tau^2e^{2\mathrm i(\vartheta_m-\varphi)}}+
  (a_1e^{\mathrm i\vartheta_M}+b_1e^{-\mathrm i\vartheta_M})\frac{\Gamma(2)}{\tau^2e^{2\mathrm i(\vartheta_M-\varphi)}}\notag\\
&+\mathcal O\left(\frac{1}{\tau \cos(\vartheta_m-\varphi)}e^{-\frac{r_0}{2}\tau\cos(\vartheta_m-\varphi)}+\frac{1}{\tau \cos(\vartheta_M-\varphi)}e^{-\frac{r_0}{2}\tau\cos(\vartheta_M-\varphi)}\right).\label{1-eq:1est}
\end{align}
%Here and in what follows, we denote the last term in above equation by the notation $\mathcal O(\frac{1}{\tau}e^{-\frac{ r_0}{2}\lambda\tau}),\ \lambda=\lambda_{\vartheta_m,\vartheta_M,\varphi}\in \mathbb R.$
Without loss of generality, we assume that $\cos(\vartheta_m-\varphi)\leq \cos(\vartheta_M-\varphi)$. Hence
\begin{align*}
	&\frac{1}{\tau \cos(\vartheta_m-\varphi)}e^{-\frac{r_0}{2}\tau\cos(\vartheta_m-\varphi)}+\frac{1}{\tau \cos(\vartheta_M-\varphi)}e^{-\frac{r_0}{2}\tau\cos(\vartheta_M-\varphi)}\\
	&=\frac{1}{\tau \cos(\vartheta_m-\varphi)}e^{-\frac{r_0}{2}\lambda  \tau }\left(1+\frac{\cos(\vartheta_m-\varphi)}{\cos(\vartheta_M-\varphi)}e^{-\frac{r_0}{2}  \tau (\cos(\vartheta_M-\varphi)-\cos(\vartheta_m-\varphi))}\right),
\end{align*}
where and in what follows, $\lambda=\cos(\vartheta_m-\varphi)\in \mathbb R_+ .$  For convenience, we denote the last term in \eqref{1-eq:1est} as $\mathcal O(\frac{1}{\tau}e^{-\frac{r_0}{2}\lambda\tau})$ in the rest of the paper .

% we denote the last term in \eqref{1-eq:1est} by the notation $\mathcal{O}\left(\frac{1}{\tau}e^{-\frac{r_0}{2}\lambda\tau}\right)$, where $\lambda = \lambda_{\vartheta_M,\vartheta_m,\varphi}\in \mathbb R_+$.

Similarly, it can be shown directly  that the left-hand side of \eqref{1-eq:int1} with $\mathbf d^{\perp}$ defined in \eqref{1-eq:d2} fulfills the following estimates
\begin{align}
\int_{\Gamma_{r_0}^\pm}&(a_1e^{\mathrm i\vartheta}+b_1e^{-\mathrm i\vartheta})re^{\rho\cdot \mathbf x}\mathrm d\sigma
 =(a_1e^{\mathrm i\vartheta_m}+b_1e^{-\mathrm i\vartheta_m})\int_{0}^{r_0}re^{-\tau e^{\mathrm i(\varphi-\vartheta_m)r}}\mathrm dr\notag\\
&+(a_1e^{\mathrm i\vartheta_M}+b_1e^{-\mathrm i\vartheta_M})\int_{0}^{r_0}re^{-\tau e^{\mathrm i(\varphi-\vartheta_M)r}}\mathrm dr\notag\\
&=(a_1e^{\mathrm i\vartheta_m}+b_1e^{-\mathrm i\vartheta_m})\frac{\Gamma(2)}{\tau^2e^{2\mathrm i(\varphi-\vartheta_m)}}+
  (a_1e^{\mathrm i\vartheta_M}+b_1e^{-\mathrm i\vartheta_M})\frac{\Gamma(2)}{\tau^2e^{2\mathrm i(\varphi-\vartheta_M)}}\notag\\
&+\mathcal O(\frac{1}{\tau}e^{-\frac{r_0}{2}\lambda\tau}).\notag
\end{align}

Next we estimate each term $\mathcal G_i$ ($i=1,\dots,5$) on the right-hand side of \eqref{1-eq:int1} with respect to $\tau$ as $\tau \rightarrow \infty$. Using  Proposition \ref{1-prop:some est}, we have
\begin{align}
\vert \mathcal G_1\vert
&\leq\left \vert \int_{\Gamma_{r_0}^\pm}v\psi e^{\rho\cdot \mathbf x}\mathrm d\sigma\right \vert \notag\\
&\leq\vert \eta\vert \sum_{n=1}^\infty\sum_{p=0}^\infty\frac{(-1)^p(\sqrt{\gamma_1})^{n+2p}(\vert a_n\vert+\vert b_n\vert) }{2^{n+2p}p!(n+p)!}\int_{\Gamma_{r_0}^\pm} r^{n+2p}\vert \psi(\mathbf x)e^{\rho\cdot \mathbf x}\vert \mathrm d\sigma\notag\\
&\lesssim \vert \eta\vert \sum_{n=1}^\infty\sum_{p=0}^\infty\frac{(-1)^p(\sqrt{\gamma_1})^{n+2p}(\vert a_n\vert+\vert b_n\vert) }{2^{n+2p}p!(n+p)!}
\left ( \tau^{-(n+2p+\frac{7}{6})}+\mathcal O(\tau^{-\frac{7}{6}}e^{-\frac{1}{2}\varsigma r_0\tau})\right)\notag\\
&\lesssim \tau^{-\frac{13}{6}}+\mathcal O(\tau^{-\frac{7}{6}}e^{-\frac{1}{2}\varsigma r_0\tau})\notag
\end{align}
and
\begin{align}
\vert \mathcal G_2\vert
&\leq \vert \gamma_1-\gamma_2\vert \int_{\mathcal S_{r_0}}\vert vu_0\vert\mathrm d\mathbf x\notag\\
&\leq \vert \gamma_1-\gamma_2\vert  \sum_{n=1}^\infty\sum_{p=0}^\infty\frac{(-1)^p(\sqrt{\gamma_1})^{n+2p}(\vert a_n\vert+\vert b_n\vert) }{2^{n+2p}p!(n+p)!} \int_{\mathcal S_{r_0}}r^{n+2p}\vert u_0\vert \mathrm d\mathbf x\notag\\
&\lesssim \tau^{-\frac{41}{12}}+\tau^{-3}+\mathcal O(\tau^{-1}e^{-\frac{1}{2}\varsigma r_0\tau}).\notag
\end{align}
Moreover, there holds that
\begin{align}
\vert \mathcal G_3\vert&\leq  \|\partial_{\nu}u_0\|_{L^2(\Lambda_{r_0})}\|u\|_{L^2(\Lambda_{r_0})}
+\|u_0\|_{H^{\frac{1}{2}}(\Lambda_{r_0})}\|\partial_{\nu}u\|_{H^{-\frac{1}{2}}(\Lambda_{r_0})}.\notag\\
&\leq C(\|\partial_\nu u_0\|_{L^2(\Lambda_{r_0})}+\|u_0\|_{H^1(\Lambda_{r_0})})\|u\|_{H^1(\Lambda_{r_0})}\notag\\
&\leq\|u_0\|_{H^1(\Lambda_{r_0})} +\|\nabla u_0\|_{L^2(\Lambda_{r_0})}\lesssim (1+\tau)(1+\tau^{-\frac{2}{3}})e^{-\varsigma r_0\tau}.\notag
\end{align}
As for the last two terms of $\mathcal G_4\ \mbox{and}\ \mathcal G_5$, similarly we can deduce that
\begin{align}\label{1-eq:G4-5}
\vert \mathcal G_4+\mathcal G_5\vert &\lesssim \tau^{-3}+\mathcal O(\tau^{-1}e^{-\frac{1}{2}\varsigma r_0\tau}).
\end{align}

Combining \eqref{1-eq:1est}$-$\eqref{1-eq:G4-5} with \eqref{1-eq:int1} and letting $\tau\to \infty$, this shows that
\begin{equation}\notag
\begin{cases}
(a_1e^{\mathrm i\vartheta_m}+b_1e^{-\mathrm i\vartheta_m})e^{2\mathrm i(\varphi-\vartheta_m)}+(a_1e^{\mathrm i\vartheta_M}+b_1e^{-\mathrm i\vartheta_M})e^{2\mathrm i(\varphi-\vartheta_M)}=0,\notag\\
(a_1e^{\mathrm i\vartheta_m}+b_1e^{-\mathrm i\vartheta_m})e^{2\mathrm i(\vartheta_m-\varphi)}+(a_1e^{\mathrm i\vartheta_M}+b_1e^{-\mathrm i\vartheta_M})e^{2\mathrm i(\vartheta_M-\varphi)}=0.
\end{cases}
\end{equation}
By multiplying $e^{-2\mathrm i\varphi}$ on both two sides of the first identity above and  $e^{2\mathrm i\varphi}$ on the second, we get that
%a system of homogeneous linear equation:
%\begin{equation}\notag
%\begin{cases}
%a_1(e^{-\mathrm i\vartheta_m}+ e^{-\mathrm i\vartheta_M})+b_1(e^{-3\mathrm i\vartheta_m}+e^{-3\mathrm i\vartheta_M})=0,\\
%a_1(e^{3\mathrm i\vartheta_m}+ e^{-3\mathrm i\vartheta_M})+b_1(e^{\mathrm i\vartheta_m}+e^{\mathrm i\vartheta_M})=0,
%\end{cases}
%\end{equation}
%which is equal to the following system of homogeneous linear equations
\begin{equation}\notag
A\left(
\begin{matrix}
a_1\\
b_1
\end{matrix}
\right)=\mathbf 0,
\end{equation}
where
\begin{equation}\notag
A=\left(
\begin{matrix}
e^{-\mathrm i\vartheta_m}+ e^{-\mathrm i\vartheta_M} & e^{-3\mathrm i\vartheta_m}+e^{-3\mathrm i\vartheta_M}\\
e^{3\mathrm i\vartheta_m}+ e^{3\mathrm i\vartheta_M}& e^{\mathrm i\vartheta_m}+e^{\mathrm i\vartheta_M}
\end{matrix}
\right).
\end{equation}
Since $\vartheta_M-\vartheta_m\not=\frac{\pi}{2}$ by noting $\mathcal S_{r_0}$ is  irrational, after some calculations, it shows that
\begin{equation}\notag
\det(A)=2(\cos(\vartheta_M-\vartheta_m)-\cos(3(\vartheta_M-\vartheta_m)))\not=0,
\end{equation}
which implies that $a_1=b_1=0.$

Suppose that
\begin{align}\label{eq:ass ajbj}
	a_j=b_j=0, \quad j=1,\ldots, \ell, \quad \ell \geq 1,
\end{align}
then we shall verify that $a_{\ell+1}=b_{\ell+1}=0$. Under the assumption \eqref{eq:ass ajbj}, using \eqref{1eq:vexpansion}, we have
\begin{align}
v&=\sum_{n=\ell+1}^\infty(a_ne^{\mathrm in\vartheta}+b_ne^{-\mathrm in\vartheta})J_{n}(\sqrt{\gamma_1}r)=\frac{(\sqrt{\gamma_1})^{\ell+1}(a_{\ell+1}e^{\mathrm i(\ell+1)\vartheta}+b_{\ell+1}e^{-\mathrm i(\ell+1)\vartheta})}{2^{\ell+1}(\ell+1)!}r^{\ell+1}\notag\\
&+\sum_{p=1}^\infty\frac{(-1)^p(\sqrt{\gamma_1})^{2p+\ell+1}(a_{\ell+1} e^{\mathrm i(\ell+1)\vartheta}+b_{\ell+1}e^{-\mathrm i(\ell+1)\vartheta})}{2^{n+2p}p!(\ell+1+p)!}r^{2p+\ell+1}\notag\\
&+\sum_{n=\ell+2}\sum_{p=0}^\infty\frac{(-1)^p(\sqrt{\gamma_1})^{2p+n}(a_{n} e^{\mathrm i n\vartheta}+b_{n}e^{-\mathrm in\vartheta}) }{2^{n+2p}p!(n+p)!}r^{2p+n}.\notag
\end{align}
From this, we can then derive the following integral equation
\begin{align}
&\frac{\eta (\sqrt{\gamma_1})^{\ell+1}}{2^{\ell +1}(\ell+1)!}\int_{\Gamma_{r_0}^\pm}(a_{\ell+1}e^{\mathrm i(\ell+1)\vartheta}+b_{\ell+1}e^{-\mathrm i(\ell+1)\vartheta})r^{\ell+1}e^{\rho\cdot \mathbf x}\mathrm d\sigma\notag\\
&\ =-\eta\int_{\Gamma_{r_0}^\pm}v\psi(\mathbf x)e^{\rho\cdot \mathbf x}\mathrm d\sigma+\int_{\mathcal S_{r_0}}(\gamma_1-\gamma_2)vu_0\mathrm d\mathbf x
+\int_{\Lambda_{r_0}}\partial vu_0-\partial_\nu uu_0\mathrm d\sigma\notag\\
&\ -\sum_{p=1}^\infty \frac{\eta(-1)^p (\sqrt{\gamma_1})^{2p+\ell+1}}{2^{\ell+1+2p}p!(\ell+1+p)!}\int_{\Gamma_{r_0}^\pm}(a_{\ell+1} e^{\mathrm i(\ell+1)\vartheta}+b_{\ell+1}e^{-\mathrm i(\ell+1)\vartheta})r^{2p+\ell+1}e^{\rho\cdot \mathbf x}\mathrm d\sigma\notag\\
&\ -\sum_{n=\ell+2}^\infty\sum_{p=0}^\infty\frac{\eta(-1)^p(\sqrt{\gamma_1})^{2p+n} }{2^{n+2p}p!(n+p)!} \int_{\Gamma_{r_0}^\pm} (a_{n} e^{\mathrm i n\vartheta}+b_{n}e^{-\mathrm in\vartheta})r^{2p+n}e^{\rho\cdot \mathbf x}\mathrm d\sigma.\notag
\end{align}
Then, by an asymptotic analysis similar to that used to prove $a_1=b_1=0$, we can conclude that
\begin{equation}\notag
\begin{cases}
(a_{\ell+1}e^{\mathrm i(\ell+1)\vartheta_m}+b_{\ell+1}e^{-\mathrm i(\ell+1)\vartheta_m})e^{-\mathrm i(\ell+2)(\vartheta_m-\varphi)}\\
\quad +(a_{\ell+1}e^{\mathrm i(\ell+1)\vartheta_M}+b_{\ell+1}e^{-\mathrm i(\ell+1)\vartheta_M})e^{-\mathrm i(\ell+2)(\vartheta_M-\varphi)}=0,\\
(a_{\ell+1}e^{\mathrm i(\ell+1)\vartheta_m}+b_{\ell+1}e^{-\mathrm i(\ell+1)\vartheta_m})e^{-\mathrm i(\ell+2)(\varphi-\vartheta_m)}\\
\quad +(a_{\ell+1}e^{\mathrm i(\ell+1)\vartheta_M}+b_{\ell+1}e^{-\mathrm i(\ell+1)\vartheta_M})e^{-\mathrm i(\ell+2)(\varphi-\vartheta_M)}=0,
\end{cases}
\end{equation}
which further induces that
\begin{equation}\notag
B\left(\begin{matrix}
a_{\ell+1}\\
b_{\ell+1}
\end{matrix}
\right)=\mathbf 0,
\end{equation}
where the coefficient matrix $B$ is
\begin{equation}\notag
B=\left(\begin{matrix}
e^{-\mathrm i\vartheta_m}+e^{-\mathrm i\vartheta_M}& e^{-(2\ell+3)\mathrm i\vartheta_m}+e^{-(2\ell+3)\mathrm i\vartheta_M}\\
e^{(2\ell+3)\mathrm i\vartheta_m}+e^{(2\ell+3)\mathrm i\vartheta_M}& e^{\mathrm i\vartheta_m}+e^{\mathrm i\vartheta_M}
\end{matrix}
\right).
\end{equation}
Using  $\mathcal S_{r_0}$ is  irrational, we know that $\vartheta_M-\vartheta_m\not=\frac{\alpha}{\ell+1}\pi,$ where $\alpha=1,2,\dots,\ell$, and $\vartheta_M-\vartheta_m\not=\frac{\sigma}{\ell+2}\pi,$ where $\sigma=1,2,\dots,\ell+1.$
Then the determinant of $B$ satisfies
$$\det(B)=2(\cos(\vartheta_M-\vartheta_m)-\cos((2\ell+3)(\vartheta_M-\vartheta_m)))\not=0,$$
which implies that $a_{\ell+1}=b_{\ell+1}=0$. {By mathematical induction,  we deduce that $v\equiv 0$ in $B_{r_0}$. By unique continuation principle for elliptic PDE, we know that $v\equiv 0$ in $D$. Due to the boundary conditions in \eqref{1-eq:scatter}, one has $w=\partial_\nu w=0$ on $\Gamma_{r_0}$, which implies that $w\equiv 0$ in $D$ by using unique continuation principle for elliptic equation.}
%since the unique continuation principle for the elliptic equation.}

The proof is complete.
\end{proof}

In the following we will prove  Corollary \ref{thm:invis}.

\begin{proof}[The proof of Corollary \ref{thm:invis}]
We prove this theorem using a contradiction argument. Assume that $\Omega$ has an irrational convex corner and is invisible.  Since the Laplacian $\Delta$ is invariant under rigid motion, this corner of $\Omega$ can be described by $\mathcal S_{r_0}$ as given by \eqref{1eq:sr0}. Since $u^\infty\equiv 0$, by Rellich's theorem, one has the PDE system \eqref{eq:PB}, where $v=u^i$, $w=u^- \chi_{\Omega}+u^i\chi_{D\backslash \overline\Omega }$, $\gamma_1=k^2$ and  $\gamma_2=k^2q \chi_{\Omega}+k^2 \chi_{D\backslash \overline \Omega }$. Here $D:=B_R$ is chosen such that $\Omega\Subset D$. By applying Theorem \ref{thm:ucp}, we conclude that  $u^{i} \equiv 0$ in $D$,  which leads to a  contradiction by  the unique continuation principle.
\end{proof}

To address the inverse problem \eqref{eq:ip}, it is essential to define the admissible class of scatterers.
   Recall that Definition \ref{1-def:polygon} introduces the concept of an irrational polygon, which will be utilized in Definition \ref{1-def:com-ad}.
% \begin{defn}\label{1-def:polygon}
%Suppose $\Omega\subset \mathbb{R}^2$ is a polygon. Let $\omega=\lambda\cdot \pi$, where $\lambda\in (0,2)$, be an interior angle of $\Omega$. An angle $\omega$ is considered \textit{irrational} if $\lambda$ is an irrational number; otherwise, it is considered \textit{rational}. Furthermore, if $\lambda \in (0,1)$ is an irrational number, then the angle $\omega$ is considered \text{convex irrational}. The polygon $\Omega$ is termed an \textit{irrational polygon} if each of its interior angles is an irrational angle; otherwise, it is considered rational.
%\end{defn}

\begin{defn}\label{1-def:com-ad}
We define $\Omega$ as an \textit{admissible complex  scatterer} if it comprises finitely many disjointed scatterers, denoted as follows:
%$\Omega$ is said to be an admissible complex irrational polygonal scatterer if it consists of finitely many disjointed convex irrational polygonal scatterers, i.e.,
$$
(\Omega;q,\eta) =\bigcup _{\ell=1}^N(\Omega_\ell;q_\ell,\eta_{\ell}),\ \overline{\Omega_{\ell}}\cap\overline{\Omega_m}=\emptyset \ \mbox{for}\ \ell\not=m,$$
where $N\in \mathbb{N}$, and each $\Omega_{\ell}$ is a bounded Lipschtiz domain. Here, $q = \sum_{\ell=1}^N q_{\ell} \chi_{\Omega_{\ell}}$ with ${\rm supp}(q_\ell)\subset \Omega_\ell$ and $q_\ell \in H^2(\Omega_\ell )$. Additionally, $\eta = \sum_{\ell=1}^N \eta_{\ell} \chi_{\partial \Omega_{\ell}}$  and $\eta_\ell$ is a constant for $\ell=1, \ldots, N$.

Moreover, if each $\Omega_{\ell}$ is an irrational polygon, then $\Omega$ is considered to be an \textit{admissible complex  irrational polygonal scatterer.} Similarly, if each $\Omega_{\ell}$ is a convex irrational polygon, then $\Omega$ is considered to be an \textit{admissible complex  convex irrational polygonal scatterer.}
%where $N\in \mathbb N$ and each of $\Omega_{\ell}$ is a convex irrational polygon. Here, $q=\sum_{\ell}^Nq_{\ell}\chi_{ \Omega_{\ell}}$ (${\rm supp}(q_\ell)\subset \Omega_\ell, \ q_\ell \in L^\infty (\Omega_\ell ) $) and $\eta=\sum_{\ell}^N\eta_{\ell}\chi_{\partial \Omega_{\ell}}$, $\ell=1, \ldots, N$.
\end{defn}

The following theorem provides a local uniqueness result for determining the shape of a scatterer through a single far-field measurement, offering a more rigorous version of   Corollary \ref{thm:uni pre}. It asserts that the difference set between two admissible complex polygonal scatterers cannot contain a convex irrational corner if the far-field measurements associated with these two scatterers, for a fixed incident wave, are identical. Furthermore, we establish the global unique identification of the shape of admissible complex convex irrational polygonal scatterers using only a single far-field measurement.
\begin{thm}\label{1-thm:unique}

  Consider the conductive medium scattering described by \eqref{eq:sca} with the given incident wave $u^i$. Let $u_\ell^s \in H^1_{\rm loc}(\mathbb{R}^2)$ represent the scattered wave associated with $u^i$ and an admissible complex  polygonal scatterer $(\Omega_\ell;q_\ell,\eta_\ell)$ be defined in Definition \ref{1-def:com-ad}, where
\begin{equation}\notag
(\Omega_\ell;q_\ell,\eta_\ell)=\bigcup_{i=1}^{N_\ell}(\Omega_{i,\ell};q_{i,\ell},\eta_{i,\ell}),\quad N_{\ell}\in  \mathbb{N}_{+},\quad \ell=1,2.
\end{equation}
Let $u^\infty_{\ell}$ $(\ell=1,2)$, defined in \eqref{eq:asy}, be the far-field pattern corresponding to $u_\ell^s$, respectively.

If
\begin{equation}\label{eq:thm ass far}
u^{\infty}_1(\mathbf{\hat{x}};u^i)=u^{\infty}_2(\mathbf{\hat{x}};u^i),
\end{equation}
for all $\mathbf{\hat{x}}\in \mathbb{S}^1$ and a fixed incident wave $u^i$, then
{the difference $\Omega_1\aleph\Omega_2$ defined in Corollary \ref{thm:uni pre}} cannot contain a convex irrational corner.

Moreover,  if \eqref{eq:thm ass far} is satisfied and $\Omega_\ell$ is an admissible complex convex irrational polygonal scatterer $(\ell=1,2)$, then $\Omega_1=\Omega_2$. In other words, $N_1=N_2$ and $\Omega_{i,1}=\Omega_{i,2}$ for $i=1,\ldots, N_1$.
\end{thm}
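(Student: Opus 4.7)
The plan is to deduce both conclusions by coupling Rellich's theorem with the novel UCP of Theorem \ref{thm:ucp}, the key observation being that a convex irrational corner of $\Omega_1\aleph\Omega_2$ furnishes exactly the transmission data required by \eqref{eq:PB}. As a preliminary step, the hypothesis $u_1^\infty\equiv u_2^\infty$ on $\mathbb{S}^1$ combined with Rellich's lemma yields $u_1\equiv u_2$ in the unbounded connected component $U$ of $\mathbb{R}^2\setminus(\overline{\Omega_1}\cup\overline{\Omega_2})$.

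For the local statement, suppose for contradiction that $\Omega_1\aleph\Omega_2$ contains a convex irrational corner at some $\mathbf{x}_0$. Using the admissibility of the $\Omega_\ell$ and the definition of $\aleph$, after a rigid motion sending $\mathbf{x}_0$ to the origin one may choose $r_0>0$ small enough so that $\mathbf{0}$ is a vertex of a single component $\Omega_{j_0,1}$ of $\Omega_1$, with $\Omega_{j_0,1}\cap B_{r_0}=\mathcal{S}_{r_0}$ and edges $\Gamma^\pm_{r_0}$, and moreover $B_{r_0}\cap\overline{\Omega_2}=\emptyset$, $B_{r_0}\cap\Omega_{i,1}=\emptyset$ for $i\neq j_0$. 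Let $D$ be a small Lipschitz domain containing $\overline{\mathcal{S}_{r_0}}$, disjoint from $\Omega_2$ and from $\Omega_{i,1}$ for $i\neq j_0$, and set $\Omega:=D\cap\Omega_{j_0,1}$, so that $\Omega\cap B_{r_0}=\mathcal{S}_{r_0}$ and $\mathbf{0}\in\partial\Omega$. Define $v:=u_2|_D$ and $w:=u_1^-\chi_{\Omega}+u_1^+\chi_{D\setminus\overline{\Omega}}$. The identity $u_1^+=u_2$ on $D\setminus\overline{\Omega_1}$ ensures $w\in H^1(D)$, and the conductive transmission $\partial_\nu u_1^-=\partial_\nu u_1^++\eta_{j_0,1}u_1^+$ on $\partial\Omega_{j_0,1}\cap D$ translates directly into $\mathcal{B}\mathbf{u}=0$ on $\Gamma^\pm_{r_0}$. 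A short calculation shows $(v,w)^\top$ solves \eqref{eq:PB} with $\gamma_1=k^2$, $\gamma_2=k^2q_{j_0,1}\chi_{\Omega}+k^2\chi_{D\setminus\overline{\Omega}}\in L^\infty(D)\cap H^2(\Omega)$ (the regularity coming from admissibility), and $\eta=\eta_{j_0,1}$. If $\eta_{j_0,1}\neq 0$, Theorem \ref{thm:ucp} forces $v\equiv 0$ in $D$; unique continuation then gives $u_2\equiv 0$ in $\mathbb{R}^2$, contradicting the presence of the nontrivial incident wave $u^i$ in $u_2=u^i+u_2^s$ since $u^i$ does not satisfy the Sommerfeld condition. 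The residual case $\eta_{j_0,1}=0$ reduces to pure inhomogeneous medium scattering and is covered by existing results such as \cite{BL}.

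For the global statement, assume $\Omega_1,\Omega_2$ are admissible complex convex irrational polygonal scatterers with $\Omega_1\neq\Omega_2$, and peel from the outside inward. First, if $\text{Co}(\Omega_1)\neq\text{Co}(\Omega_2)$ then $\Omega_1\aleph\Omega_2$ is a non-empty polygonal region whose boundary contains a vertex of one convex hull lying outside the other; every such hull-vertex is realised by a vertex of an outermost constituent of $\Omega_1$ or $\Omega_2$, and is therefore a convex irrational corner of $\Omega_1\aleph\Omega_2$, in contradiction with the local conclusion just proved. Hence $\text{Co}(\Omega_1)=\text{Co}(\Omega_2)$. A vertex-by-vertex matching of this common hull, combined with the convexity and irrationality of the outermost constituents and the disjointness of components, forces the outermost constituents of $\Omega_1$ and $\Omega_2$ to coincide. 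Peeling these off and using Rellich inside to transfer the equality of scattered waves to the remaining nested structure, the same argument applied recursively yields $N_1=N_2$ and, up to relabelling, $\Omega_{i,1}=\Omega_{i,2}$.

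The central technical obstacle lies in the second paragraph: one must verify cleanly that the composite $w$ belongs to $H^1(D)$ with precisely the conductive transmission data appearing in \eqref{op:PB}, and that the remaining hypotheses of Theorem \ref{thm:ucp}, in particular the $H^2$-regularity of $\gamma_2$ in a neighbourhood of the corner and the irrationality of $\mathcal{S}_{r_0}$, are simultaneously realised under the admissibility framework. A secondary but nontrivial point in the peeling argument is that the convex hull of a disjoint union of convex sets is in general strictly larger than the union itself, so matching vertices of the common hull to vertices of the constituents requires a careful geometric argument rather than being automatic.
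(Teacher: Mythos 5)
Your proposal is correct and follows essentially the same route as the paper: Rellich's lemma plus the trace theorem produce the conductive transmission system at the convex irrational corner, Theorem \ref{thm:ucp} forces $u_2\equiv 0$ near that corner, and unique continuation together with the Sommerfeld radiation condition yields the contradiction, with the global statement reduced to the local one. The differences are cosmetic or additive rather than structural: you assemble the pair $(v,w)$ on an auxiliary domain $D$ so as to match \eqref{eq:PB} literally, whereas the paper applies Theorem \ref{thm:ucp} directly to the local system in $B_{r_0}$; you explicitly treat the degenerate case $\eta_{j_0,1}=0$, which the paper's proof silently skips even though Theorem \ref{thm:ucp} requires $\eta\neq 0$; and you supply a convex-hull peeling argument for the global part, which the paper dispatches in a single sentence.
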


%Now, we are in  the position to prove  Theorem \ref{1-thm:unique}.

\begin{proof}%[The proof of Theorem \ref{1-thm:unique}]
We  prove the first conclusion of this theorem by contradiction. When \eqref{eq:thm ass far} holds, assume that {$\Omega_1\aleph\Omega_2$ } contains an irrational corner.  Underlying the fact that the Laplacian $\Delta$ is invariant under the rigid motion and by the definition of an admissible complex  irrational polygonal scatterer, we also assume that there exists an irrational corner $\mathcal S_{r_0}$ such that $\mathcal S_{r_0}\Subset \Omega_1\setminus \overline{\Omega_2}$, where  $r_0\in \mathbb R_{+}$ is sufficiently  small. Let $B_{r_0}$ be a disk with radius $r_0 \in \mathbb R_+$. % be chosen such that $B_{r_0}\Subset \Omega_1\setminus\overline{\Omega_2}$ and $\mathcal S_{r_0} \subset B_{r_0}$.

%we know that $\Omega_1\Delta\Omega_2$ contains a corner  there is an irrational angle contained in $\Omega_1\Delta\Omega_2$. Without loss of generality, we assume that $\mathbf 0$ is the vertex of this irrational angle with $\mathbf 0\in \partial \Omega_1$ but $\mathbf 0\not \in \partial \Omega_2$ and $\mathbf 0\in \mathcal S_{r_0}\subset \Omega_1\setminus\overline{\Omega_2}$ with a sufficiently  small $r_0\in \mathbb R_{+}$, where $\mathcal S_{r_0}$ is defined in \eqref{1eq:sr0}.

Let $u_1(\mathbf x)$ and $u_2(\mathbf x)$ be total wave fields  associated $(\Omega_1;q_1,\eta_1)$ and $(\Omega_2;q_2,\eta_2)$, respectively. Then using \eqref{eq:thm ass far},  by Rellich lemma, we have $u_1=u_2$ in $\mathbb R^2\setminus\overline{\Omega_1\cup\Omega_2}$.  By virtue of the trace theorem, we have
\begin{align}\label{eq:u1u2 boundary}
	u^{-}_1=u^{+}_1=u^{+}_2\ \mbox{on}\ \Gamma_{r_0}^{\pm}.
\end{align}
Utilizing the conductive boundary conditions of \eqref{eq:sca} and \eqref{eq:u1u2 boundary}, since $u_1$ and $u_2$ satisfy \eqref{eq:sca},  we can therefore obtain that
\begin{equation}\notag
\begin{cases}
\Delta u^{-}_1+k^2(q_1 \chi_{\mathcal S_{r_0}}+ \chi_{B_{r_0}\backslash \overline{\mathcal S_{r_0}} })u^{-}_1=0,\hspace*{1.6cm} \mbox{in}\  B_{r_0},\\
\Delta u^{+}_2+k^2u^{+}_2=0,\hspace*{4.9cm}  \mbox{in}\  B_{r_0},\\
u^{-}_1=u^{+}_2,\ \partial_\nu u^{-}_1=\partial_\nu u^{+}_2+\eta u^{+}_2,\hspace*{2.6cm}  \mbox{on}\ \Gamma_{r_0}^\pm.
\end{cases}
\end{equation}
It is clear that $u_2$ is analytic in $\mathcal S_{r_0}$. By applying Theorem \ref{thm:ucp}, we have
$$
u_2\equiv 0\ \mbox{in}\ \mathcal S_{r_0}.
$$
Using the unique continuation principle and noting $u_2=u^i+u_2^s$, due to the fact that the scattered wave $u_2^s$ fulfills the Sommerfeld radiation condition, we get the contradiction.

The second conclusion can be directly obtained by using the definition of an admissible complex convex irrational polygonal scatterer.
%The proof is complete.
\end{proof}

%\Blu{Corollary \ref{cor:q1} demonstrates that if $\Omega$ is a convex polygonal medium scatterer and the refractive index $q$ is $C^{1,\alpha}$ in $\Omega$, then  we can establish the unique identification for the $C^{1,\alpha}$ class of  coefficients of $q$. The detailed proof follows directly from the proofs of Theorems \ref{2thm:un-eta-cell} and \ref{2thm:un-eta-nest}, it is therefore omitted here for brevity.

%\begin{cor}\label{cor:q1}
%Let $(\Omega_\ell;q_\ell,\eta_\ell),\ell=1,2$ be two convex polygonal scatterers associated with the scattering problem \eqref{eq:sca} and satisfy the admissible condition \eqref{2eq:ad-1}, where $\eta_\ell\in \mathbb C$, $q_\ell\in \mathbb C$ and  $q_\ell\in C^{1,\alpha}(\overline {\Omega_\ell})$. Let $u_\ell^\infty(\hat{\mathbf x};u^i)$ be the far-filed pattern corresponding to $(\Omega_\ell;q_\ell,\eta_\ell)$. Assume that $u_1^{\infty}(\hat{\mathbf x};u^i )=u_2^{\infty}(\hat{\mathbf x};u^i ),$
%then one has $\Omega_1=\Omega_2$ and
%$q_{1}^{(i)}=q_{2}^{(i)},i=0,1,2$, where $q_{\ell }^{(i)}$ is the $C^{1,\alpha}$ coefficients corresponding to the expansions $q_\ell=q_\ell^{(0)}+q_{\ell}^{(1)}x_1+q_{\ell}^{(2)}x_2+\delta q_\ell.$
%\end{cor}
%}

\section{Unique recovery results for the  conductive polygonal-nest and polygonal-cell medium scatterers}\label{sec:3}

%\marginpar{\blue {revise } }

%In this section, we will first introduce  the  polygonal-nest and polygonal-cell conductive medium scatter. The corresponding inverse problem will be formulated according to \eqref{eq:ip} when the scatterer of the direct scattering problem \eqref{eq:sca} has polygonal-nest or polygonal-cell structure. In order to establish the uniqueness result for determining the shape of the scatter and its polygonal-nest partition structure by the single far-field measurement, we need to introduce the admissible class and prove the corresponding  total wave has $C^1$ H\"older regularity near the corner. Furthermore, the reflective index of a linear polynomial form and constant conductive boundary parameter can be uniquely identified by a single far-field measurement when the shape is determined.

In this section, we introduce the concepts of conductive polygonal-nest and polygonal-cell medium scatterers. We then formulate the corresponding inverse problems according to \eqref{eq:ip} when the scatterer in the direct scattering problem \eqref{eq:sca} exhibits a polygonal-nest or polygonal-cell structure. To establish the uniqueness result for determining the shape of the scatterer and its polygonal-nest or polygonal-cell structure using a single far-field measurement, we introduce the concept of the admissible class and prove that the gradient of the total wave field has H\"older regularity near the corners. Additionally, we demonstrate that the reflective index, represented by a linear polynomial, and the constant conductive boundary parameter can be uniquely identified through a single far-field measurement once the shape is determined.

%will consider two geometric setups for the conductive medium scatterer $\Omega$. These are  polygonal-nest geometry and polygonal-cell geometry, respectively. And we show their definitions as follows (cf. \cite{CDL20}).

\begin{defn}\label{1-def:nest}
Let $\Omega$ be a bounded, simply connected Lipschitz domain with a connected complement $\mathbb{R}^2 \setminus \overline{\Omega}$. We say that $\Omega$ has a polygonal-nest structure if there exist open, convex, simply connected polygons $\Sigma_i$, for $i = 1, \dots, N \in \mathbb{N}$, such that
$$
\Sigma_N \Subset \Sigma_{N-1}\Subset\dots\Subset\Sigma_2\Subset\Sigma_1=\Omega.
$$
\end{defn}

 A schematic illustration of a simply connected Lipschitz domain with a polygonal-nest structure is displayed in Figure \ref{1-fig:nest}.

\begin{defn} \label{1-def:cell}
Let $\Omega$ be a bounded, simply connected Lipschitz domain with a connected complement $\mathbb{R}^2\setminus\overline{\Omega}$. We say that $\Omega$ has a polygonal-cell structure if there exist open, convex, simply connected polygons $\Sigma_i$, for $i = 1, \dots, N \in \mathbb{N}$, satisfying the following conditions:
\begin{itemize}
\item[(a)] $\Sigma_i \subset \Omega$ and $\Sigma_i \cap \Sigma_j = \emptyset$ for $i \neq j$,
\item[(b)] $\cup_{i=1}^N \overline{\Sigma_i} = \overline{\Omega}$,
\item[(c)] For each $\Sigma_i$, there exists at least one vertex $\mathbf{x}_{p,i}$ such that the two edges of $\partial \Sigma_i$ associated with $\mathbf{x}_{p,i}$ are denoted by $\Gamma_i^{\pm} \subset \partial\Omega$.
\end{itemize}
%Please refer to Fig. \ref{1-fig:cell} for a schematic illustration.
%Let $\Omega$ be a bounded and simple-connected Lipschitz domain with a connected complement $\mathbb R^2\setminus\overline\Omega$, if there exists some open convex simply-connected polygons $\Sigma_i,i=1,\dots,\ell\in \mathbb N$ satisfying
%\begin{itemize}
%\item[(a)]$\Sigma_i\subset\Omega$ and $\Sigma_i\cap\Sigma_j=\emptyset,$ for $i\not=j$;
%\item[(b)]$\cup_{i=1}^\ell\overline\Sigma_i=\overline{\Omega};$
%\item[(c)]for each $\Sigma_i$, there exists at least one vertex $\mathbf x_{p,i}$ such that the two edges of $\partial \Sigma_i$  associated with $\mathbf x_{p,i}$ denoted by $\Gamma^{\pm}_{i}\subset\partial\Omega$;
%\end{itemize}
%then we said that $\Omega$ has a polygonal-cell partition.
%See Fig.\ref{1-fig:cell} for a schematic illustration.
\end{defn}
 A schematic illustration of a simply connected Lipschitz domain with a polygonal-cell structure is displayed in Figure \ref{1-fig:cell}.

\begin{figure}[ht!]
\centering
\begin{subfigure}[b]{0.34\textwidth}
\centering
\includegraphics[width=\linewidth]{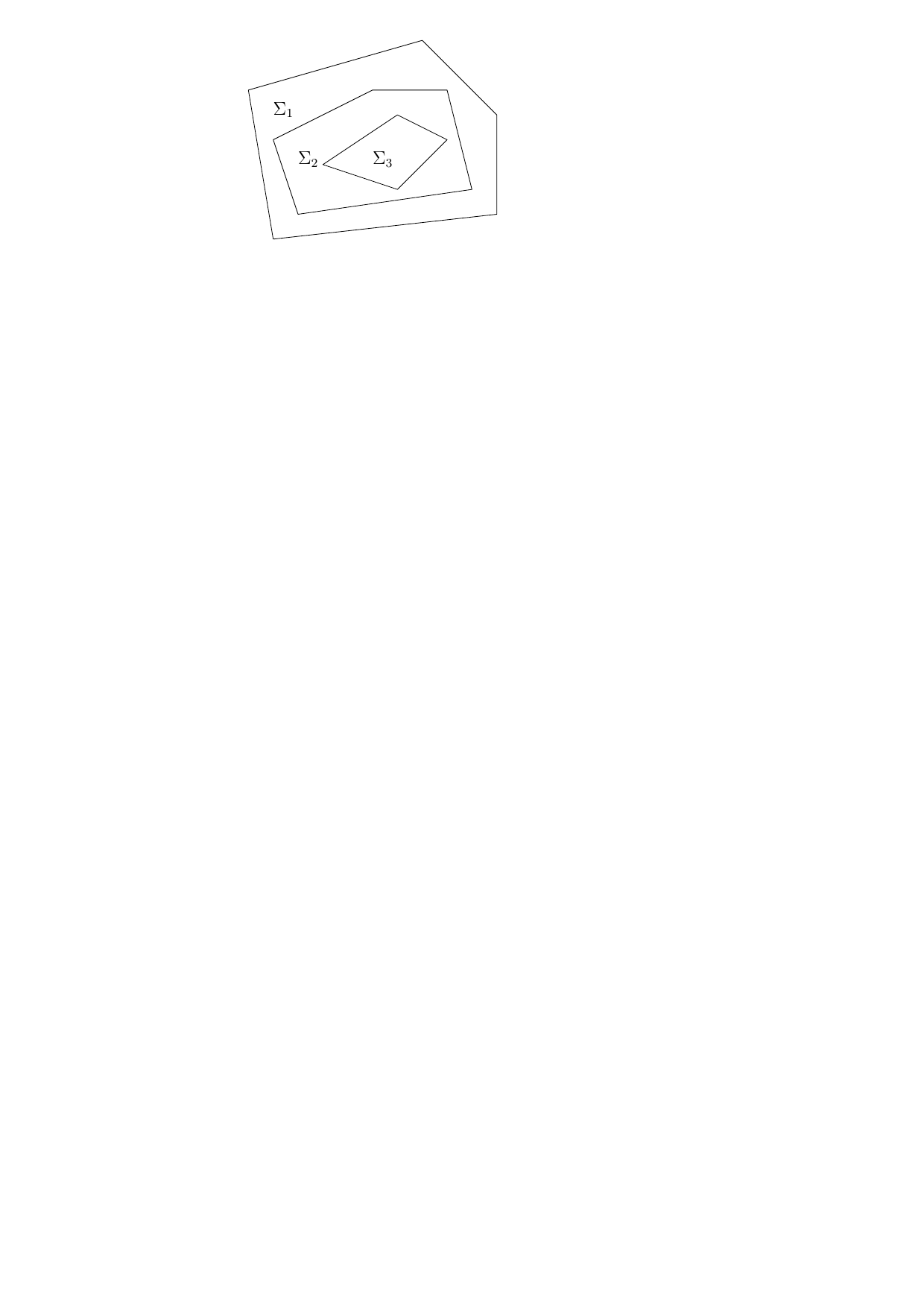}
\caption{A polygonal-nest structure.}
\label{1-fig:nest}
\end{subfigure}
\quad\quad\quad
\begin{subfigure}[b]{0.45\textwidth}
\centering
\includegraphics[width=\textwidth]{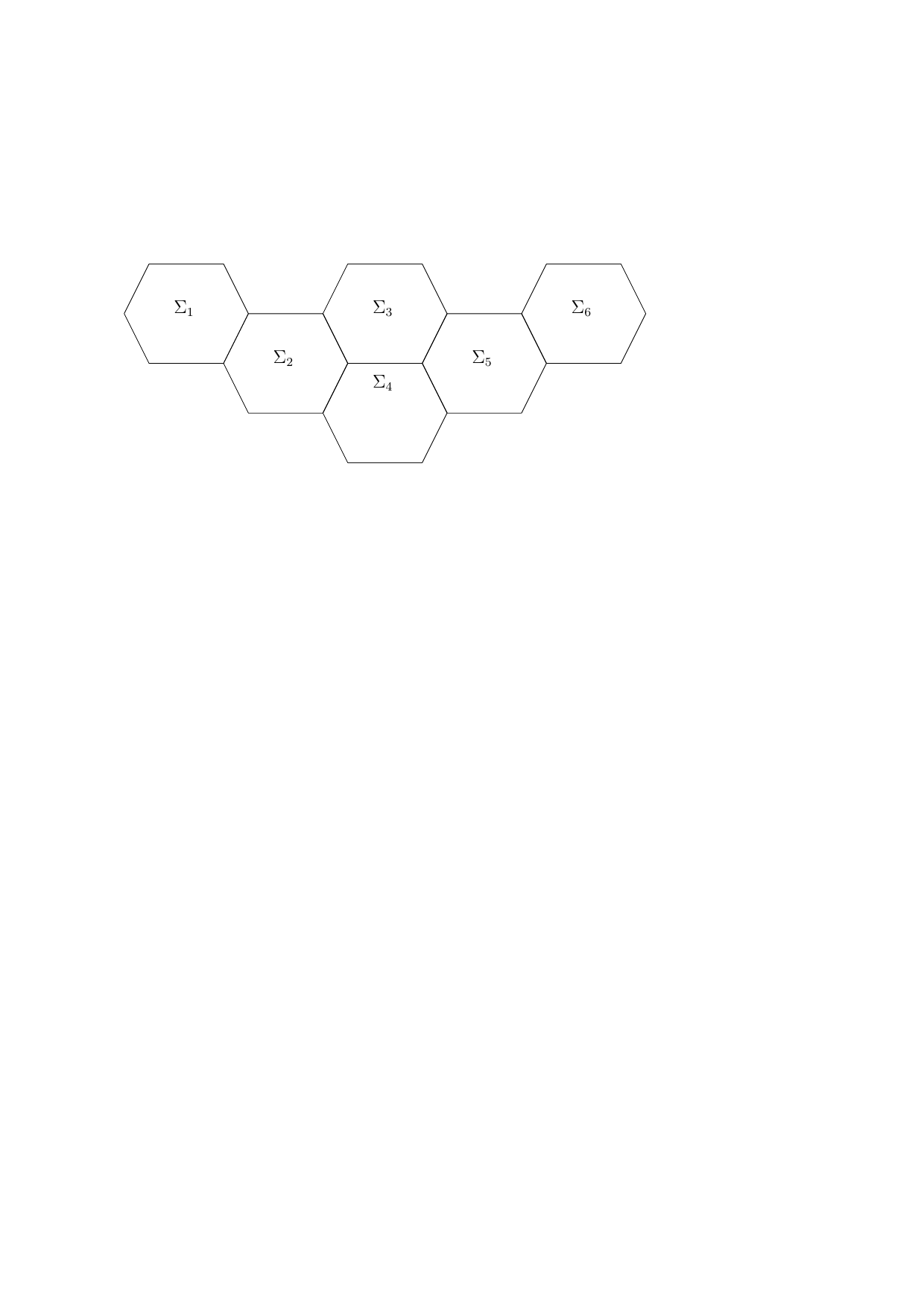}
\caption{A polygonal-cell structure.}
\label{1-fig:cell}
\end{subfigure}
\caption{The schematic illustrations of the polygonal-nest and the polygonal-cell  structures.  }
\end{figure}

%In this section, we consider that each material parameter $q_i,i=1,\dots,N$ with form
%\begin{equation}\label{2eq:qform}
%q_i=q_i^0+q_i^1x_1+q_i^2x_2.
%\end{equation}

\begin{defn}\label{1-def:pi-nest}
Let $(\Omega;q,\eta)$ be a conductive medium scatterer, if it satisfies the following conditions:
\begin{itemize}
\item[(a)] $\Omega$ has a polygonal-nest structure defined in Definition \ref{1-def:nest};
\item[(b)] each $\Sigma_i$ is a medium scatterer such that $\mathcal D_i:=\Sigma_i\setminus\overline{\Sigma_{i+1}}$ with physical configurations $q_i$ and $\eta_i$ (on $\partial \Sigma_i$), which is denoted as $(\mathcal D_i;q_i,\eta_i)$   with $\Sigma_{N+1}=\emptyset$, where $q_i$ is a  polynomial of order $1$ with the form given by
\begin{equation}\label{2eq:qform}
q_i=q_i^{(0)}+q_i^{(1)}x_1+q_i^{(2)}x_2,\quad q_i^{(j)}\in \mathbb C, \quad (j=0,1,2)
\end{equation}
in $\mathcal D_i$, and $\eta_i$ is a constant on $\partial \Sigma_i$ for $i=1,\dots,N$,
\end{itemize}
then we say that $(\Omega;q,\eta)$ possesses a polygonal-nest structure.
\end{defn}

Throughout of the rest paper, we rewrite a conductive polygonal-nest medium $(\Omega;q,\eta)$ given by Definition \ref{1-def:pi-nest} as
\begin{equation}\label{2eq:nest}
(\Omega;q,\eta)=\bigcup_{i=1}^N(\mathcal D_i;q_i,\eta_i)
\end{equation}
and $\Omega= \bigcup_{i=1}^N \mathcal D_i,\ q=\sum_{i=1}^Nq_{i}\chi_{\mathcal D_i},\eta=\sum_{i=1}^N\eta_i\chi_{\partial\Sigma_i}$.

Considering  the direct scattering problem \eqref{eq:sca} associated with  a conductive polygonal-nest medium scatterer $(\Omega;q,\eta)$ described by \eqref{2eq:nest}, let $u\in H^1_{loc}(\mathbb R^2)$ be the corresponding total wave field. In view of the polygonal-nest structure of $(\Omega;q,\eta)$, the  conductive transmission boundary conditions of the total wave $u$ across $\partial \Sigma_{i+1}$ are given by
\begin{equation}\label{2eq:conb}
u_{i}|_{\partial \Sigma_{i+1}}=u_{i+1}|_{\partial \Sigma_{i+1}},\ (\partial_{\nu} u_{i}+\eta_{i+1}u_{i})|_{\partial \Sigma_{i+1}}=\partial _\nu u_{i+1}|_{\partial \Sigma_{i+1}},
\end{equation}
where $u_i=u|_{\mathcal D_{i}},i=0,1,\dots,N-1$ and $\Sigma_{0}=\mathbb R^2\setminus \overline{\Sigma_1}$. Indeed, the well-posedness of the direct scattering  problem can be obtained by using a similar variational argument in \cite{Bon-Liu}, where the detailed proof is omitted. In this paper, our main focus is the corresponding inverse problems associated with  a conductive polygonal-nest  medium scatterer $(\Omega;q,\eta)$, which aims to determine the polygonal-nest structure  of $\Omega$ and the physical configurations  $q_\ell$ and $\eta_\ell$ from the knowledge of the far-field pattern $u^{\infty}(\mathbf {\hat{x}};u^i)$ with a fixed incident wave $u^i$. Here, $u^{\infty}(\mathbf {\hat{x}};u^i)$ is the far-field pattern of $u$ given by \eqref{eq:asy}.  The above inverse problems can be formulated by
 \begin{align}\label{eq:IP 1}
 	 u^{\infty}(\mathbf {\hat{x}};u^i) \rightarrow \bigcup_{\ell=1}^N(\mathcal D_\ell;q_\ell,\eta_\ell),\ \forall \mathbf {\hat x}\in \mathbb S^{1}.
 \end{align}

Similarly, we can define a conductive polygonal-cell medium scatterer $\Omega$ as follows.
\begin{defn}\label{1-def:pi-cell}
Let $(\Omega;q,\eta)$ be a conductive medium scatterer, if the following conditions are fulfilled:
\begin{itemize}
\item[(a)] $\Omega$ has a polygonal-cell structure defined in Definition \ref{1-def:cell};
\item[(b)] each $\Sigma_i$ is a medium scatterer with physical configurations $q_i$ and $\tilde \eta$ and is denoted as $(\Sigma_i;q_i,\tilde\eta)$, $q_i$ is a  polynomial of order $1$ in the form of \eqref{2eq:qform} in $\Sigma_i$ and $\tilde \eta$ is a constant on $\partial \Sigma_i$ for $i=1,\dots,N$,
\end{itemize}
then $(\Omega;q,\eta)$ is said to have a polygonal-cell structure.
\end{defn}
 We  write a conductive  polygonal-cell  medium scatterer $(\Omega;q,\eta)$ as
\begin{align}\label{eq:cell partion}
	(\Omega;q,\eta)=\bigcup_{i=1}^N(\Sigma_i;q_i,\tilde \eta),
\end{align}
where $\Omega=\cup_{i=1}^N\Sigma_{i},\ q=\sum_{i=1}^Nq_i\chi_{\Sigma_i},\ \eta=\sum_{i=1}^N\tilde \eta \chi_{\partial\Sigma_i}.$ The  inverse problems for a conductive polygonal-cell medium scatterer by a single far-field measurement can be described in a similar way as for the case with a conductive polygonal-nest  medium scatterer. Namely, one aims to  determine the shape of $\Omega$ and physical configurations $q_i$ and $\tilde \eta$ through the measurement	 $u^{\infty}(\mathbf {\hat{x}};u^i)$ associated with a fixed incident wave $u^i$, which can be formulated as follows:
\begin{align}\label{eq:IP 2}
 	 u^{\infty}(\mathbf {\hat{x}};u^i) \rightarrow \bigcup_{i=1}^N(\Sigma_i;q_i,\tilde \eta),\ \forall \mathbf {\hat x}\in \mathbb S^{1}.
\end{align}
Here, $u^{\infty}(\mathbf{\hat{x}};u^i)$ is the far-field pattern given by \eqref{eq:asy}, associated with the total wave $u$, where $u$ is the solution of \eqref{eq:sca} corresponding to the conductive polygonal-cell medium scatterer $\bigcup_{i=1}^N(\Sigma_i;q_i,\tilde{\eta})$.

In order to introduce the admissible class for studying the inverse problems of the unique identification of the shape, we need to study the H\"older-regularity of the solution of the coupled PDE system \eqref{lem:31 eq} near the corner, where $u_1$ and $u_2$ are the total wave field corresponding to the two different scatterers $\Omega_1$ and $\Omega_2$, respectively. Indeed, the PDE system \eqref{lem:31 eq} is derived from the proceed of proving the unique determination of the shape of the scatterer by contradiction (see \eqref{eq:312 contract} in the proof of Theorem \ref{2thm:un-shape}). Recall that $\mathcal{S}_{r_0}$ is defined by \eqref{1eq:sr0} with two line segment boundaries $\Gamma^{\pm}_{r_0}$.
We can similarly define $\mathcal{S}_{t r_0}$ for $t \in \mathbb{R}_{+}$.
%Indeed, by contradiction, when we assume that $\mathcal S_{2r_0}\subset (\Omega_1\setminus \overline{ \Omega_2})$, it follows that $u_1$ is real analytic in $\mathcal S_{2r_0}$ by virtue of the elliptic regularity. The following lemma shows that there holds that  $u_1\in C^{1,\alpha}(\overline{\mathcal S_{r_0}})$, where $\alpha\in (0,1)$.}

\begin{lem}\label{2lem:reggularity}
Let the corner $\mathcal S_{2r_0}=\mathcal S \cap B_{2r_0} $, where $\mathcal S$ is given by \eqref{eq:s cor} and $B_{2r_0} $ is a disk centered at the origin with the radius $2r_0$ ($r_0\in \mathbb R_+$). Denote $\Gamma^\pm_{2r_0}=\Gamma^{\pm}\cap B_{2r_0}$, where $\Gamma^{\pm}$ is the boundary line of $\mathcal S$. Suppose that $u_1,u_2\in H^1(\mathcal S_{2r_0})$ satisfy
\begin{equation}\label{lem:31 eq}
\begin{cases}
(\Delta+k^2q)u_1=0\hspace*{1.0cm} \mbox{in}\ \mathcal S_{2r_0},\\
(\Delta+k^2)u_2=0\hspace*{1.20cm} \mbox{in}\ B_{2r_0},\\
u_1=u_2\hspace*{2.4cm} \mbox{on}\ \Gamma_{2r_0}^\pm, %\partial_\nu u_1=\partial _{\nu}u_2+\eta u_2
\end{cases}
\end{equation}
where $k$ is a positive constant and $q$ is in the form of \eqref{2eq:qform}. %If $u_2$ is real analytic in $\mathcal S_{2r_0}$, }
There exists a positive number  $t_0'\in \mathbb R_+$
such that  $u_1\in C^{1,\alpha}(\overline{\mathcal S_{t_0'r_0}})$, where  $\alpha\in(0,1)$ depends on the opening angle of $\mathcal S_{2r_0}$.
\end{lem}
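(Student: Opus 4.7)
The plan is to reduce the lemma to a standard Dirichlet-type boundary value problem on the convex sector $\mathcal S_{2r_0}$, and then extract $C^{1,\alpha}$ regularity from the classical corner regularity theory for the Laplacian on a plane polygon. The coefficient $k^2 q$, being bounded (since $q$ is a linear polynomial on a bounded set), will act only as a lower-order perturbation and will not disturb the leading singular structure at the vertex.

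First, I would upgrade the regularity of $u_2$ by interior elliptic theory. Since $u_2\in H^1(B_{2r_0})$ solves the homogeneous Helmholtz equation $(\Delta+k^2)u_2=0$ in the \emph{full} disk $B_{2r_0}$ with real-analytic coefficients, $u_2$ is in fact real-analytic on $B_{2r_0}$; in particular $u_2\in C^\infty(\overline{\mathcal S_{tr_0}})$ for every $t\in(0,2)$. Setting $w:=u_1-u_2$ and using the two PDEs yields
\begin{equation*}
\Delta w+k^2 q\,w = k^2(1-q)u_2=:f \quad \text{in } \mathcal S_{2r_0}, \qquad w=0 \quad \text{on } \Gamma^{\pm}_{2r_0},
\end{equation*}
with $f$ smooth on $\overline{\mathcal S_{tr_0}}$. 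No boundary condition for $w$ is prescribed on the artificial arc $\Lambda_{2r_0}$, but I would fix $0<t_0'<t_0<2$ so that $\Lambda_{t_0 r_0}$ lies strictly inside $\mathcal S_{2r_0}$; interior elliptic regularity then forces $w$ to be $C^\infty$ in a neighborhood of this arc, so its trace on $\Lambda_{t_0 r_0}$ is smooth as well.

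Next I would invoke the Kondrat'ev/Grisvard corner regularity theory for the resulting genuine mixed-data Dirichlet problem on the convex sector $\mathcal S_{t_0 r_0}$. The opening angle $\omega:=\vartheta_M-\vartheta_m$ lies in $(0,\pi)$, so the leading Dirichlet corner singularity at the vertex $\mathbf 0$ is $r^{\pi/\omega}\sin\bigl(\pi(\vartheta-\vartheta_m)/\omega\bigr)$ with exponent $\pi/\omega>1$. The solution therefore decomposes into a $W^{2,p}$ regular part (which embeds into $C^{1,\alpha}$ for suitable $\alpha$) plus a finite sum of coefficients times the singular functions $r^{n\pi/\omega}\sin\bigl(n\pi(\vartheta-\vartheta_m)/\omega\bigr)$ for $n\ge 1$. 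Each such term has gradient of order $r^{n\pi/\omega-1}$, which is Hölder continuous with exponent $\pi/\omega-1$ (or any exponent less than $1$ when $\pi/\omega\ge 2$); this gives $w\in C^{1,\alpha}(\overline{\mathcal S_{t_0' r_0}})$ with $\alpha$ depending only on $\omega$. Adding back the smooth $u_2$ yields the same conclusion for $u_1$.

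The main obstacle is the absence of prescribed boundary data on the arc $\Lambda_{2r_0}$: the lemma treats $u_1$ as a free $H^1$ solution on the sector, not as the solution of a boundary value problem. The localization step above is the key trick — it transfers the problem to a strictly interior sub-sector whose arc-trace is automatically smooth by interior elliptic regularity, so the classical corner theory applies cleanly. A secondary, routine point is the treatment of the zeroth-order term $k^2 q w$: one either moves it to the right-hand side and bootstraps (from $H^1$ to $L^\infty$ and then to $C^{0,\alpha}$ before the final $C^{1,\alpha}$ step), or invokes the version of the Kondrat'ev/Grisvard theory that already allows bounded zeroth-order coefficients. Neither option changes the final exponent $\alpha$, which is determined purely by the geometric opening angle $\omega$.
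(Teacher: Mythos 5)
Your overall strategy matches the paper's: subtract the analytic $u_2$ to get $w=u_1-u_2$ with homogeneous Dirichlet data on $\Gamma^\pm_{2r_0}$, localize away from the artificial arc, and then read off $C^{1,\alpha}$ regularity at the convex vertex from the fact that the leading Dirichlet singular exponent $\pi/\omega$ exceeds $1$ when $\omega=\vartheta_M-\vartheta_m\in(0,\pi)$. Where you diverge is in the corner machinery. The paper works on a triangle $V_{tr_0}$ with two successive cutoffs: it first applies a polygonal-network regularity theorem of Dauge--Nicaise to get $w\in H^2$ near the vertex, then upgrades the right-hand side to $W^{1,p}$ with $p\in(1,2)$ and invokes Mghazli's decomposition into $\tilde\psi_{\rm reg}\in W^{3,p}$ plus singular terms $r^{\lambda_\ell}\sin(\lambda_\ell\vartheta)$, choosing $p$ so that every $\lambda_\ell>3-2/p$ and hence \emph{no} singular term survives; Sobolev embedding $W^{3,p}\hookrightarrow C^{1,\alpha}$ finishes. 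You instead keep the singular terms in a Kondrat'ev/Grisvard $W^{2,p}$-plus-singularities decomposition and observe directly that each $r^{n\pi/\omega}\sin(n\pi(\vartheta-\vartheta_m)/\omega)$ is itself $C^{1,\pi/\omega-1}$. Your route is shorter and avoids the two-stage bootstrap, but it has one point that must be made precise: for the \emph{regular} part to land in $C^{1,\alpha}$ you need $W^{2,p}$ with $p>2$ (in two dimensions $W^{2,2}$ only embeds into $C^{0,\alpha}$), so you must run the $L^p$ theory with $p>2$ — which is available here since the right-hand side $k^2(1-q)u_2-k^2qw$ lies in every $L^p$, $p<\infty$ — and then check that the singular exponents below the threshold $2-2/p$ are exactly the $n\pi/\omega<2-2/p$, all of which are $>1$. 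A second, minor point: $\Lambda_{t_0r_0}$ does not lie strictly inside $\mathcal S_{2r_0}$ (its endpoints sit on $\Gamma^\pm$), so smoothness of the arc trace near those endpoints requires flat-boundary Dirichlet regularity rather than pure interior regularity; the paper sidesteps this entirely by cutting off to a function vanishing on the whole boundary of a triangle. Neither issue is fatal, but both need to be addressed for a complete write-up.
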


\begin{proof}%[The proof of Lemma \ref{2lem:reggularity}]
%Denote $\overline{u}$ by the analytic extension  of $u^{+}|_{B_{2r_0}\setminus\overline{\mathcal S}_{2r_0}}$ in $B_{2r_0}$ since it is real analytic in
%$B_{2r_0}\setminus\overline{\mathcal S}_{2r_0}$. By virtue of the boundary condition of \eqref{lem:31 eq}, one has
%$\overline{u}=u^{-}=u^{+}\ \mbox{on}\ \Gamma_{2r_0}^\pm.$
{
For any given $t \in \mathbb{R}_{+}$, let $\mathbf{y}_{t,+} \in \Gamma^{+}$ and
$\mathbf{y}_{t,-} \in \Gamma^{-}$ be points satisfying $|\mathbf{y}_{t,+}| =
|\mathbf{y}_{t,-}| = t$, where $\Gamma^{+}$ and $\Gamma^{-}$ are defined by
\eqref{eq:gamma+} and \eqref{eq:gamma-}, respectively. For any $t \in \mathbb{R}_{+}$, we introduce the triangle $V_{t r_0}$ formed by the three line segments connecting  $\mathbf{0}$, $\mathbf{y}_{t,+}$, and $\mathbf{y}_{t,-}$.

Let $\varphi\in C^\infty({\mathcal S_{2r_0}})$ be a smooth cutoff function satisfying that $\varphi\equiv1$ in ${\mathcal S_{t_0r_0}}$ and $\varphi\equiv 0$ in $\mathcal S_{2 r_0}  \setminus\overline{ \mathcal S_{t_0r_0}}$, where $t_0\in (0,2)$ such that $\mathcal S_{t_0 r_0} \Subset V_{2r_0}$. Let $w=u_1-u_2$ and define $\tilde{\varphi}=w\varphi$. Then there holds that
$$
\Delta \tilde{\varphi}=f\ \mbox{in}\   V_{2r_0},\quad
\tilde{\varphi}=0\ \mbox{on}\  \partial {V_{2{r_0}}},
$$
where
$f=\Delta w\varphi+w\Delta\varphi +2\nabla w\cdot \nabla \varphi
=(k^2u_2-k^2qu_1)\varphi+ w\Delta\varphi +2\nabla w\cdot \nabla \varphi\notag.$    Since $u_1\in H^1(\mathcal S_{2r_0})$ and $u_2$ is analytic in $\mathcal S_{2r_0}$, it yields that  $w\in H^1(\mathcal S_{2r_0})$, $\nabla w\in L^2(\mathcal S_{2r_0})$ and $f\in L^2(\mathcal S_{2r_0})$. Moreover, it is noted that the opening angle of the sector $\vartheta_M-\vartheta_m\in (0,\pi)$ and $\tilde {\varphi}\in H^1_0( V_{2r_0})$. Applying  \cite[Theorem 10.28]{DN} to the case of the polygonal network consists of only one subset $V_{2r_0}$,  then we have $\tilde{\varphi}\in H^2(V_{2r_0})$. Consequently, we have  $w\in H^2(\mathcal S_{t_0 r_0})$ since $\varphi\equiv1$ in $\mathcal S_{t_0r_0}$. Therefore, we conclude that $u_1\in H^2(\mathcal S_{t_0 r_0})$.

Now we focus on the triangular domain \( V_{t_0 r_0} \). Let \( \psi \in C^\infty(\mathcal{S}_{t_0 r_0}) \) be a smooth cutoff function satisfying:
\[
\psi \equiv 1 \text{ in } \mathcal{S}_{t_0' r_0} \quad \text{and} \quad \psi \equiv 0 \text{ in } \mathcal{S}_{t_0 r_0} \setminus \mathcal{S}_{t_0' r_0},
\]
where \( t_0' \) is fixed such that \( 0 < t_0' < t_0 \) and \( \mathcal{S}_{t_0' r_0} \Subset V_{t_0 r_0} \).  Similarly, denote $\tilde{\psi}=w\psi$,  we then have
\begin{align}
\Delta \tilde{\psi}=g\ \mbox{in}\ V_{t_0 r_0},\quad \tilde{\psi}=0\ \mbox{on}\ \partial V_{t_0 r_0},\label{eq:tildepsi}
\end{align}
where
$
g=(k^2u_2-k^2qu_1)\psi+ w\Delta\psi +2\nabla w\cdot \nabla \psi.
$
Recall that   $u_1\in H^2 (\mathcal S_{t_0 r_0}) $,  one has $w\in H^2(V_{t_0 r_0} )$ and $\nabla w\in H^1(V_{t_0 r_0})$, which indicates that  $g\in H^1(V_{t_0 r_0})$. According to \cite[Theorem 2.8]{AD},  we have $g\in W^{1,p}(V_{t_0 r_0}),\ 1<p<2$.
Moreover,   from \cite[Theorem 2.1]{MZ92}, we know that  $\tilde {\psi}$ satisfying \eqref{eq:tildepsi} admits the   decomposition  as follows:
\begin{equation}\notag%\label{2eq:ualpha}
\tilde {\psi}({\bf x})=\tilde{\psi}_{\rm reg}({\bf x})+\sum_{0<\lambda_{\ell }<3-\frac{2}{p}} \zeta_\ell r^{\lambda_{\ell }}\sin(\lambda_{\ell }\vartheta),\ \tilde {\psi}_{\rm reg} \in W^{3,p}(V_{2r_0}),\ {\bf x}=r(\cos \vartheta, \sin  \vartheta),
\end{equation}
where $\zeta$ is a constant depending only on $f$ and $\varphi$, and $\lambda_{\ell }=\frac{\ell \pi}{\vartheta_M-\vartheta_m},\ \ell\in \mathbb N_{+}$.
If $\vartheta_M-\vartheta_m\in (0,\pi/2]$, then we know that $\tilde {\psi}({\bf x})=\tilde {\psi}_{\rm reg}({\bf x})$ since $\lambda_\ell\geq 2\ell>3-\frac{2}{p}$. If $\vartheta_M-\vartheta_m\in (\pi/2,\pi )$, we then choose  $p\in(1,\frac{2}{3-\frac{\pi}{\vartheta_M-\vartheta_m}})$ since it can be verified that $\frac{2}{3-\frac{\pi}{\vartheta_M-\vartheta_m}}\in (1,2)$, which also  indicates that $\lambda_\ell>3-\frac{2}{p}$ for any $\ell \in \mathbb N_+$. Therefore,  $\tilde {\psi}=\tilde {\psi}_{\rm reg}\in W^{3,p}(V_{t_0 r_0})$ for $p\in (1,2)$. Recall that $\psi\equiv 1$ in $\mathcal S_{t_0' r_0}$, then we deduce that $w\in W^{3,p}(\mathcal S_{t_0' r_0})$ which implies  that $u_1\in W^{3,p}(\mathcal S_{t_0' r_0})$ since $u_2$ is analytic in $\mathcal S_{t_0' r_0}$. It follows from  the Sobolev embedding theorem (cf. \cite[Theorem 7.26]{GT-ell}), we have that $u_1\in C^{1,\alpha}(\overline {\mathcal S_{t_0' r_0}})$ with  $\alpha\in(0,1)$, where $\alpha$ depends on the opening angle of the sector $\mathcal S_{2r_0}$.

}

The proof is complete.
\end{proof}

%Then, the last definition in this paper is given, namely admissibility. The following definition is an essential prior information for us to establish recovery uniqueness results in the context of inverse problems.

In the following, we will introduce the admissible class for studying the inverse problems \eqref{eq:IP 1} and \eqref{eq:IP 2}, which will be used to establish recovery uniqueness results in the subsequent theorems.

%The material parameter $q$ is defined by \eqref{2eq:qform}.

\begin{defn}\label{2def:adm}

Suppose $(\Omega;q,\eta)$ represents a conductive polygonal-nest or polygonal-cell medium scatterer as defined in  Definitions \ref{1-def:pi-nest} or \ref{1-def:pi-cell}. Let $u$ denote the total wave field of  \eqref{eq:sca} associated with  $(\Omega;q,\eta)$.

\begin{itemize}

\item Polygonal-Nest Admissibility:

\begin{enumerate}
	
%\begin{equation}\label{2eq:admissible}
%|u(\mathbf{x}_{p})|^2+|\nabla u(\mathbf{x}_{p})|^2\neq 0,
%\end{equation}
\item For a  conductive polygonal-nest medium scatterer $(\Omega;q,\eta)$, it is considered admissible if it satisfies one of the following conditions:
\begin{itemize}
\item[(I)]
		{\begin{equation}\label{2eq:ad-1}
      \lim_{\rho\to +0}\frac{1}{m( B(\mathbf x_p,\rho))}\int_{B(\mathbf x_p,\rho)} \vert u(\mathbf x)\vert \mathrm d\mathbf x\not =0,
\end{equation}}
\item[(II)]{if
 \begin{align}
 \lim_{\rho\to +0}\frac{1}{m( B(\mathbf x_p,\rho))}\int_{B(\mathbf x_p,\rho)} \vert u(\mathbf x)\vert\mathrm d\mathbf x =0,\  \mbox{then}\notag\\
 \lim_{\rho\to +0}\frac{1}{m( B(\mathbf x_p,\rho))}\int_{B(\mathbf x_p,\rho)} \vert \nabla u(\mathbf x)\vert \mathrm d\mathbf x\neq0,   \label{2eq:ad-2}
 \end{align}}
 \end{itemize}
{where $m(B(\mathbf x_p,\rho))$ is the measure of $B(\mathbf x_p,\rho)$,}
%\item[b)] $\partial_1u (\mathbf x_p)\not=0$, $u(\mathbf x_p)=\partial_2 u (\mathbf x_p)=0$,
%		\item[c)]  $\partial_2u (\mathbf x_p)\not=0,$ $u (\mathbf x_p)=\partial_1u(\mathbf x_p)=0$,
 $\mathbf{x}_{p} \in {\mathcal{V}}( \Sigma_i)$ and ${\mathcal V}( \Sigma_i)=\{\mathbf x_{p,i}\}_{p=1}^{\ell_i} (3\leq\ell_{i}\in \mathbb Z_{+})$ is the vertex set of the convex polygon $\Sigma_i$. Here, $\{\Sigma_i\}_{i=1}^N$ is the polygonal-nest structure of $\Omega$ described by Definition \ref{1-def:nest}.

\item If the condition \eqref{2eq:ad-2} is satisfied, it is additionally   {required that $\angle( \Gamma_{p}^{-},\Gamma_{p}^{+})\in (0,\pi)\setminus \{\frac{\pi}{2}\}$, }where $\Gamma_{p}^{\pm}$ represent two edges of $\Sigma_i$, and $\Gamma_{p}^{-}\cap\Gamma_{p}^{+}=\mathbf{x}_{p}$ with $\angle( \Gamma_{p}^{-},\Gamma_{p}^{+})$ being the intersection angle between $\Gamma_{p}^{-}$ and $\Gamma_{p}^{+}$.
\end{enumerate}

\item Polygonal-Cell Admissibility:
\begin{enumerate}

\item
For a conductive polygonal-cell medium scatterer $(\Omega;q,\eta)$, it is considered admissible if it satisfies condition \eqref{2eq:ad-1} or \eqref{2eq:ad-2}, where $\mathbf{x}_{p}\in {\mathcal{V}}( \Omega ) $. Here, ${\mathcal{V}}( \Omega ) $ represents the vertex set of the polygon $\Omega$.

\item Similarly, if the condition \eqref{2eq:ad-2} is satisfied, it is additionally   required that {$\angle( \Gamma_{p}^{-},\Gamma_{p}^{+})\in (0,\pi)\setminus \{\frac{\pi}{2}\}$}, where $\Gamma_{p}^{\pm}$ represent two edges of $\Omega$, and $\Gamma_{p}^{-}\cap\Gamma_{p}^{+}=\mathbf{x}_{p}$ with $\angle( \Gamma_{p}^{-},\Gamma_{p}^{+})$ being the intersection angle between $\Gamma_{p}^{-}$ and $\Gamma_{p}^{+}$.
\end{enumerate}
\end{itemize}

\end{defn}

\begin{rem}\label{rem31}

The non-void assumptions \eqref{2eq:ad-1} and \eqref{2eq:ad-2} in Definition \ref{2def:adm} represent generic physical conditions. Indeed, the generic assumptions \eqref{2eq:ad-1} and \eqref{2eq:ad-2}  can be fulfilled in certain physical scenarios. For example, when
\begin{align}\label{eq:ad k diam}
k \cdot {\rm diam}(\Omega) \ll 1,
\end{align}
from a physical perspective, the scattered wave $u^s$ can be neglected. In fact, as rigorously justified in \cite{Bon-Liu}, it holds that
$$ \|u^s\|_{H^1(B_R)}\leq C(\|\eta u^i\|_{H^{-\frac{1}{2}}(\partial \Omega)}+k^2\|qu^i\|_{L^2(\Omega)}),\quad \Omega \subset B_R, %=\mathcal O(k^2)\|u^i\|_{L^2(\Omega)}
$$
where $C$ is a positive number and $B_R$ is a ball centered at the origin with radius $R$. Hence, if \eqref{eq:ad k diam} is satisfied and the conductive boundary parameter $\eta$ is sufficiently small, the incident wave $u^i$ dominates the total wave $u=u^i+u^s$. It is evident that the  incident wave $u^i$ fulfills \eqref{2eq:ad-1} (or \eqref{2eq:ad-2}) , which implies that the total wave $u$ generically satisfies \eqref{2eq:ad-1} or \eqref{2eq:ad-2}. On the other hand, from a physical standpoint, in conductive medium scattering, if the total wave does not satisfies \eqref{2eq:ad-1} at a corner point, it implies that the scattered wave and the incident wave cancel each other at those points. Similarly, if the admissible condition \eqref{2eq:ad-2} is violated,   it indicates that either the scattered wave and the incident wave cancel each other, or the gradient of the scattered wave and the incident wave cancel each other at those points. Such cancellation phenomena, as described above, are rare in typical scattering scenarios. However, we will not delve deeply into this matter in this paper.

\end{rem}

\begin{rem}\label{rem32}
	In previous works such as \cite{DCL21, CDL20}, the admissible condition \eqref{2eq:ad-1} plays a crucial role in deriving corresponding uniqueness results. However, in this paper, when \eqref{2eq:ad-1} is violated, we propose another admissible condition \eqref{2eq:ad-2} {to derive the unique determination of the shape of the conductive medium scatterer.}  There are three distinct cases for \eqref{2eq:ad-2}, characterized as follows:
\begin{itemize}
{
\item[a)] $\lim_{\rho\to +0}\frac{1}{m( B(\mathbf x_p,\rho))}\int_{B(\mathbf x_p,\rho)} \vert \partial_1u(\mathbf{x})\vert \mathrm d\mathbf x\neq 0$, \\
    $\lim_{\rho\to +0}\frac{1}{m( B(\mathbf x_p,\rho))}\int_{B(\mathbf x_p,\rho)} \vert u(\mathbf{x})\vert \mathrm d\mathbf x=\lim_{\rho\to +0}\frac{1}{m( B(\mathbf x_p,\rho))}\int_{B(\mathbf x_p,\rho)} \vert \partial_2u(\mathbf{x}) \vert \mathrm d\mathbf x =0$,
\item[b)] $\lim_{\rho\to +0}\frac{1}{m( B(\mathbf x_p,\rho))}\int_{B(\mathbf x_p,\rho)}\vert \partial_2u(\mathbf{x}) \vert \mathrm d\mathbf x\neq 0$, \\
    $\lim_{\rho\to +0}\frac{1}{m( B(\mathbf x_p,\rho))}\int_{B(\mathbf x_p,\rho)}\vert u(\mathbf{x}) \vert \mathrm d\mathbf x=\lim_{\rho\to +0}\frac{1}{m( B(\mathbf x_p,\rho))}\int_{B(\mathbf x_p,\rho)}\vert \partial_1u(\mathbf{x})\vert \mathrm d\mathbf x=0$,
\item[c)] $\lim_{\rho\to +0}\frac{1}{m( B(\mathbf x_p,\rho))}\int_{B(\mathbf x_p,\rho)}\vert  \partial_1u(\mathbf{x})\vert \mathrm d\mathbf x\neq 0$,\\
     $\lim_{\rho\to +0}\frac{1}{m( B(\mathbf x_p,\rho))}\int_{B(\mathbf x_p,\rho)}\vert \partial_2u(\mathbf{x})\vert \mathrm d\mathbf x\neq 0$, $\lim_{\rho\to +0}\frac{1}{m( B(\mathbf x_p,\rho))}\int_{B(\mathbf x_p,\rho)}\vert u(\mathbf{x})\vert\mathrm d\mathbf x =0$.}
\end{itemize}
Since \eqref{2eq:ad-2} encompasses more cases compared to \eqref{2eq:ad-1}, it is more likely to be satisfied in practical scenarios. This increased applicability of \eqref{2eq:ad-2} enhances the practical relevance of our uniqueness results.
\end{rem}

%\begin{rem}\label{rem:four cases}
%	There are four distinct cases for the admissible conditions \eqref{2eq:ad-1} or \eqref{2eq:ad-2}, which are as follows:
%	\begin{itemize}
%		\item[a)] $u(\mathbf x_p)\not =0$,
%		\item[b)] $\partial_1u (\mathbf x_p)\not=0$, $u(\mathbf x_p)=\partial_2 u (\mathbf x_p)=0$,
%		\item[c)]  $\partial_2u (\mathbf x_p)\not=0,$ $u (\mathbf x_p)=\partial_1u(\mathbf x_p)=0$,
%		\item[d)] $\partial_1u(\mathbf x_p)\not=0$, $\partial_2u(\mathbf x_p)\not=0$, $u(\mathbf x_p)=0$.
%	\end{itemize}
%%For case d), we require additional a-priori information regarding the intersecting angle of $\mathbf{x}_p$ in Definition \ref{2def:adm}.
%\end{rem}

%Now, we are going to present the main unique determination results for the geometric structure. The proofs of the following theorems and the lemma are postponed to Section \ref{sec:4}. First, we establish the unique identification result regarding to the shape of a conductive medium scatterer $\Omega$ in Theorem \ref{2thm:un-shape} when $\Omega$ is an admissible polygonal-nest or polygonal-cell conductive medium scatterer by a single far-field measurement.

In Theorem \ref{2thm:un-shape}, a local uniqueness result for determining the shape of an admissible conductive polygonal-nest or polygonal-cell medium scatterer by a single far-field measurement is established. Furthermore, when the admissible scatterer has a polygonal-nest structure, a global shape identifiability by a single far-field measurement is obtained.

\begin{thm}\label{2thm:un-shape}
Let $(\Omega_\ell;q_\ell,\eta_\ell),\ell=1,2$ be both admissible conductive polygonal-nest or both polygonal-cell medium scatterers satisfying \eqref{eq:sca}, $u^\infty_\ell(\mathbf {\hat{x}};u^i)$ be the far-field pattern associated with the scatterers $(\Omega_\ell;q_\ell,\eta_\ell)$ and the incident field $u^i$. If
$$
u^\infty_1(\mathbf {\hat{x}};u^i)=u^\infty_2(\mathbf {\hat{x}};u^i)
$$
is fulfilled for all $\mathbf {\hat {x}}\in \mathbb S^1$ and a fixed incident wave $u^i$, then {the difference $\Omega_1\aleph\Omega_2$ defined in Corollary \ref{thm:uni pre}} cannot have a convex corner. Moreover, if $\Omega_1$ and $\Omega_2$ are both admissible conductive polygonal-nest medium scatterers, one holds that
$$\partial \Omega_1=\partial \Omega_2.$$
\end{thm}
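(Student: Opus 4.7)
The plan is to reduce the shape identifiability to a local question at a convex corner and to attack it by contradiction. Assume that $u^\infty_1(\hat{\mathbf{x}}; u^i) = u^\infty_2(\hat{\mathbf{x}}; u^i)$ but that $\Omega_1 \aleph \Omega_2$ contains a convex corner. After a rigid motion, I may place this corner at $\mathbf{0}$ and assume that, for sufficiently small $r_0$, the truncated sector $\mathcal{S}_{r_0}$ from \eqref{1eq:sr0} satisfies $\mathcal{S}_{r_0} \Subset \Omega_1 \setminus \overline{\Omega_2}$ (or the symmetric situation). By Rellich's lemma, the total waves satisfy $u_1 = u_2$ in $B_{r_0} \setminus \overline{\mathcal{S}_{r_0}}$. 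Combining this with the conductive transmission conditions across $\partial \Omega_1$ at the corner yields exactly the coupled system \eqref{lem:31 eq}, namely $(\Delta + k^2 q_1) u_1 = 0$ in $\mathcal{S}_{r_0}$, $(\Delta + k^2) u_2 = 0$ in $B_{r_0}$, together with $u_1 = u_2$ and $\partial_\nu u_1 = \partial_\nu u_2 + \eta u_2$ on $\Gamma_{r_0}^\pm$, with the boundary constant $\eta$ inherited from the scatterer hitting the corner. Lemma \ref{2lem:reggularity} then guarantees $u_1 \in C^{1,\alpha}(\overline{\mathcal{S}_{t_0' r_0}})$, so that boundary traces and normal derivatives of $u_1$ at the corner are well defined.

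The next step is to mimic the CGO strategy used in the proof of Theorem \ref{thm:ucp}. Extending $q_1$ as an $H^2$ function to $\mathbb{R}^2$, Lemma \ref{1-lem:cgo} supplies a CGO solution $u_0 = (1+\psi)e^{\rho \cdot \mathbf{x}}$ with $\rho = -\tau(\mathbf{d} + \mathrm{i}\mathbf{d}^\perp)$, where $\mathbf{d}$ satisfies \eqref{eq:d assumption} for $\mathcal{S}_{r_0}$. Apply Green's formula to $u := u_1 - u_2$ and $u_0$ over $\mathcal{S}_{r_0}$, using the boundary data $u = 0$, $\partial_\nu u = \eta u_2$ on $\Gamma_{r_0}^\pm$. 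The contribution over the circular arc $\Lambda_{r_0}$ is exponentially small in $\tau$ by \eqref{1-eq:someest}, the interior source term and the $\psi$-boundary term are controlled by Proposition \ref{1-prop:some est}, and the leading asymptotic reduces to $\eta \int_{\Gamma_{r_0}^\pm} u_2 \, e^{\rho \cdot \mathbf{x}} d\sigma$. Since $u_2$ is analytic near $\mathbf{0}$, Taylor expand $u_2(\mathbf{x}) = u_2(\mathbf{0}) + \nabla u_2(\mathbf{0}) \cdot \mathbf{x} + O(|\mathbf{x}|^2)$ and plug this expansion into the integral identity.

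In admissible case \eqref{2eq:ad-1}, the order $\tau^{-1}$ term of the identity, after testing with the two admissible choices \eqref{1-eq:d1}--\eqref{1-eq:d2} of $\mathbf{d}^\perp$, produces a $2\times 2$ homogeneous linear system for $u_2(\mathbf{0})$ whose determinant is of the form $2(1 + \cos(\vartheta_M - \vartheta_m))$, nonzero for any convex corner; this forces $u_2(\mathbf{0}) = 0$ and contradicts \eqref{2eq:ad-1}. When \eqref{2eq:ad-1} fails, so that $u_2(\mathbf{0}) = 0$, I push the asymptotic analysis one order further and the next-order coefficient matrix, analogous to the matrix $B$ appearing in the induction step of the proof of Theorem \ref{thm:ucp} with $\ell = 0$, has determinant proportional to $\cos(\vartheta_M - \vartheta_m) - \cos(3(\vartheta_M - \vartheta_m))$, which is nonzero precisely when $\vartheta_M - \vartheta_m \ne \pi/2$; this is exactly the additional restriction in Definition \ref{2def:adm} under \eqref{2eq:ad-2}. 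Hence $\nabla u_2(\mathbf{0}) = \mathbf{0}$, contradicting \eqref{2eq:ad-2}. The main technical obstacle will be the careful bookkeeping of the sub-leading CGO asymptotics in this gradient case, since the two components of $\nabla u_2(\mathbf{0})$ are coupled through the $e^{\pm \mathrm{i} \vartheta_{m/M}}$ factors produced by expanding $u_2 \cdot \mathbf{x}$ along each edge and one must cleanly isolate the $\tau^{-2}$ balance from contributions of $\mathcal{G}_1, \mathcal{G}_2, \mathcal{G}_3$ type.

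For the global polygonal-nest conclusion, note that the outer polygons $\Omega_\ell = \Sigma_{1,\ell}$ are themselves convex. If $\partial \Omega_1 \neq \partial \Omega_2$, then elementary convex geometry forces the symmetric difference $\Omega_1 \aleph \Omega_2$ to contain at least one convex corner of either $\partial \Omega_1$ or $\partial \Omega_2$ lying outside the closure of the other domain. Since the scatterers are admissible at all such corners by Definition \ref{2def:adm}, the local result just established produces the required contradiction, and therefore $\partial \Omega_1 = \partial \Omega_2$.
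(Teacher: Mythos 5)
Your proposal is correct and follows essentially the same route as the paper's proof: contradiction via Rellich's lemma, localization of the transmission system at the corner, the $C^{1,\alpha}$ regularity from Lemma \ref{2lem:reggularity}, the CGO/Green's-identity integral identity, and the $\tau\to\infty$ asymptotics with the two choices of $\mathbf d^\perp$ yielding non-degenerate coefficient matrices under the angle restriction $\vartheta_M-\vartheta_m\neq\pi/2$. The only differences are cosmetic: you expand the analytic $u_2$ rather than the $C^{1,\alpha}$ interior trace (they agree on $\Gamma_{r_0}^\pm$), and you package the gradient case as a single complex $2\times2$ system with determinant proportional to $\cos\beta-\cos 3\beta=4\sin^2\beta\cos\beta$, which is exactly (up to a factor) the determinant $-2\sin^2\beta\cos\beta$ the paper obtains after splitting into its Cases (b)--(d) and separating real and imaginary parts.
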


%\begin{rem}
%If we assume that $\eta\in C^\alpha(\overline{\Gamma_{r_0}^{\pm}})$, $\alpha\in(0,1)$ with $\eta(\mathbf 0)\not=0$, then we can also establish the determination of shape results. The proof can be obtained by modifying one for Theorem \ref{2thm:un-shape}, so we do not give a more detailed description.
%\end{rem}

%Theorem \ref{2thm:un-qeta} presents the main result for the piecewise scattering potential  $q$ with form \eqref{2eq:form-q} and material parameter $\eta$ by a single far-field measurement with two incident directions when $\Omega$ is an admissible polygonal-nest or a polygonal-nest medium scatterer.

%Next, we present the main global uniqueness results for the potential $q$ and the boundary impedance parameter $\eta$ of $\Omega$ by a single far-field measurement when the scatterer is an admissible polygonal-nest or polygonal-cell scatterer. Here, we need the a-prior knowledge on the partition of  the admissible polygonal-cell conductive medium scatterer to establish the unique recovery results of the material parameters.

\begin{proof}
We only present the proof for the polygonal-nest case, since the polygonal-cell case can be proven in a similar manner. We proceed to prove this theorem by contradiction. Let us assume that there exists a corner in {$\Omega_1\aleph\Omega_2$}. According to \eqref{2eq:nest}, we can express the admissible conductive polygonal-nest medium scatterers as $(\Omega_\ell;q_\ell,\eta_\ell)$ ($\ell=1,2$) as follows
\begin{equation}\notag%\label{2eq:nest1}
(\Omega_\ell;q_\ell,\eta_\ell)=\bigcup_{i=1}^{N_\ell}(\mathcal D_{i,\ell};q_{i,\ell},\eta_{i,\ell}),
\end{equation}
and
\begin{equation}\label{2eq:nest2}
\Omega_\ell=\bigcup_{i=1}^{N_\ell}\mathcal D_{i,\ell},\ q_j=\sum_{i=1}^{N_\ell}q_{i,\ell}\chi_{\mathcal D_{i,\ell}},\ \eta_{\ell}=\sum_{i=1}^{N_\ell}\eta_{i,\ell}\chi_{\partial \Sigma_{i,\ell}}.
\end{equation}
Since $-\Delta$ is invariant under rigid motion,  without loss of generality, let us assume that $\bf 0$ is the vertex of the corner $\Omega_1 \cap \mathcal{S}$ such that $\mathbf 0 \in \partial \Omega_1$ but $\mathbf 0 \notin \partial \overline{\Omega_2}$, where $\mathbf{0}$ represents the origin. Moreover, we assume that the corner $\Omega_1 \cap \mathcal{S} = \mathcal{S}_{r_0} \Subset \Sigma_{1,1}$, where $\Sigma_{1,1}$ is defined in Definition \ref{1-def:nest} and \eqref{2eq:nest2}.

Let $u_1(\mathbf x)$ and $u_2(\mathbf x)$ be the total wave fields associated with $\Omega_1$ and $\Omega_2$, respectively. Given   $u^{\infty}_1=u_2^{\infty}$ for all $\mathbf {\hat{x}}\in \mathbb S^1$ and by Rellich lemma,  we conclude that  $u^s_1=u^s_2$ in $\mathbb R^2\setminus (\overline {\Omega_1\cup \Omega_2})$. This implies  that
$u_1(\mathbf x)=u_2(\mathbf x)$ for all $\mathbf {\hat{x}}\in  \mathbb R^2\setminus (\overline {\Omega_1\cup \Omega_2})$. Then  $u_1^{-}$ and $u_2^{+}$ satisfy the following PDE system,
\begin{equation}\label{eq:312 contract}
\begin{cases}
\Delta u_1^{-}+k^2q_1u_1^{-}=0,\hspace*{2.6cm}\mbox{in}\ \mathcal S_{r_0},\\
\Delta u_2^{+}+k^2u_2^{+}=0,\hspace*{2.95cm}\mbox{in}\ \mathcal S_{r_0},\\
u_1^{-}=u_2^{+},\ \partial_{\nu}u_1^{-}=\partial_\nu u_2^{+}+\eta_1u_2^{+},\hspace*{0.5cm}\mbox{on}\ \Gamma_{r_0}^\pm.
\end{cases}
\end{equation}
Let $\bar{u}=u_1^{-}-u_2^{+}$. Then it yields that
\begin{equation}\notag
\begin{cases}
\Delta \bar{u}+k^2q_1 \bar{u}=k^2(1-q_1)u_2^{+}, \hspace*{0.5cm}\mbox{in}\ \mathcal S_{r_0},\\
\bar{u}=0,\ \partial_\nu\bar{u}=\eta_1{ u_1^{-}}, \hspace*{1.95cm}\mbox{on}\ \Gamma_{r_0}^{\pm}.
\end{cases}
\end{equation}
Since $q_1\in H^2(\mathcal S_{r_0})$, let $\widetilde q_1$ be the Sobolev extension of $q_1$ in $\mathbb R^2$. According to Lemma \ref{1-lem:cgo}, there exists the CGO solution $u_0$ given by \eqref{1-eq:cgo}, which satisfies \eqref{eq:cgo par}. Hence it yields that
\begin{equation}\notag %\label{eq:cgo par1}
(\Delta+k^2 q_1)u_0=0\mbox{ in } \mathcal S_{r_0}.
\end{equation}

By virtue of the CGO solution defined in \eqref{1-eq:cgo}, using Green's formula, one has
\begin{align}
\eta_1\int_{\Gamma_{r_0}^\pm}u_1^{-}u_0\mathrm d\mathbf x=k^2\int_{\mathcal S_{r_0}}(1-q_1)u_1^{-}u_0\mathrm d\mathbf x+\int_{\Lambda_{r_0}}\partial_{\nu}u_0\bar{u}-\partial_\nu\bar{u}u_0\mathrm d\sigma.\label{2eq:baru1}
\end{align}
According to Lemma \ref{2lem:reggularity}, there exists a positive number $t_0'\in \mathbb R_+$ such that  $u_1^{-}\in C^{1,\alpha}(\overline{\mathcal S_{t_0'r_0}}),\alpha\in(0,1)$, which implies the following expansion
\begin{equation}\label{2eq:uext}
u_1^{-}=u_1^{-}(\mathbf 0)+\partial_1 u_1^{-}(\mathbf 0) x_1+\partial_2u_1^{-}(\mathbf 0)x_2 +\delta u_1^{-},\ \vert \delta u_1^{-}\vert\leq \vert \mathbf x\vert ^{1+\alpha }\|u_2^{+}\|_{C^{1,\alpha}}.
\end{equation}

In the subsequent analysis, we need to establish an integral identity in $\mathcal{S}_{t_0'r_0}$ that is analogous to \eqref{2eq:baru1}. However, our analysis indicates that the parameter $t_0'$ defining $\mathcal{S}_{t_0'r_0}$ does not play a crucial role in proving this theorem. Therefore, we will instead use \eqref{2eq:baru1} for our further analysis.

According to the admissibility conditions in Definition \ref{2def:adm}, we will divide the proof into four cases, as follows.

\medskip
\noindent\textbf{Case (a)}: $u_1^{-}(\mathbf 0)\not =0$. Combing \eqref{2eq:uext} with \eqref{2eq:baru1}, we can deduce that
\begin{align}
\eta_1u_1^{-}(\mathbf 0)&\int_{\Gamma_{r_0}^\pm}e^{\rho\cdot \mathbf x}\mathrm d\mathbf x=\sum_{i=1}^4I_i,\label{2eq:u-0}
\end{align}
where
\begin{align}
I_1&=-\eta_1u_1^{-}(\mathbf 0)\int_{\Gamma_{r_0}^\pm}\psi(\mathbf x)e^{\rho\cdot \mathbf x}\mathrm d\sigma,\ I_2=k^2\int_{\mathcal S_{r_0}}(1-q_1)u_1^{-}u_0\mathrm d\mathbf x
\notag\\
I_3&=-\eta_1\int_{\Gamma_{r_0}^\pm}(\partial_1 u_1^{-}(\mathbf 0) x_1+\partial_2u_1^{-}(\mathbf 0)x_2 + \delta u_1^{-})u_0\mathrm d\sigma,\notag\\ I_4&=\int_{\Lambda_{r_0}}\partial_{\nu}u_0\bar{u}-\partial_\nu\bar{u}u_0\mathrm d\sigma.\label{2eq:u-01}
\end{align}
By \eqref{1-eq:someest}, we have
\begin{align}
\vert I_1\vert &\lesssim \tau^{-\frac{7}{6}}+\mathcal O(\tau^{-\frac{7}{6}}e^{-\frac{1}{2}\varsigma r_0\tau}),\ \vert I_2\vert \lesssim \tau^{-2}+\tau^{-\frac{29}{12}}+\mathcal O(\tau^{-1}e^{-\frac{1}{2}\varsigma\tau}),\notag\\
\vert I_3\vert &\lesssim\tau^{-2}+\tau^{-(2+\alpha )}+\mathcal O(\tau^{-1}e^{-\frac{1}{2}\varsigma r_0\tau}),\ \vert I_4\vert \lesssim (1+\tau)(1+\tau^{-\frac{2}{3}})e^{-\varsigma r_0\tau}.\label{2eq:u-02}
\end{align}
Let $\mathbf d$ be given in the form of \eqref{1-eq:d2}. We can then directly derive that
\begin{align}
\vert \eta_1u_1^{-}(\mathbf 0)\vert&\left\vert \int_{\Gamma_{r_0}^\pm} e^{\rho\cdot\mathbf x}\mathrm d\sigma \right\vert \gtrsim
\frac{\Gamma(1)\vert \eta_1u_2^{+}(\mathbf 0)\vert\vert e^{\mathrm i\vartheta_m}+e^{\mathrm i\vartheta_M}\vert }{|\tau e^{\mathrm i\varphi}|}-\mathcal O(\tau^{-1}e^{-\frac{1}{2}\lambda r_0 \tau}),\label{2eq:u-03}
\end{align}
here and in what follows, the term $ \mathcal O(\tau^{-1}e^{-\frac{1}{2}\lambda r_0 \tau})$ is defined in \eqref{1-eq:1est}.
Recall that $\mathcal S_{r_0}$ is defined by \eqref{1eq:sr0}, where the polar angle of $\Gamma_{r_0}^-$ is $\vartheta_m$ and the polar angle of $\Gamma_{r_0}^+$ is $\vartheta_M$.
%Since $\Delta$ is invariant under rigid transformations, we will consistently assume $\vartheta_m=0$ for convenience throughout the rest of this paper, unless explicitly stated otherwise. Hence  for $\vartheta_M\in(0,\pi)$.
Combining \eqref{2eq:u-01}- \eqref{2eq:u-03} with \eqref{2eq:u-0}, then multiplying $\tau$  and letting $\tau\to \infty$, we see that
$$
\vert \eta_1u_1^{-}(\mathbf 0) \vert \vert  e^{\mathrm i\vartheta_m}+e^{\mathrm i\vartheta_M}\vert=0.
$$
Since
$$\vert e^{\mathrm i \vartheta_m}+ e^{\mathrm i \vartheta_M}\vert =\left(2(\cos(\vartheta_M-\vartheta_m)+1)\right)^{1/2}\not=0 \quad \mbox{for}\quad\vartheta_M-\vartheta_m\in(0,\pi),$$
it follows that
$$ \vert \eta_1u_1^{-}(\mathbf 0) \vert =0. $$
 However, this contradicts to the admissible condition $u_1^{-}(\mathbf 0)\neq0$  due to $\eta_1\neq0$.

\medskip
\noindent\textbf{Case (b)}:  $\partial_1u_1^{-}(\mathbf 0)\not=0$, $u_1^{-}(\mathbf 0)=\partial_2u_1^{-}(\mathbf 0)=0$. Similar to \eqref{2eq:u-0}, we have the following integral equality:
\begin{align}
\eta_1\partial_1u_1^{-}(\mathbf 0)&\int_{\Gamma_{r_0}^\pm} e^{\rho\cdot \mathbf x}x_1\mathrm d \sigma= -\eta_1\partial_1u_1^{-}(\mathbf 0)\int_{\Gamma_{r_0}^\pm}\psi(\mathbf x)e^{\rho\cdot \mathbf x}x_1\mathrm d\sigma \notag\\
&+\int_{\mathcal S_{r_0}}(1-q_1)u_1^{-}u_0\mathrm d\mathbf x-\eta_1\int_{\Gamma_{r_0}^\pm}\delta u_1^{-}u_0\mathrm d\sigma+I_4\label{2eq:u-04}.
%&+\int_{\Lambda_{r_0}}\partial_{\nu}u_0\bar{u}-\partial_\nu\bar{u}u_0\mathrm d\sigma.
\end{align}
By direct calculations we know that the right-hand side of \eqref{2eq:u-04} satisfies
\begin{equation}\label{2eq:u-05}
\vert RHS\vert\lesssim \tau^{-\frac{13}{6}}+\tau^{-(2+\alpha)}+(1+\tau)(1+\tau^{-\frac{2}{3}})e^{-\varsigma r_0 \tau}+\mathcal O(\tau^{-1}e^{-\frac{1}{2}\varsigma r_0 \tau}).
\end{equation}
Furthermore, we have the following inequality about the left-hand side of \eqref{2eq:u-04}
\begin{align}
\vert  LHS \vert \gtrsim \frac{\Gamma(2)| \eta_1\partial_1u_1^{-}(\mathbf 0)|}{\tau^2}\left \vert \frac{\cos\vartheta_M}{((\mathbf d+\mathrm i\mathbf d)\cdot \mathbf {\hat{x}}_1)^2}+\frac{\cos\vartheta_m}{((\mathbf d+\mathrm i\mathbf d)\cdot \mathbf {\hat{x}}_2)^2} \right\vert  -\mathcal O(\tau^{-1}e^{-\lambda r_0\tau}),\label{2eq:u-06}
\end{align}
where $\mathbf {\hat{x}}_1\in \Gamma_{r_0}^{+}$ and $\mathbf {\hat x}_2\in\Gamma_{r_0}^{-}$.
%Besides,  it can be derived that $(\mathbf d+\mathrm i\mathbf d^\perp)\cdot \mathbf {\hat{x}}_1=e^{\mathrm i(\varphi-\vartheta_M)} $ and $(\mathbf d+\mathrm i\mathbf d^\perp)\cdot \mathbf {\hat{x}}_2=e^{\mathrm i(\varphi-\vartheta_m)}$.
Let $\mathbf d^{\perp}$ be in the form of \eqref{1-eq:d2}, then we have
\begin{align*}
\left \vert \frac{\cos\vartheta_M}{((\mathbf d+\mathrm i\mathbf d)\cdot \mathbf {\hat{x}}_1)^2}+\frac{\cos\vartheta_m}{((\mathbf d+\mathrm i\mathbf d)\cdot \mathbf {\hat{x}}_2)^2} \right\vert  =\vert \cos \vartheta_Me^{-2\mathrm i\vartheta_m}&+\cos \vartheta_me^{-2\mathrm i\vartheta_M}\vert
%= (\cos\vartheta_M\cos 2\vartheta_m+\cos\vartheta_m\cos 2\vartheta_M)\\
%&+\mathrm i(\cos \vartheta_M\sin 2\vartheta_m+\cos \vartheta_m\sin 2\vartheta_M)
 \not=0
\end{align*}
for $-\pi<\vartheta_m<\vartheta_M<\pi,\ \vartheta_M-\vartheta_m\in(0,\pi)$. Substituting  \eqref{2eq:u-05}  and \eqref{2eq:u-06}  into \eqref{2eq:u-04}, then multiplying $\tau^2$ on both sides and taking $\tau \to \infty$, we get
$$\vert \eta_1 \partial_1u_1^{-}(\mathbf 0) \vert =0,$$
which implies that $\partial_1u_1^{-}(\mathbf 0) =0$ by noting $\eta_1\not=0$.
Hence, we obtain a contradiction.

\medskip

\noindent\textbf{Case (c)}:  $\partial_2u_1^{-}(\mathbf 0)\not=0,$ $u_1^{-}(\mathbf 0)=\partial_1u_1^{-}(\mathbf 0)=0$. Similar to \eqref{2eq:u-0}, it holds that
\begin{align}
\eta_1\partial_2u_1^{-}(\mathbf 0)&\int_{\Gamma_{r_0}^\pm} e^{\rho\cdot \mathbf x}x_2\mathrm d \sigma= -\eta_1\partial_2u_1^{-}(\mathbf 0)\int_{\Gamma_{r_0}^\pm}\psi(\mathbf x)e^{\rho\cdot \mathbf x}x_2\mathrm d\sigma \notag\\
&+\int_{\mathcal S_{r_0}}(1-q_1)u_1^{-}u_0\mathrm d\mathbf x-\eta_1\int_{\Gamma_{r_0}^\pm}\delta u_1^{-}u_0\mathrm d\sigma+I_4\label{2eq:u3-01}.
\end{align}
Similar to \eqref{2eq:u-04}, for the right-hand and left-hand sides of \eqref{2eq:u3-01}, we have
\begin{equation}\notag%\label{2eq:u3-02}
\vert RHS\vert\lesssim \tau^{-\frac{13}{6}}+\tau^{-(2+\alpha )}+(1+\tau)(1+\tau^{-\frac{2}{3}})e^{-\varsigma r_0 \tau}+\mathcal O(\tau^{-1}e^{-\frac{1}{2}\varsigma r_0 \tau})
\end{equation}
and
\begin{equation}\label{2eq:u3-03}
\vert LHS\vert\gtrsim\frac{\Gamma(2)\vert \eta_1\partial_2u_1^{-}(\mathbf 0)\vert}{\tau^2}\left\vert\frac{\sin\vartheta_M}{((\mathbf d+\mathrm i\mathbf d^\perp)\cdot \mathbf {\hat x}_1)^2} + \frac{\sin\vartheta_m}{((\mathbf d+\mathrm i\mathbf d^\perp)\cdot \mathbf {\hat x}_2)^2}  \right\vert-\mathcal O(\tau^{-1}e^{-\lambda r_0\tau})
\end{equation}
where $\mathbf {\hat x}_1\in\Gamma_{r_0}^{+},\ \mathbf {\hat x}_2\in \Gamma_{r_0}^{-}$.
Let $\mathbf d^\perp$ be defined as in \eqref{1-eq:d2}, then it follows that:
$$\left\vert\frac{\sin\vartheta_M}{((\mathbf d+\mathrm i\mathbf d^\perp)\cdot \mathbf {\hat x}_1)^2} + \frac{\sin\vartheta_m}{((\mathbf d+\mathrm i\mathbf d^\perp)\cdot \mathbf {\hat x}_2)^2}  \right\vert=\vert\sin\vartheta_Me^{-2\mathrm i\vartheta_m}+\sin\vartheta_me^{-2\mathrm i\vartheta_M} \vert \not=0,$$
for $-\pi<\vartheta_m<\vartheta_M<\pi,\ \vartheta_M-\vartheta_m\in(0,\pi)$ and $\vartheta_M-\vartheta_m\neq\frac{\pi}{2}.$
Combining \eqref{2eq:u3-01}-\eqref{2eq:u3-03}and taking $\tau\to \infty$, one has
$$
\vert \eta_1\partial_2u_1^{-}(\mathbf 0)\vert=0,
$$
Hence, it yields that $\partial_2u_1^{-}(\mathbf 0)=0$ since $\eta_1\neq0$, which contradicts to $\partial_2u_1^{-}(\mathbf 0)\not=0$.

\medskip

\noindent \textbf{Case (d)}: $\partial_1u_1^{-}(\mathbf 0)\not=0$,$\partial_2u_1^{-}(\mathbf 0)\not=0$, $u_1^{-}(\mathbf 0)=0$.  Similar to \eqref{2eq:u-0}, we have
\begin{align}
\eta_1\int_{\Gamma_{r_0}^\pm}&(\partial_1u_1^{-}(\mathbf 0)x_1+\partial_2u_1^{-}(\mathbf 0)x_2)e^{\rho\cdot \mathbf x}\mathrm d\sigma=\int_{\mathcal S_{r_0}}(1-q_1)u_1^{-}u_0\mathrm d\sigma-\eta_1\int_{\Gamma_{r_0}^{\pm}}\delta u_1^{-}u_0\mathrm d\sigma
\notag\\
&-\eta_1\int_{\Gamma_{r_0}^\pm} (\partial_1u_1^{-}(\mathbf 0)x_1+\partial_2u_1^{-}(\mathbf 0)x_2)e^{\rho\cdot \mathbf x}\psi(\mathbf x)\mathrm d\sigma
+I_4. \label{eq:case d 415}
\end{align}
By careful calculations, for the right-hand and left-hand sides of \eqref{eq:case d 415}, it is evident that
\begin{equation}\notag
\vert RHS\vert \lesssim \tau^{-\frac{13}{6}}+\tau^{-(\alpha +2)}+(1+\tau)(1+\tau^{-\frac{2}{3}})e^{-\varsigma r_0\tau}+\mathcal O(\tau^{-\frac{7}{6}}e^{-\frac{1}{2}\varsigma r_0\tau}),
\end{equation}
and
\begin{equation}\notag
\vert LHS\vert \gtrsim \frac{\Gamma(2)\vert \eta_1\vert \vert \mathcal N\vert}{\tau^2\vert (\mathbf d+\mathrm i\mathbf d^\perp)\cdot \mathbf {\hat x}_1\vert^2\vert(\mathbf d+\mathrm i\mathbf d^\perp)\cdot \mathbf {\hat x}_2\vert^2}-\mathcal O(\tau^{-1}e^{-\lambda r_0\tau}).
\end{equation}
Similarly, it yields that
\begin{equation}\notag
\vert \mathcal N\vert =0
\end{equation}
as $\tau\to \infty$.
When $\mathbf d^\perp$ is given in the form of \eqref{1-eq:d1}, we have
\begin{align}\notag
\mathcal N=&(\partial_1 u_1^{-}(\mathbf 0)\cos\vartheta_M+\partial_2 u_1^{-}(\mathbf 0)\sin\vartheta_M)e^{2\mathrm i\vartheta_m}\notag\\
&\quad+ (\partial_1 u_1^{-}(\mathbf 0)\cos\vartheta_m+\partial_2 u_1^{-}(\mathbf 0)\sin\vartheta_m)e^{2\mathrm i\vartheta_M},\label{2eq:N}
\end{align}
 And when $\mathbf d^\perp$ is given by \eqref{1-eq:d2},  one has
\begin{align}\notag
\mathcal N=&(\partial_1 u_1^{-}(\mathbf 0)\cos\vartheta_M+\partial_2 u_1^{-}(\mathbf 0)\sin\vartheta_M)e^{-2\mathrm i\vartheta_m}\notag\\
&\quad+ (\partial_1 u_1^{-}(\mathbf 0)\cos\vartheta_m+\partial_2 u_1^{-}(\mathbf 0)\sin\vartheta_m)e^{-2\mathrm i\vartheta_M}.\label{2eq:N2}
\end{align}

%Since $\Im N=\partial_1u_{2}^{+}(\mathbf 0)\sin2\vartheta_M=0$,
% we have $\partial_1u_2^{+}(\mathbf 0)=0$ by using $\vartheta_M\not=\frac{\pi}{2}$, which leads a contradiction.

%Therefore, we obtain $\Re (\mathcal N)=0$ and $\Im (\mathcal N)=0$. Here,
%\begin{equation}\notag
%Q\left ( \begin{matrix}
%\partial_1u_2^+(\mathbf 0)\\
%\partial_2u_2^+(\mathbf 0)
%\end{matrix}\right)=0,
%\end{equation}
%where
%\begin{equation}\notag
%Q=\left( \begin{matrix}
%\cos\vartheta_M\cos2\vartheta_m+\cos\vartheta_m\cos2\vartheta_M &\sin\vartheta_M\cos2\vartheta_m+\sin\vartheta_m\cos2\vartheta_M\\
%-(\cos\vartheta_M\sin2\vartheta_m+\cos\vartheta_m\sin2\vartheta_M) &-(\sin\vartheta_M\sin2\vartheta_m+\sin\vartheta_m\sin2\vartheta_M)
%\end{matrix}
%\right).
%\end{equation}
%Moreover, as $\vartheta_M-\vartheta_m\in(0,\pi)$ and  $\vartheta_M-\vartheta_m\not=\frac{\pi}{2}$, it follows that
%$$\det(Q)=2\sin^2(\vartheta_M-\vartheta_m)\cos(\vartheta_M-\vartheta_m)\not=0.$$
%Consequently, we deduce that $\partial_1u_2^+(\mathbf 0)=\partial_2u_2^+(\mathbf 0)=0$, which leads a contradiction.

Let us consider the real and imaginary parts of $\partial_1u_1^{-}(\mathbf 0)$ and  $\partial_2u_1^{-}(\mathbf 0)$, seperately. Denote $\partial_1u_1^{-}(\mathbf 0)=a_1+b_1\mathrm i$ and $ \partial_2u_1^{-}(\mathbf 0)=a_2+b_2\mathrm i$, where  $a_i,b_i\in \mathbb R\ (i=1,2).$ For \eqref{2eq:N}, it follows that
\begin{align}
\Re(\mathcal N)&=a_1(\cos\vartheta_M\cos2\vartheta_m+\cos \vartheta_m\cos 2\vartheta_M)-b_1(\cos\vartheta_M\sin2\vartheta_m+\cos \vartheta_m\sin2\vartheta_M)\notag\\
&+a_2(\sin\vartheta_M\cos 2\vartheta_m+ \sin\vartheta_m\cos 2\vartheta_M)
-b_2(\sin \vartheta_M\sin 2\vartheta_m+\sin \vartheta_m\sin 2\vartheta_M),\label{eq:N01R}\\
\Im(\mathcal N)&=a_1(\cos\vartheta_M\sin 2\vartheta_m+\cos \vartheta_m\sin 2\vartheta_M)+b_1(\cos\vartheta_M\cos2\vartheta_m+\cos \vartheta_m\cos2\vartheta_M)\notag\\
&+a_2(\sin\vartheta_m\sin 2\vartheta_M+ \sin\vartheta_m\sin 2\vartheta_M)+b_2(\sin \vartheta_M\cos 2\vartheta_m+\sin \vartheta_m\cos 2\vartheta_M).\notag%\label{eq:N01I}
\end{align}
Similarly,  for \eqref{2eq:N2}, one has
\begin{align}
\Re(\mathcal N)&=a_1(\cos\vartheta_M\cos 2\vartheta_m+\cos \vartheta_m\cos 2\vartheta_M)+b_1(\cos\vartheta_M\sin2\vartheta_m+\cos \vartheta_m\sin2\vartheta_M)\notag\\
&+a_2(\sin\vartheta_M\cos 2\vartheta_m+ \sin\vartheta_m\cos 2\vartheta_M)+b_2(\sin \vartheta_M\sin 2\vartheta_m+\sin \vartheta_m\sin 2\vartheta_M),\notag\\%\label{eq:N02R}
\Im(\mathcal N)&=-a_1(\cos\vartheta_M\sin 2\vartheta_m+\cos \vartheta_m\sin 2\vartheta_M)+b_1(\cos\vartheta_M\cos2\vartheta_m+\cos \vartheta_m\cos2\vartheta_M)\notag\\
&-a_2(\sin\vartheta_m\sin 2\vartheta_M+ \sin\vartheta_m\sin 2\vartheta_M)+b_2(\sin \vartheta_M\cos 2\vartheta_m+\sin \vartheta_m\cos 2\vartheta_M).\label{eq:N02I}
\end{align}
Combining \eqref{eq:N01R}-\eqref{eq:N02I} with the fact that $|\mathcal N|=0$, we can derive that
\begin{equation}\label{eq:ab=0}
\mathcal M_1\left(
\begin{matrix}
a_1\\
a_2
\end{matrix}
\right)=0, \
\mathcal M_2\left(
\begin{matrix}
b_1\\
b_2
\end{matrix}
\right)=0,
\end{equation}
where
\begin{equation}\notag
\mathcal M_1=\left(
\begin{matrix}
\cos \vartheta_M\cos2\vartheta_m+ \cos \vartheta_m\cos2\vartheta_M & \sin \vartheta_M\cos 2\vartheta_m+\sin \vartheta_m\cos 2\vartheta_M\\
\cos \vartheta_M\sin2\vartheta_m+ \cos \vartheta_m\sin2\vartheta_M & \sin \vartheta_M\sin 2\vartheta_m+\sin \vartheta_m\sin 2\vartheta_M
\end{matrix}
\right)
\end{equation}
and
\begin{equation}\notag
\mathcal M_2=\left(
\begin{matrix}
\cos \vartheta_M\sin2\vartheta_m+ \cos \vartheta_m\sin2\vartheta_M & \sin \vartheta_M\sin 2\vartheta_m+\sin \vartheta_m\sin 2\vartheta_M\\
\cos \vartheta_M\cos2\vartheta_m+ \cos \vartheta_m\cos2\vartheta_M & \sin \vartheta_M\cos 2\vartheta_m+\sin \vartheta_m\cos 2\vartheta_M
\end{matrix}
\right).
\end{equation}
 According to the admissible condition, we know that $\vartheta_M-\vartheta_m\in (0,\pi)$ and $\vartheta_M-\vartheta_m\not=\frac{\pi}{2}$. After tedious  calculations, we have
$$
\det(\mathcal M_1)=-\det(\mathcal M_2)=-2\sin^2(\vartheta_M-\vartheta_m)\cos(\vartheta_M-\vartheta_m)\neq 0.
 $$
%when $\vartheta_M-\vartheta_m\not=\frac{\pi}{2}, \vartheta_M-\vartheta_m\in (0,\pi).$
Therefore, by \eqref{eq:ab=0}, we have $a_i=b_i=0$ for $i=1,2$, which indicates that $\partial_1u_1^{-}(\mathbf 0)=\partial_2u_1^{-}(\mathbf 0)=0$. This leads to a contradiction.

The proof is complete.
\end{proof}

\begin{rem}
From the proof of Theorem \ref{2thm:un-shape}, we can see that when the conductive boundary parameter $\eta$ is a H\"older continuous function on the boundary and $\eta$  does not vanish at the vertex of the scatterer, we can also establish the local uniqueness result for determining the shape of an admissible conductive  polygonal-nest or polygonal-cell medium scatterer by a single far-field pattern. The corresponding proof can be modified slightly from the proof of Theorem \ref{2thm:un-shape}.  Similarly, a global unique result for identifying the shape of an admissible conductive polygonal-nest medium scatterer by a single  far-field pattern can be obtained for a H\"older continuous conductive boundary parameter  with the non-vanishing property at the vertex.
%If we assume that $\eta\in C^\alpha(\overline{\Gamma_{r_0}^{\pm}})$, $\alpha\in(0,1)$ with $\eta(\mathbf 0)\not=0$, then we can also establish the determination of shape results. The proof can be obtained by modifying one for Theorem \ref{2thm:un-shape}, so we do not give a more detailed description.
\end{rem}

Considering an admissible conductive polygonal-cell medium scatterer, the following theorem states the unique identifiability for the refractive index and conductive boundary parameter by a single far-field measurement, where we require a-priori knowledge of the shape and  structure of the admissible conductive polygonal-cell medium scatterer.

\begin{thm}\label{2thm:un-eta-cell}
Suppose that  $(\Omega;q_\ell,\eta_\ell),\ell=1,2$ are two admissible conductive polygon-cell medium scatterers associated with the scattering problem \eqref{eq:sca}.  Suppose that the refractive index $q_\ell$ and conductive boundary parameter $\eta_\ell$ have a common polygonal-cell structure $\cup_{i=1}^N\Sigma_{i}$ described by Definition \ref{1-def:cell}. Let  $q_\ell$ and $\eta_\ell$ be defined as follows:
$$
q_\ell=\sum_{i=1}^Nq_{i,\ell}\chi_{\Sigma_{i}},\ \eta_\ell=\sum_{i=1}^N\tilde \eta_\ell \chi_{\Sigma_{i}}.
$$
where the coefficients of $q_{i,\ell}$ are complex valued following  the form specified in \eqref{2eq:qform} and $\tilde \eta_\ell$ {$\in \mathbb C$}. Let $u_\ell ^\infty(\mathbf {\hat x};u^i)$ be the far-field pattern associated with the incident wave $u^i$ corresponding to $(\Omega;q_\ell,\eta_\ell)$. Assume that
$$
 u_1 ^\infty(\mathbf {\hat x};u^i)=u_2 ^\infty(\mathbf {\hat x};u^i)
 $$
for all $\mathbf {\hat x} \in \mathbb S^{1}$ and a fixed incident wave $u^i$. Then we have $q_1=q_2$ and $\eta_1=\eta_2.$
\end{thm}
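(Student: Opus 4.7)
By Rellich's lemma and the unique continuation principle, $u_1^\infty \equiv u_2^\infty$ implies $u_1 \equiv u_2$ in $\mathbb{R}^2 \setminus \overline{\Omega}$, so the exterior Cauchy data $(u_\ell^+, \partial_\nu u_\ell^+)$ agree on $\partial\Omega$. Fix any cell $\Sigma_i$ and its admissible vertex $\mathbf{x}_{p,i}$ with adjacent edges $\Gamma_i^\pm \subset \partial\Omega$; after a rigid motion, place $\mathbf{x}_{p,i}$ at the origin so the local geometry is $\mathcal{S}_{r_0}$ with boundary pieces $\Gamma_{r_0}^\pm$. Combining the two conductive transmission conditions of \eqref{eq:sca} on both sides, the difference $w_i := u_1 - u_2$ satisfies
\begin{equation*}
\Delta w_i + k^2 q_{i,1} w_i = k^2(q_{i,2}-q_{i,1}) u_2 \ \text{in}\ \mathcal{S}_{r_0}, \quad w_i = 0, \quad \partial_\nu w_i = (\tilde\eta_1 - \tilde\eta_2) u_1 \ \text{on}\ \Gamma_{r_0}^\pm.
\end{equation*}
Since $q_{i,1}$ is a linear polynomial, Lemma \ref{1-lem:cgo} produces a CGO solution $u_0 = (1+\psi) e^{\rho \cdot \mathbf{x}}$ of $(\Delta + k^2 q_{i,1}) u_0 = 0$, while Lemma \ref{2lem:reggularity} yields $u_1 \in C^{1,\alpha}(\overline{\mathcal{S}_{t_0'r_0}})$. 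Multiplying the PDE by $u_0$ and applying Green's formula over $\mathcal{S}_{r_0}$ produces the key integral identity
\begin{equation*}
(\tilde\eta_1 - \tilde\eta_2) \int_{\Gamma_{r_0}^\pm} u_1 u_0 \, d\sigma = k^2 \int_{\mathcal{S}_{r_0}} (q_{i,2}-q_{i,1}) u_2 u_0 \, d\mathbf{x} - \int_{\Lambda_{r_0}} (\partial_\nu w_i\, u_0 - w_i\, \partial_\nu u_0)\, d\sigma,
\end{equation*}
whose arc-integral on $\Lambda_{r_0}$ is exponentially small in $\tau$ by Proposition \ref{1-prop:some est}.

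The plan is now a recursive microlocal extraction as $\tau \to \infty$, modeled on the corner analysis in the proof of Theorem \ref{2thm:un-shape}. Step one isolates the $\tau^{-1}$ order, where the volume term is subdominant: the leading balance forces $(\tilde\eta_1-\tilde\eta_2) u_1(\mathbf{0}) \int_{\Gamma_{r_0}^\pm} e^{\rho \cdot \mathbf{x}} d\sigma$ to vanish, and admissibility condition \eqref{2eq:ad-1}---or, in the degenerate case, \eqref{2eq:ad-2} coupled with the invertibility of the matrices arising from the two choices \eqref{1-eq:d1} and \eqref{1-eq:d2} of $\mathbf{d}^\perp$ under $\vartheta_M - \vartheta_m \neq \pi/2$---yields $\tilde\eta_1 = \tilde\eta_2$. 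Step two works at order $\tau^{-2}$: once the boundary term drops out, the leading volume contribution is proportional to $(q_{i,2}(\mathbf{0}) - q_{i,1}(\mathbf{0})) u_2(\mathbf{0})$ times a nonzero geometric factor $\int_{\mathcal{S}_{r_0}} e^{\rho\cdot \mathbf{x}} d\mathbf{x}$, producing $q_{i,1}^{(0)} = q_{i,2}^{(0)}$. Step three, at order $\tau^{-3}$: after peeling off the constant part, the leading integrand becomes $u_2(\mathbf{0})\bigl[(q_{i,2}^{(1)}-q_{i,1}^{(1)}) x_1 + (q_{i,2}^{(2)}-q_{i,1}^{(2)}) x_2\bigr] e^{\rho\cdot\mathbf{x}}$; using the two alternative choices of $\mathbf{d}^\perp$ produces a $2 \times 2$ linear system for the linear coefficients whose determinant is again nonzero under $\vartheta_M - \vartheta_m \neq \pi/2$, fixing $q_{i,1}^{(1)} = q_{i,2}^{(1)}$ and $q_{i,1}^{(2)} = q_{i,2}^{(2)}$. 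Applying this at every cell $\Sigma_i$ (each of which has such an admissible vertex on $\partial\Omega$ by Definition \ref{1-def:cell}(c)) concludes $q_1 \equiv q_2$ and $\tilde\eta_1 = \tilde\eta_2$.

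The main obstacle lies in the degenerate admissibility subcases, where $u_1(\mathbf{0}) = 0$ obstructs Step one or $u_2(\mathbf{0}) = 0$ obstructs Steps two and three. In each such case the leading scalar identity vanishes, one must push the Taylor expansion one order further, and replace it with a $2 \times 2$ linear system in the partial derivatives $\partial_1 u_j(\mathbf{0}), \partial_2 u_j(\mathbf{0})$, whose coefficient matrix is built from the trigonometric quantities $\cos \vartheta_{m,M}$, $\sin \vartheta_{m,M}$, $\cos 2\vartheta_{m,M}$, $\sin 2\vartheta_{m,M}$. The angle restriction $\angle(\Gamma_p^-, \Gamma_p^+) \in (0,\pi) \setminus \{\pi/2\}$ built into Definition \ref{2def:adm} is exactly what secures the nonvanishing of these determinants, paralleling the computation $\det \mathcal{M}_1 = -2 \sin^2(\vartheta_M - \vartheta_m)\cos(\vartheta_M - \vartheta_m) \neq 0$ from the proof of Theorem \ref{2thm:un-shape}; without it, the microlocal argument cannot disentangle the two partial derivatives.
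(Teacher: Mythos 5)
Your proposal follows essentially the same route as the paper: Rellich's lemma to match the exterior Cauchy data on $\partial\Omega$, reduction to a corner transmission system at an admissible vertex of each cell $\Sigma_i$, and a CGO-based recursive extraction of $\tilde\eta$, then $q^{(0)}$, then $(q^{(1)},q^{(2)})$ at successive orders $\tau^{-1},\tau^{-2},\tau^{-3}$. The paper packages the entire corner analysis into Lemma \ref{2thm:conductive} and the proof of Theorem \ref{2thm:un-eta-cell} simply invokes it by contradiction; you re-derive that analysis inline, and the order-by-order structure, the two choices of $\mathbf d^\perp$, and the $2\times 2$ determinant computations you describe are exactly those appearing in that lemma's proof.

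One concrete discrepancy: you invoke Lemma \ref{2lem:reggularity} to obtain $u_1\in C^{1,\alpha}$ near the vertex, but that lemma's hypotheses are not satisfied here --- it requires the second field to solve the Helmholtz equation in the whole disk $B_{2r_0}$, whereas at a vertex of $\partial\Omega$ the exterior field $u^{+}$ solves it only in $B_{r_0}\setminus\overline{\mathcal S_{r_0}}$ and the boundary matching is the conductive one, not plain Dirichlet. The paper instead uses Lemma \ref{lem:holdernew}, tailored to the conductive transmission setting, which yields only $u^{-}\in C^{\alpha}(\overline{\mathcal S_{\mathbf x_p,r_0}})$; this weaker regularity suffices for the expansion $u=u(\mathbf 0)+O(|\mathbf x|^{\alpha})$ used in all three steps of the non-degenerate case, so your main line of argument survives with the corrected citation. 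However, your treatment of the degenerate subcases (where $u(\mathbf 0)=0$ and one falls back on the gradient condition \eqref{2eq:ad-2}) genuinely requires the $C^{1,\alpha}$ regularity you claim, which is not established in this setting; note that Lemma \ref{2thm:conductive} explicitly assumes the non-vanishing condition \eqref{2eq:admissible le33 1} and does not treat the gradient case for parameter identification, so on this point your sketch goes beyond, and rests on less secure ground than, what the paper actually proves.
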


Considering an admissible conductive polygonal-nest medium scatterer,  we have shown that the shape of the underlying medium scatterer can be uniquely determined by a single far-field measurement in Theorem \ref{2thm:un-shape}. In the following theorem, we will further prove that the polygonal-nest structure of of the underlying scatterer, the corresponding reflective  indexes and the conductive boundary parameter can be uniquely determined by a single far-field pattern.

\begin{thm}\label{2thm:un-eta-nest}
Suppose $(\Omega_\ell;q_\ell,\eta_\ell),\ell=1,2$ are two admissible polygonal-nest medium scatterers associated with the scattering problem \eqref{eq:sca}, where $\Omega_1=\bigcup_{i=1}^{N_1}\mathcal D_{i,1}$ and $\Omega_2=\bigcup_{i=1}^{N_2}\mathcal D_{i,2}$ with $\mathcal D_{i,\ell}=\Sigma_{i,\ell}\setminus \overline{\Sigma_{i,\ell}},i=1,\dots,N_\ell$. For each $\ell$, let $q_\ell$ and $\eta_\ell$ be defined as follows:
$$
q_\ell=\sum_{i=1}^{N_\ell}q_{i,\ell}\chi_{\mathcal D_{i,\ell}},\ \eta_\ell=\sum_{i=1}^{N_\ell} \eta _{i,\ell}\chi_{\partial \Sigma_{i,\ell}}.
$$
The coefficients of $q_{i,\ell}$ are complex valued, where $q_{i,\ell}$ follows the form specified in \eqref{2eq:qform}, while $\eta _{i,\ell}$ {$\in \mathbb C$}. Let $u_\ell ^\infty(\mathbf {\hat x};u^i)$ be the far-field pattern associated with the incident wave $u^i$ corresponding to $(\Omega_\ell;q_\ell,\eta_\ell)$. Assume that
$$  u_1 ^\infty(\mathbf {\hat x};u^i)=u_2 ^\infty(\mathbf {\hat x};u^i)$$
for all $\mathbf {\hat x} \in \mathbb S^{1}$ and a fixed incident wave $u^i$. Then we have $N_1=N_2=N$, $\partial \Sigma_{i,1}=\partial \Sigma_{i,2}$, $q_{i,1}=q_{i,2}$ and $\eta_{i,1}=\eta_{i,2}$ for $i=1,\dots, N$.
\end{thm}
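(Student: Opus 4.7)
The plan is to prove the theorem by induction on the layer index, peeling off matched layers from the outside toward the inside. At each stage we first show that the two outer boundaries of the current outermost annulus coincide, then use a CGO corner analysis to recover the four parameters on that annulus (the three coefficients of the linear polynomial $q_{i,\ell}$ and the constant $\eta_{i,\ell}$). The base case begins by applying Theorem \ref{2thm:un-shape} (in its global form, valid for admissible conductive polygonal-nest scatterers) to conclude $\partial\Sigma_{1,1}=\partial\Sigma_{1,2}$, after which we write $\Sigma_1:=\Sigma_{1,1}=\Sigma_{1,2}$. By Rellich's lemma and the trace theorem, $u_1=u_2$ in $\mathbb{R}^2\setminus\overline{\Sigma_1}$ and therefore on $\partial\Sigma_1$.

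Next, pick a vertex $\mathbf{x}_p$ of $\Sigma_1$, translate it to the origin, and choose $r_0>0$ small enough that $B_{r_0}\cap\overline{\Sigma_{2,\ell}}=\emptyset$ for $\ell=1,2$; this reduces the problem near $\mathbf{0}$ to a one-layer conductive corner problem. Setting $w=u_1-u_2$ on $\mathcal{S}_{r_0}$, the conductive transmission conditions give
\begin{equation*}
\Delta w+k^2q_{1,1}w=k^2(q_{1,2}-q_{1,1})u_2\ \mbox{in}\ \mathcal{S}_{r_0},\quad w=0,\ \partial_\nu w=(\eta_{1,1}-\eta_{1,2})u^+\ \mbox{on}\ \Gamma_{r_0}^\pm.
\end{equation*}
Testing against the CGO solution $u_0$ of Lemma \ref{1-lem:cgo}, invoking Lemma \ref{2lem:reggularity} to expand $u_1^-$ to first order at $\mathbf{0}$ with a H\"older remainder, and passing to the limit $\tau\to\infty$ via Proposition \ref{1-prop:some est}, we produce integral identities analogous to those used in the proof of Theorem \ref{2thm:un-shape}. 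Comparing successive asymptotic orders ($\tau^{-1}$, $\tau^{-2}$, $\tau^{-3}$) isolates first the coefficient $(\eta_{1,1}-\eta_{1,2})u^+(\mathbf{0})$, then $(q_{1,2}(\mathbf{0})-q_{1,1}(\mathbf{0}))u_2(\mathbf{0})$, and finally the gradient moments involving $q_{1,1}^{(1)}-q_{1,2}^{(1)}$, $q_{1,1}^{(2)}-q_{1,2}^{(2)}$. Using both CGO choices \eqref{1-eq:d1}--\eqref{1-eq:d2} and running the admissibility analysis through the four cases of Remark \ref{rem32} at sufficiently many vertices yields a full-rank linear system that forces $\eta_{1,1}=\eta_{1,2}$ and $q_{1,1}\equiv q_{1,2}$ on $\mathcal{D}_1$.

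With $q_{1,\ell}$ and $\eta_{1,\ell}$ identified, the weak UCP gives $w\equiv 0$ on all of $\mathcal{D}_1$, hence $u_1=u_2$ together with their normal derivatives on $\partial\Sigma_{2,\ell}$ by the trace theorem. This reduces the problem to an identical inverse problem for the nested substructure $\bigcup_{i=2}^{N_\ell}\mathcal{D}_{i,\ell}$ with known Cauchy data on the common interface $\partial\Sigma_1$, and the argument repeats. The termination clause $N_1=N_2$ is established by contradiction: if $N_1<N_2$, then after matching the first $N_1$ layers we have $u_1=u_2$ on $\Sigma_{N_1,1}$, so $u_1$ is analytic across the would-be inner polygon $\partial\Sigma_{N_1+1,2}$, whereas $u_2$ suffers a genuine conductive transmission jump there; applying Theorem \ref{thm:ucp} at a corner of $\Sigma_{N_1+1,2}$ then forces $u_2\equiv 0$, a contradiction with the radiation condition. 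The main obstacle is the base-case parameter recovery, which must disentangle four unknowns from the asymptotics of a single integral identity; this is resolved by extracting four distinct asymptotic orders and exploiting the admissibility conditions at multiple vertices of $\Sigma_1$ to guarantee non-degeneracy. A secondary subtlety is the innermost-layer termination, which crucially uses the novel UCP of Theorem \ref{thm:ucp} to exclude phantom nested polygons.
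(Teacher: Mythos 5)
Your overall architecture matches the paper's: a layer-stripping induction in which Theorem \ref{2thm:un-shape} gives $\partial\Sigma_{1,1}=\partial\Sigma_{1,2}$, a CGO test at a vertex of the current layer recovers $\eta_{i,\ell}$ at order $\tau^{-1}$, then $q_{i,\ell}^{(0)}$ at order $\tau^{-2}$ and $q_{i,\ell}^{(1)},q_{i,\ell}^{(2)}$ at order $\tau^{-3}$, and the resulting Cauchy data on the next interface propagates the argument inward. What you re-derive inline is exactly the content of Lemma \ref{2thm:conductive} (including its final Holmgren step giving $u_1^-=u_2^-$), so that part of your proposal is essentially the paper's proof.

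The genuine gap is your termination argument for $N_1=N_2$. You propose to apply Theorem \ref{thm:ucp} at a corner of the phantom inner polygon $\Sigma_{N_1+1,2}$ to force $u_2\equiv 0$. Theorem \ref{thm:ucp} is not applicable there for two reasons. First, it requires the corner to be \emph{irrational}, and no irrationality is assumed anywhere in Theorem \ref{2thm:un-eta-nest}: the admissibility of a polygonal-nest scatterer (Definition \ref{2def:adm}) only constrains the angle to lie in $(0,\pi)\setminus\{\pi/2\}$ under condition \eqref{2eq:ad-2}, so the phantom corner may well be rational and the conclusion of Theorem \ref{thm:ucp} simply fails for such corners (indeed the paper stresses that invisible scatterers with rational corners exist). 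Second, Theorem \ref{thm:ucp} requires the ``exterior'' equation to have a \emph{constant} coefficient $\gamma_1\in\mathbb R_+$, whereas across $\partial\Sigma_{N_1+1,2}$ the field $u_1$ satisfies $\Delta u_1+k^2q_{N_1,1}u_1=0$ with $q_{N_1,1}$ a linear polynomial. The paper instead handles this step by running the same CGO corner asymptotics as in the proof of Theorem \ref{2thm:un-shape} (Cases (a)--(d)) at a vertex of the phantom polygon, concluding that the total field and its gradient both vanish there, which contradicts the admissibility conditions \eqref{2eq:ad-1}--\eqref{2eq:ad-2}. Replacing your invocation of Theorem \ref{thm:ucp} by that admissibility-based corner argument repairs the proof; as written, the termination step does not go through.
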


The proofs of Theorems \ref{2thm:un-eta-cell} and \ref{2thm:un-eta-nest} are deferred to Section \ref{sec:4}. Prior to that, we require the following lemmas.
Lemma \ref{lem:holdernew} establishes that the solution $u^- \in H^{1}(\Omega)$ to the conductive scattering problem \eqref{eq:sca} is H\"older continuous in the vicinity of the corner. Furthermore, Lemma \ref{2thm:conductive} investigates the local behavior of solutions to the coupled PDE system \eqref{2-eq:conductive} near a corner. Furthermore, we obtain local uniqueness for  identifying both the refractive index and the conductive boundary parameter of the coupled PDE system.

 \begin{lem}\label{lem:holdernew}
Consider a conductive medium scatterer  $(\Omega;q,\eta)$ possessing   either a polygonal-nest structure or  polygonal-cell structure defined in Definitions \ref{1-def:pi-nest} and  \ref{1-def:pi-cell} associated with the  convex polygons $\Sigma_i,i=1,\dots,N$ given in Definitions \ref{1-def:nest} or \ref{1-def:cell}.
%Let $P_i,i=1,\dots,M$ be infinite  bounded, disjoint, open and simply connected polygonal domain such that both $\overline{P_i}\cap \overline{P_j}=\emptyset,i\not=j$ and $\overline{P_i}\cap \overline{\Omega}=\emptyset$ or a common vertex or a whole common side and $\mathcal P=\cup_{i=1}^{M}P_i\cup\Omega$ is  bounded, convex and connected.
Recall that  $\mathcal V(\Sigma_i)$  and  $\mathcal V(\Omega)$  as defined in Definition \ref{2def:adm} denote  the vertex sets of the polygons $\Sigma_i$ for   the polygonal-nest and of the $\Omega$ for  the polygonal-cell structure, respectively. %, where $\Sigma_i,i=1,\dots,N$ defined in Definition \ref{1-def:nest}.
 Let $\EM x_p\in \mathcal V(\Sigma_i)$  or $\EM x\in \mathcal V(\Omega)$ be  a vertex point. For sufficiently small radius $r_0$, define $\mathcal S_{\EM x_p,r_0}=B_{\EM x_p,r_0}\cap \Sigma_i$ for  the polygonal-nest structure and   $\mathcal S_{\EM x_p,\rho}=B_{\EM x_p,r_0}\cap \Omega$  for the polygonal-cell structure. If $u=u^{-}\chi_{\Omega}+u^{+}\chi_{\mathbb R^2\setminus \overline{\Omega}}\in H^1_{loc}(\mathbb R^2)$ satisfies  the conductive scattering problem \eqref{eq:sca}, then one has $u^{-}\in C^\alpha(\overline{\mathcal S_{\EM x_p,r_0}}),\ \alpha\in(0,1]$.
\end{lem}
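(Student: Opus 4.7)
The plan is to deduce H\"older continuity of $u^{-}$ near the vertex $\mathbf x_p$ from H\"older continuity of the total field $u = u^{-}\chi_{\Omega} + u^{+}\chi_{\mathbb R^2\setminus \overline\Omega}$, which in turn will follow from $2$D elliptic regularity applied to the weak form of the conductive transmission problem. By the well-posedness of \eqref{eq:sca}, $u\in H^1_{loc}(\mathbb R^2)$. First I would choose $r_1>r_0$ small enough that $B_{\mathbf x_p, r_1}$ contains no vertex of the polygonal-nest or polygonal-cell structure other than $\mathbf x_p$, so that the interior interfaces $\partial \Sigma_j \cap B_{\mathbf x_p, r_1}$ consist of finitely many line segments emanating from $\mathbf x_p$.

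In $B_{\mathbf x_p, r_1}$, testing the equations and the conductive transmission conditions of \eqref{eq:sca} against $v\in H^1_0(B_{\mathbf x_p, r_1})$ yields the variational identity
\[
\int_{B_{\mathbf x_p, r_1}} \nabla u\cdot\nabla\bar v\,d\mathbf x + \sum_{j} \eta_j \int_{\partial \Sigma_j\cap B_{\mathbf x_p, r_1}} u\bar v\,d\sigma = k^2\int_{B_{\mathbf x_p, r_1}} \tilde q\, u\bar v\,d\mathbf x,
\]
where $\tilde q = q\chi_\Omega + \chi_{\mathbb R^2\setminus\overline\Omega}\in L^\infty$ and the $\eta_j\in\mathbb C$ are the constant conductive parameters on the interfaces. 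The bulk right-hand side lies in $L^p_{loc}$ for every $p<\infty$ by the $2$D embedding $H^1\hookrightarrow L^p$, while the boundary integrals are compact lower-order perturbations via the trace inequality $\|u\|_{L^p(\partial\Sigma_j\cap B)}\lesssim \|u\|_{H^1(B)}$ and the constancy of $\eta_j$.

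A Meyers-type reverse H\"older estimate (\`a la Giaquinta--Modica) for the resulting uniformly elliptic transmission problem with piecewise linear interfaces then yields $\nabla u \in L^{2+\varepsilon}_{loc}(B_{\mathbf x_p, r_1})$ for some $\varepsilon>0$ depending only on the configuration of line segments at $\mathbf x_p$. Consequently, the $2$D Sobolev embedding $W^{1,2+\varepsilon}(B_{\mathbf x_p, r_0}) \hookrightarrow C^{\alpha}(\overline{B_{\mathbf x_p, r_0}})$ with $\alpha = \varepsilon/(2+\varepsilon)\in(0,1)$ gives $u\in C^{\alpha}(\overline{B_{\mathbf x_p, r_0}})$. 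Restricting to $\mathcal S_{\mathbf x_p, r_0}$ produces $u^{-}\in C^{\alpha}(\overline{\mathcal S_{\mathbf x_p, r_0}})$, as claimed.

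The main obstacle is establishing the Meyers-type higher integrability in the presence of the conductive boundary integrals. Because $\eta_j$ is constant and $\partial\Sigma_j$ is piecewise linear, the classical Giaquinta--Modica argument for the $L^p$-regularity of uniformly elliptic problems can be adapted to this setting by absorbing the boundary terms as compact perturbations; alternatively, a De Giorgi--Nash iteration tailored to conductive transmission problems yields the H\"older continuity directly. The corner at $\mathbf x_p$ does not obstruct the argument since the opening angles at $\mathbf x_p$ are bounded away from $0$ and $2\pi$, so that the exponent $\alpha$ depends only on the geometric configuration of the interfaces meeting at $\mathbf x_p$.
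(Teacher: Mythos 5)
Your argument takes a genuinely different route from the paper. The paper multiplies $u$ by a cutoff supported near $\overline{\Omega}$, views the resulting Dirichlet problem as an interface (``polygonal network'') problem in the sense of Dauge--Nicaise, invokes their $H^2$-regularity theorem (using convexity of the corner), and concludes by the embedding $H^2\hookrightarrow C^{\alpha}$ in two dimensions. You instead work with the weak formulation in a ball around the vertex and aim for $\nabla u\in L^{2+\varepsilon}_{\mathrm{loc}}$ via a Meyers-type estimate, then use Morrey's embedding. Conceptually your route has an advantage: since you work in a full ball centered at $\mathbf{x}_p$, there is no boundary corner to worry about, and all the interface data enter only as right-hand-side terms for the constant-coefficient operator $\Delta$; the corner geometry genuinely plays no role, whereas in the paper it is needed to apply the $H^2$ network result.

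However, the step you yourself flag as ``the main obstacle'' is exactly where the proof is missing, and it cannot be dispatched by calling the interface integrals ``compact lower-order perturbations'': compactness of the perturbation is relevant for solvability, not for interior regularity, and the Giaquinta--Modica/Meyers machinery does not apply off the shelf to a right-hand side containing surface-supported terms of the form $\eta_j\, u\,\delta_{\Gamma_j}$. To close the gap you should argue as follows. Writing the distributional equation in $B_{\mathbf{x}_p,r_1}$ as $-\Delta u = k^2\tilde q\,u-\sum_j\eta_j\,u\,\delta_{\Gamma_j}$, note that $u|_{\Gamma_j}\in H^{1/2}(\Gamma_j)\hookrightarrow L^q(\Gamma_j)$ for all $q<\infty$, and that for $p'$ slightly below $2$ the trace operator maps $W^{1,p'}(B)$ boundedly into $L^2(\Gamma_j)$; hence $\eta_j u\,\delta_{\Gamma_j}\in W^{-1,p}_{\mathrm{loc}}(B)$ for some $p>2$, while $k^2\tilde q\,u\in L^2\subset W^{-1,p}_{\mathrm{loc}}$ for every $p<\infty$. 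Since the principal part is the constant-coefficient Laplacian, the standard Calder\'on--Zygmund $W^{1,p}$ estimate (no Meyers or De Giorgi--Nash iteration is needed, and the exponent does not depend on the ``configuration of line segments'' as you assert) yields $u\in W^{1,p}_{\mathrm{loc}}$ for some $p>2$, and Morrey's embedding gives $u\in C^{\alpha}$ near $\mathbf{x}_p$; restriction to $\overline{\mathcal S_{\mathbf{x}_p,r_0}}$ finishes the proof. Two minor points: check the sign of the interface term in your variational identity (with $\nu$ the exterior normal of $\Omega$ the transmission condition in \eqref{eq:sca} produces $-\sum_j\eta_j\int_{\Gamma_j}u\bar v\,d\sigma$ on the left), and in the polygonal-nest case remember that the conductive jump conditions \eqref{2eq:conb} also hold on the interior interfaces $\partial\Sigma_{i+1}$, so all such segments through $\mathbf{x}_p$ must be included in the sum.
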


\begin{figure}
\centering
\includegraphics[width=5cm]{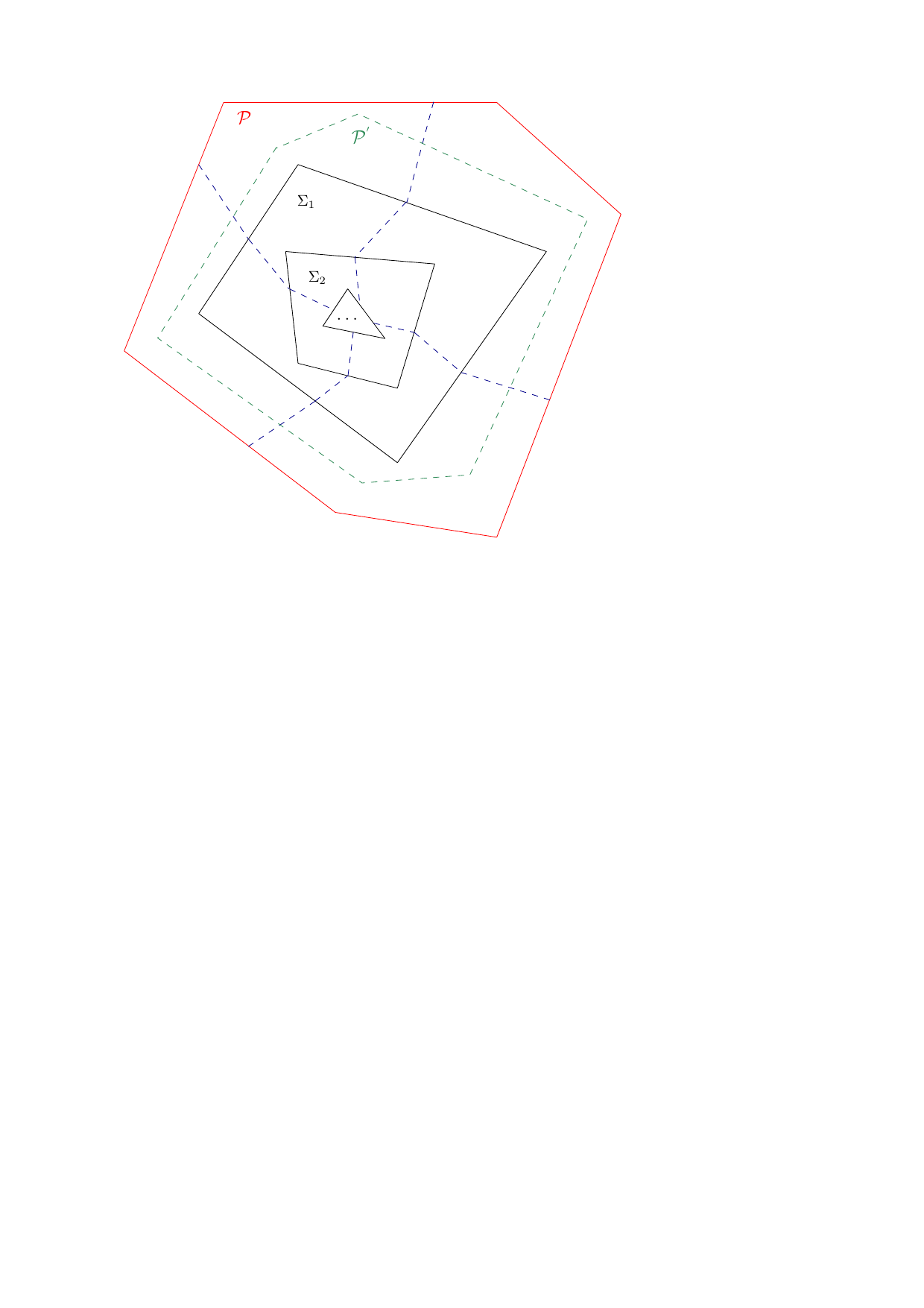}
\caption{A schematic illustration of network for polygonal-nest structure.}
\label{fig:network}
\end{figure}

\begin{proof}
Let $\mathcal P$ and $\mathcal P^{'}$ be   bounded, connected and convex polygons such that $\overline{\Omega}\subset \overline{\mathcal P{'}}\subset \overline{\mathcal P}$. Here, $\Omega$   has either a polygonal-nest structure or  polygonal-cell structure. First, by an argument analogous to the proof   in Lemma   \ref{2lem:reggularity}, let $\varphi\in C^{\infty}(\mathcal P)$ be a smooth cutoff function satisfying
$$
\varphi\equiv1 \mbox{ in } \Omega,\quad \varphi\equiv 0 \mbox{ in }\mathcal P \setminus  \mathcal P^{'}.
$$
Define $\tilde {\varphi}=u\varphi$. Using the conductive scattering problem \eqref{eq:sca}, we obtain that
\begin{align}
        \Delta\tilde{\varphi}=f\ \mbox{in}\ \Omega,\quad
        \Delta\tilde{\varphi}=g\ \mbox{in}\ \mathcal P\setminus{\overline{\Omega}},\quad
        \tilde{\varphi}=0\  \mbox{on}\ \partial \mathcal P.\notag
\end{align}
where $f=(-k^2qu^{-})\varphi+\Delta\varphi u^{-}+2\nabla u^{-}\cdot \nabla\varphi$ and $g=(-k^2u^{+})\varphi+\Delta\varphi u^{+}+2\nabla u^{+}\cdot \nabla\varphi$.  Recall that $u^{-}\in H^1({\Omega})$ and $u^{+}\in H^1(\mathbb R^2\setminus \overline{\Omega})$, it is clear that $f\in L^2(\Omega)$ and $g\in L^2(\mathcal P\setminus{\overline{\Omega}})$.
Moreover, through appropriate segmentation of $\mathcal P$ and $\Omega$,  these domains naturally constitute a polygonal network as defined in \cite[Definition 8.A]{DN}.
The polygonal-cell structure case follows immediately. Figure \ref{fig:network} provides a schematic illustration of the polygonal-nest structure case.  Moreover,  it is noted that $\tilde {\varphi}$ satisfies the continuity condition at the interface within the interior of $\mathcal P$.  Consequently, by  \cite[Theorem 10.28]{DN}, we conclude that  $\tilde{\varphi}\in H^2(\mathcal S_{\EM x_p,\rho})$ due to the convexity of $\mathcal S_{\EM x_p,\rho}$. Therefore, by virtue of $\varphi\equiv 1$ in $\Omega$,   one has $u^-\in H^2(\mathcal S_{\EM x_p,\rho})$. Thus,  the Sobolev embedding theorem  implies that $u^{-}\in C^{\alpha}(\overline{\mathcal S_{\EM x_p,\rho}}),\alpha\in (0,1]$.

The proof is complete.
\end{proof}

% is associated with a coupled PDE system, which shall be used in our proofs of Theorem \ref{2thm:un-eta-cell} and Theorem \ref{2thm:un-eta-nest}.

\begin{lem}\label{2thm:conductive}
Recall that $\mathcal S_{r_0},\Gamma_{r_0}^\pm$ and $B_{r_0}$ are defined in \eqref{1eq:sr0}.
Suppose that $u_1=u_1^{-}\chi_{\mathcal S_{r_0}}+u_1^{+}\chi_{B_{r_0}\setminus\overline{\mathcal S_{r_0}}}\in H^{1}(B_{r_0})$ and $u_2=u^{-}_2\chi_{\mathcal S_{r_0}}+u^{+}_{2}\chi_{B_{r_0}\setminus\overline{\mathcal S_{r_0}}}\in H^{1}(B_{r_0})$ satisfy  the following PDE system:
\begin{equation}\label{2-eq:conductive}
\begin{cases}
(\Delta+k^2q_{1})u_1^{-}=0,\hspace*{2.75cm} \mbox{in}\ \mathcal S_{r_0},\\
(\Delta+k^2 q_{2})u_2^{-}=0,\hspace*{2.75cm} \mbox{in}\ \mathcal S_{r_0},\\
(\Delta+k^2q_3)u^{+}_1=0,\hspace*{2.75cm} \mbox{in}\ B_{r_0}\setminus\overline{\mathcal S_{r_0}},\\
(\Delta+k^2q_3)u^{+}_2=0,\hspace*{2.75cm} \mbox{in}\ B_{r_0}\setminus\overline{\mathcal S_{r_0}},\\
u^{+}_1=u^{-}_1,\ \partial_{\nu}u^{-}_1=\partial_{\nu}u^{+}_1+\eta_1u^{+}_1,\hspace*{0.45cm} \mbox{on}\ \Gamma_{r_0}^\pm,\\
u^{+}_2=u^{-}_2,\ \partial_{\nu}u^{-}_2=\partial_{\nu}u^{+}_2+\eta_2 u^{+}_2,\hspace*{0.45cm} \mbox{on}\ \Gamma_{r_0}^\pm,\\
\end{cases}
\end{equation}
where $\eta_1$, $\eta_{2}$ {$ \in \mathbb C$} and $ q_1,\  q_{2},\ q_{3}$ with forms
\begin{align*}
 q_j=q_j^{(0)}+q_j^{(1)} x_1+q_j^{(2)} x_2,\quad j=1,2,3, %\quad q_2=q_2^0 +q_2^1  x_1+q_2^2  x_2.\label{2eq:form-q}
\end{align*}
where $q_j^{(0)}$, $q_j^{(1)}$ and $q_j^{(2)}$ {$\in \mathbb C$}, $j=1,2,3$.

{Assume that
\begin{equation}\label{2eq:admissible le33 1}
\lim_{\rho\to 0}\frac{1}{m( B(\mathbf x_p,\rho))}\int_{B(\mathbf x_p,\rho)} \vert u_i(\mathbf{x})\vert \mathrm d\mathbf x\neq 0,\ i=1\ \mbox{or}\ 2.
\end{equation}}
 %Additionally, if \eqref{2eq:admissible le33 2} holds, it is further required that $\vartheta_M-\vartheta_m \in(\frac{\pi}{3},\frac{2\pi}{3})\setminus \{\frac{\pi}{2}\}$, where $\vartheta_M-\vartheta_m$ is the opening angle  of $\mathcal S_{r_0}$.
If
 $$
 u^{+}_1=u^{+}_2\ \mbox{in}\ B_{r_0}\setminus\overline{\mathcal S_{r_0}},
 $$
%Moreover, if the admissibility condition in Definition \ref{2def:adm} is fulfilled,
%\begin{itemize}
%\item[(a)]
% $u(\mathbf 0)\not=0$;
%\item[(b)]
% $u(\mathbf 0)=0$, $\partial_1u(\mathbf 0)\not=0, \ \partial_2 u(\mathbf 0)=0$, and $\vartheta_M-\vartheta_m\not=\frac{1}{2}\arccos\frac{4}{5}$;
%\item[(c)]
%$u(\mathbf 0)=0$, $\partial_1u(\mathbf 0)=0, \ \partial_2 u(\mathbf 0)\not=0$;
%\item[(d)]
%$u(\mathbf 0)=0$, $\partial_1u(\mathbf 0)\not=0, \ \partial_2 u(\mathbf 0)\not=0$, and $\vartheta_{M}-\vartheta_m\not\in \{\frac{\pi}{3},\frac{\pi}{2},\frac{2\pi}{3}\}$;
%\end{itemize}
then  one has $\eta_1=\eta_2,\ q_1=q_{2}.$ Moreover, it holds that $u_1^{-}=u_2^{-}$ in $\mathcal S_{r_0}$.
\end{lem}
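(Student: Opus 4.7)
\textbf{Proof plan for Lemma \ref{2thm:conductive}.}

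\emph{Step 1: Reduction to a coupled PDE for the difference.} First I would exploit the assumption $u_1^+=u_2^+$ in $B_{r_0}\setminus\overline{\mathcal{S}_{r_0}}$. Taking traces across $\Gamma_{r_0}^\pm$ and invoking the conductive transmission conditions in \eqref{2-eq:conductive} yields $u_1^-|_{\Gamma_{r_0}^\pm}=u_2^-|_{\Gamma_{r_0}^\pm}$ and $(\partial_\nu u_1^--\partial_\nu u_2^-)|_{\Gamma_{r_0}^\pm}=(\eta_1-\eta_2)u_1^-|_{\Gamma_{r_0}^\pm}$. Setting $w:=u_1^--u_2^-$ and subtracting the first two PDEs of \eqref{2-eq:conductive} gives
\begin{equation*}
(\Delta+k^2q_1)w = k^2(q_2-q_1)u_2^-\ \text{in}\ \mathcal{S}_{r_0},\quad w|_{\Gamma_{r_0}^\pm}=0,\quad \partial_\nu w|_{\Gamma_{r_0}^\pm}=(\eta_1-\eta_2)u_1^-.
\end{equation*}
Lemma \ref{lem:holdernew} ensures $u_j^-$ is continuous up to $\mathbf{0}$, and combined with $u_1^-=u_2^-$ on $\Gamma_{r_0}^\pm$ this gives $u_1^-(\mathbf{0})=u_2^-(\mathbf{0})$; the admissibility hypothesis \eqref{2eq:admissible le33 1} then forces $u_1^-(\mathbf{0})=u_2^-(\mathbf{0})\neq 0$.

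\emph{Step 2: Show $\eta_1=\eta_2$ via CGO.} Since $q_1$ is a linear polynomial, its Sobolev extension $\widetilde{q_1}\in H^2(\mathbb{R}^2)$, so Lemma \ref{1-lem:cgo} produces a CGO solution $u_0=(1+\psi)e^{\rho\cdot\mathbf{x}}$ with $(\Delta+k^2q_1)u_0=0$ in $\mathcal{S}_{r_0}$. Multiplying the equation for $w$ by $u_0$ and applying Green's identity yields
\begin{equation*}
(\eta_1-\eta_2)\int_{\Gamma_{r_0}^\pm}\!u_1^-u_0\,d\sigma = k^2\!\int_{\mathcal{S}_{r_0}}(q_2-q_1)u_2^-u_0\,d\mathbf{x}-\int_{\Lambda_{r_0}}\!(u_0\partial_\nu w-w\partial_\nu u_0)\,d\sigma.
\end{equation*}
By Lemma \ref{2lem:reggularity}, $u_j^-\in C^{1,\alpha}$ near $\mathbf{0}$, so I can Taylor-expand $u_1^-=u_1^-(\mathbf{0})+O(|\mathbf{x}|)$. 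Choosing $\mathbf{d}^\perp$ as in \eqref{1-eq:d2} and using Proposition \ref{1-prop:some est}, the leading-order term on the left is $(\eta_1-\eta_2)u_1^-(\mathbf{0})(e^{\mathrm{i}\vartheta_m}+e^{\mathrm{i}\vartheta_M})\,\tau^{-1}\Gamma(1)$, while every remaining contribution is $O(\tau^{-13/6})$ or exponentially small, exactly as in Case (a) of the proof of Theorem \ref{2thm:un-shape}. Multiplying by $\tau$ and letting $\tau\to\infty$ forces $(\eta_1-\eta_2)u_1^-(\mathbf{0})(e^{\mathrm{i}\vartheta_m}+e^{\mathrm{i}\vartheta_M})=0$; since $|e^{\mathrm{i}\vartheta_m}+e^{\mathrm{i}\vartheta_M}|\neq 0$ for $\vartheta_M-\vartheta_m\in(0,\pi)$ and $u_1^-(\mathbf{0})\neq 0$, one has $\eta_1=\eta_2$.

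\emph{Step 3: Recover the three coefficients of $q_1-q_2$.} With $\eta_1=\eta_2$, the $\Gamma_{r_0}^\pm$-boundary term vanishes and the identity reduces to $k^2\!\int_{\mathcal{S}_{r_0}}(q_1-q_2)u_2^-u_0\,d\mathbf{x}=\int_{\Lambda_{r_0}}(u_0\partial_\nu w-w\partial_\nu u_0)\,d\sigma$. Writing $q_1-q_2=\delta q^{(0)}+\delta q^{(1)}x_1+\delta q^{(2)}x_2$ and expanding $u_2^-=u_2^-(\mathbf{0})+\partial_1u_2^-(\mathbf{0})x_1+\partial_2u_2^-(\mathbf{0})x_2+O(|\mathbf{x}|^{1+\alpha})$, I peel off coefficients successively. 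The leading area-integral behaviour is $k^2\delta q^{(0)}u_2^-(\mathbf{0})\tau^{-2}\,A_0$ for a nonzero geometric factor $A_0$ arising from $\int_{\mathcal{S}_{r_0}}e^{\rho\cdot\mathbf{x}}\,d\mathbf{x}$, while the $\Lambda_{r_0}$-term is exponentially small. Multiplying by $\tau^2$ and sending $\tau\to\infty$ gives $\delta q^{(0)}=0$. Then the integral is dominated by $k^2u_2^-(\mathbf{0})\!\int_{\mathcal{S}_{r_0}}(\delta q^{(1)}x_1+\delta q^{(2)}x_2)u_0\,d\mathbf{x}$ at order $\tau^{-3}$. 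Performing the same asymptotics for the two independent choices of $\mathbf{d}^\perp$ in \eqref{1-eq:d1} and \eqref{1-eq:d2} produces a $2\times 2$ linear system in $(\delta q^{(1)},\delta q^{(2)})$; its coefficient matrix is the direct analogue of $\mathcal{M}_1$ in Case (d) of Theorem \ref{2thm:un-shape}, whose determinant is proportional to $\sin^2(\vartheta_M-\vartheta_m)\cos(\vartheta_M-\vartheta_m)$, nonzero on $(0,\pi)\setminus\{\pi/2\}$. Hence $\delta q^{(1)}=\delta q^{(2)}=0$ and $q_1=q_2$.

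\emph{Step 4 and main obstacle.} Once $\eta_1=\eta_2$ and $q_1=q_2$, $w$ solves $(\Delta+k^2q_1)w=0$ in $\mathcal{S}_{r_0}$ with vanishing Cauchy data on $\Gamma_{r_0}^\pm$, and the classical weak UCP for second-order elliptic equations yields $w\equiv 0$, i.e.\ $u_1^-=u_2^-$ in $\mathcal{S}_{r_0}$. The most delicate part is the recursive microlocal analysis in Step 3: after eliminating $\eta$-contributions, one must carefully separate the constant and the two linear coefficients of $q_1-q_2$ at successive orders $\tau^{-2}$ and $\tau^{-3}$, proving that the gradient terms of $u_2^-$, the $C^{1,\alpha}$-remainder, and the $\Lambda_{r_0}$-boundary terms all lie at strictly smaller order than the coefficient being isolated. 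This requires both the $C^{1,\alpha}$-regularity from Lemma \ref{2lem:reggularity} and the sharp decay estimates of Proposition \ref{1-prop:some est}, and the opening-angle restriction $\vartheta_M-\vartheta_m\neq\pi/2$ (implicit in \eqref{2eq:admissible le33 1} via Definition \ref{2def:adm}) is precisely what guarantees invertibility of the $\mathcal{M}_1$-type matrix.
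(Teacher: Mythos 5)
Your overall strategy coincides with the paper's: form the difference $\tilde u=u_1^--u_2^-$, pair it with the CGO solution adapted to $q_1$ via Green's identity, peel off $\eta_1-\eta_2$, then $q_1^{(0)}-q_2^{(0)}$, then the two linear coefficients at the successive orders $\tau^{-1}$, $\tau^{-2}$, $\tau^{-3}$ using the two choices of $\mathbf d^\perp$, and finish with a Cauchy-data uniqueness argument (the paper invokes Holmgren's principle). Two points, however, need repair. First, the regularity you invoke: Lemma \ref{2lem:reggularity} requires the second field to solve the Helmholtz equation in a full ball (hence to be analytic there), whereas in the present lemma $u_1^+=u_2^+$ only solves $(\Delta+k^2q_3)u^+=0$ in $B_{r_0}\setminus\overline{\mathcal S_{r_0}}$; that lemma therefore does not apply, and $C^{1,\alpha}$ regularity of $u_j^-$ is not available. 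The paper instead uses Lemma \ref{lem:holdernew}, which gives only $u_j^-\in C^\alpha(\overline{\mathcal S_{r_0}})$ and the expansion $u_2^-=u_2^-(\mathbf 0)+\delta u_2^-$ with $|\delta u_2^-|\lesssim|\mathbf x|^\alpha$. This weaker expansion still suffices — the remainder contributes $O(\tau^{-(2+\alpha)})$ against the $\tau^{-2}$ term and $O(\tau^{-(3+\alpha)})$ against the $\tau^{-3}$ term — but the explicit gradient terms in your Step 3 must be absorbed into the H\"older remainder rather than expanded.

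Second, and more substantively, the $2\times2$ system for $(\delta q^{(1)},\delta q^{(2)})$ is not governed by the matrix $\mathcal M_1$ from Case (d) of Theorem \ref{2thm:un-shape}. The relevant quantities here are the sector (area) integrals $\int_{\mathcal S_{r_0}}x_j e^{\rho\cdot\mathbf x}\,\mathrm d\mathbf x$, not the boundary integrals over $\Gamma_{r_0}^\pm$ that produce $\mathcal M_1$; carried out in polar coordinates they yield the matrix $B$ of the paper with $\det(B)=-4(\cos(2\beta)-1)^2\mathrm i$, $\beta=\vartheta_M-\vartheta_m$, which is nonzero for every $\beta\in(0,\pi)$. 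Your claim that the needed restriction $\beta\neq\pi/2$ is ``implicit in \eqref{2eq:admissible le33 1}'' is false: that hypothesis is only the non-vanishing of $u_i$ at the corner and carries no angle restriction (the clause $\angle(\Gamma_p^-,\Gamma_p^+)\neq\pi/2$ in Definition \ref{2def:adm} is attached to the gradient condition \eqref{2eq:ad-2}, which is not assumed in this lemma). As written, your argument would leave the case $\vartheta_M-\vartheta_m=\pi/2$ unproved; the fix is simply to compute the sector integrals explicitly, which produces the invertible matrix above and removes any angle restriction.
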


\begin{proof}

Since $u^{+}_1=u_{2}^{+}$ in $B_{r_0}\setminus \overline{\mathcal S_{r_0}}$,  we have that $u^{+}_1=u^{+}_2\ \mbox{on}\ \Gamma_{r_0}^\pm$ by the trace theorem, which indicates that $u_1^{-}=u_1^{+}=u_2^{+}=u_2^{-}\mbox{on}\ \Gamma_{r_0}^\pm$ according to the transmission boundary condition. Without loss of generality, we assume that $u_2$ fulfills \eqref{2eq:admissible le33 1}. Denote $\tilde u:=u^{-}_{1}-u_2^{-}$, then it can be shown directly that
\begin{equation}\notag
\begin{cases}
\Delta\tilde u+k^2q_1\tilde u=k^2(q_2-q_1)u_2^{-},\ \hspace*{0.3cm}\mbox{in}\ \mathcal S_{r_0},\\
\tilde u=0,\ \partial_{\nu}\tilde u=(\eta_1-\eta_2)u_2^{-}\hspace*{1.1cm}\mbox{on}\ \Gamma_{r_0}^\pm.
\end{cases}
\end{equation}

Similar to the proof of Theorem \ref{2thm:un-shape}, we know that there exists the CGO solution $u_0$ given by \eqref{1-eq:cgo}, which satisfies \eqref{eq:cgo par} and
\begin{equation}\notag %\label{eq:cgo par1}
(\Delta+k^2 q_1)u_0=0\mbox{ in } \mathcal S_{r_0}.
\end{equation}
Using Green's formula, we have the following integral identity
\begin{align}\label{2eq:greenid lemm33}
\int_{\mathcal S_{r_0}}k^2(q_2-q_1)u_2^{-}u_{0}\mathrm d\mathbf x=(\eta_1-\eta_2)\int_{\Gamma_{r_0}^\pm}u_2^{-}u_0\mathrm d\sigma+\int_{\Lambda_{r_0}}u_0\partial_{\nu}\tilde u-\tilde u\partial_{\nu}u_0\mathrm d\sigma.
\end{align}

{By virtue of Lemma \ref{lem:holdernew}, we know that $u_i^{-}\in C^\alpha(\overline{\mathcal S_{r_0}}),\alpha\in (0,1],i=1,2.$
}
%{First, we demonstrate that the solution  $u_i^{-}\in H^1(\mathcal S_{r_0}),i=1,2$  of  the system \eqref{2-eq:conductive} belong to $C^\alpha(\overline {\mathcal S_{r_0}}),\alpha\in(0,1)$. From \cite{LTsou,NS}, we know that $u_i^{-}$  admits a decomposition into a regular part and a singular part in the following form:
%$$u_i^{-}=u_{i,reg}^{-}+u_{i,sing}^{-}.$$
%%where $r,\theta$ corresponding to the polar coordinates in $\mathcal S_{r_0}$, and there exits explicit formulas to calculate the coefficients $\mathcal M$, the constant $\alpha$ depends.
%The regular part $u_{i,reg}^{-}\in H^{2}(\mathcal S_{r_0})$  and hence by the Sobolev embedding theorem, it follows that  $u_{i,reg}^{-}\in C^{\beta}(\overline{\mathcal S_{r_0}} ), \beta\in (0,1]$. Moreover, $u_{{i,sing}}^{-}\in C^{\alpha}(\overline{\mathcal S_{r_0}})$, where $\alpha\in(0,1)$ is the H\"older exponent depending on the opening angle of $\mathcal S_{r_0}$. Therefore, $u_i^{-}\in C^{\alpha}(\overline{\mathcal S_{r_0}}), \alpha\in(0,1).$ %,q_2\in C^{1,\alpha}(\overline{\mathcal S_{r_0}}).$
Thus,  $u_2^{-}$ satisfies the following expansion:
\begin{align}\label{eq:u2- ex}
u_2^{-}&=u_2^{-}(\mathbf 0) +\delta u_2^{-},\ \vert \delta u_2^{-}\vert \leq \vert \mathbf x\vert ^{\alpha }\|u_2^{-}\|_{C^{\alpha }}.
%\tilde q:&=q-q^{'}=\tilde q(\mathbf 0)+\delta \tilde q,\ \vert \delta\tilde q\vert \leq \vert \mathbf x\vert ^{1+\alpha}\|\tilde q\|_{C^{1,\alpha}},\notag\\
%\gamma:&=(q-q^{'})u_{-}=\gamma(\mathbf 0)+\delta\gamma,\ \vert \delta\gamma\vert \leq \vert \mathbf x\vert ^{1+\alpha_1}\|\gamma\|_{C^{1,\alpha_1}}.\notag
\end{align}
Now, we are going to prove that $\eta_1=\eta_2$ and $q_1=q_2$ in two parts.

\medskip
\noindent\textbf{Part 1: $\eta_1=\eta_2$}.
Using \eqref{2eq:greenid lemm33} and \eqref{eq:u2- ex}, we can directly  see that
\begin{equation}\label{2eq:case1eta}
(\eta_2-\eta_1)u_2^{-}(\mathbf 0)\int_{\Gamma_{r_0}^\pm}e^{\rho\cdot \mathbf x}\mathrm d\sigma=\sum_{i=1}^{4}I_i
\end{equation}
where
\begin{align}
I_1&=(\eta_1-\eta_2)u_2^{-}(\mathbf 0)\int_{\Gamma_{r_0}^\pm}\psi(\mathbf x)e^{\rho\cdot \mathbf x}\mathrm d\sigma,
I_2=\int_{\mathcal S_{r_0}}k^2(q_1-q_2)u_2^{-}u_0\mathrm d\mathbf x,\notag\\
I_3&=(\eta_1-\eta_2)\int_{\Gamma_{r_0}^\pm} \delta u_2^{-}u_0(\mathbf x)\mathrm d\sigma,
I_4 =\int_{\Lambda_{r_0}}u_0\partial_{\nu}\tilde u-\tilde u \partial_\nu u_0\mathrm d\sigma.\notag
\end{align}
By virtue of \eqref{2eq:u-01} and \eqref{2eq:u-02}, we can derive that the right-hand side of \eqref{2eq:case1eta} satisfies
$$\vert RHS \vert \lesssim \tau^{-\frac{7}{6}}+\tau^{-(2+\alpha)}+\mathcal O(\tau^{-1}e^{-\frac{1}{2}\varsigma r_0\tau})+(1+\tau)(1+\tau^{-\frac{2}{3}})e^{-\varsigma r_0\tau}.$$

%Then by Proposition \ref{1-prop:some est}, it can  directly be derived that
%\begin{align}
%|I_1|&\lesssim \tau^{{-\frac{7}{6}}}+\mathcal (\tau^{-\frac{7}{6}}e^{-\frac{1}{2}\varsigma r_0\tau}),\notag\\
%|I_2|&\lesssim \tau^{-\frac{29}{12}}+\tau^{-2}+\mathcal O(\tau^{-1}e^{-\frac{1}{2}\varsigma r_0\tau}),\label{2eq:estI1}\\
%|I_3|&\lesssim \tau^{-2}+\tau^{-(2+\alpha)}+\mathcal O(\tau^{-1}e^{-\frac{1}{2}\varsigma r_0\tau }),\notag
%\end{align}
%and
%\begin{align}\notag
%\vert I_4\vert&\leq \|u_0\|_{H^{1/2}(\Lambda_{r_0})}\|\partial_{\nu}\tilde u\|_{H^{-1/2}(\Lambda_{r_0})}+\|\partial_{\nu}u_0\|_{L^2(\Lambda_{r_0})}\|\tilde u\|_{L^2(\Lambda_{r_0})}\notag\\
%&\leq C(\|u_0\|_{H^1(\Lambda_{r_0})}+\|\nabla u_0\|_{L^2(\Lambda_{r_0})})\label{2eq:partial est}\\
%&\lesssim(1+\tau)(1+\tau^{-\frac{2}{3}})e^{-\varsigma r_0\tau}.\notag
%\end{align}
By \eqref{1-eq:d2} and \eqref{2eq:u-03}, one has the left-hand side of \eqref{2eq:case1eta} satisfying
\begin{align}
\vert LHS\vert  \gtrsim
\vert (\eta_2-\eta_1)u_2^{-}(\mathbf 0)\vert \frac{\Gamma(1)\vert e^{\mathrm i\vartheta_m}+e^{\mathrm i\vartheta_M}\vert }{ \vert \tau e^{\mathrm i\varphi} \vert }-\mathcal O(\tau^{-1}e^{-\frac{1}{2}\lambda r_0 \tau}).\label{2eq:I}
\end{align}
%where $\mathcal{O}\left(\frac{1}{\tau}e^{-\frac{1}{2}\lambda r_0 \tau}\right)$ represents the same term as the last term in \eqref{1-eq:1est}.
%Substituting \eqref{2eq:estI1}-\eqref{2eq:I} into \eqref{2eq:case1eta}, then multiplying  both sides of the expression by $\tau$ and letting $\tau \to \infty$, we can  obtain that $\vert (\eta_2-\eta_1)u_2^{-}(\mathbf 0)\vert =0,$
%since
% $$\vert e^{\mathrm i\vartheta_m}+e^{\mathrm i\vartheta_M}\vert =\sqrt{2+2\cos(\vartheta_M-\vartheta_m)}\not=0,\mbox{for}\ \vartheta_M-\vartheta_m\in(0,\pi)$$
%Hence $\eta_1=\eta_2$ due to $u_2(\mathbf 0)\not=0.$
Similar to the proof of \textbf{Case(a)} in Theorem \ref{2thm:un-shape}, we have
$$|(\eta_1-\eta_2)u_2^{-}(\mathbf 0)|=0,\ \mbox{for} \ \vartheta_M-\vartheta_m\in(0,\pi),$$
which indicates that $\eta_1=\eta_2$ since  $u_2^{-}(\mathbf 0)\neq0. $

\medskip
\noindent\textbf{Part 2: $q_1=q_2$}.  Using $\eta_1=\eta_2$, the integral identity \eqref{2eq:case1eta} can be rewritten as
\begin{equation}\label{eq:int lem3 q0}
k^2(q_2^{(0)}-q_1^{(0)} )u_2^{-}(\mathbf 0)\int_{\mathcal S_{r_0}}e^{\rho\cdot \mathbf x}\mathrm d\mathbf x=\sum_{i=1}^4 \mathcal P_i+I_4,
\end{equation}
where
\begin{align}
\mathcal P_1&=k^2(q_1^{(0)}-q_2^{(0)})u_2^{-}(\mathbf 0)\int_{\mathcal S_{r_0}}\psi(\mathbf x)e^{\rho\cdot \mathbf x}\mathrm d\mathbf x,\
\mathcal P_2=k^2(q_1^{(0)}-q_2^{(0)}) \int_{\mathcal S_{r_0}} \delta u_2^{-}u_0\mathrm d\mathbf x,\notag\\
\mathcal P_3&=k^2(q_1^{(1)}-q_{2}^{(1)})u_2^{-}(\mathbf 0)\int_{\mathcal S_{r_0} }x_1u_0\mathrm d\mathbf x+k^2(q_1^{(2)}-q_2^{(2)})u_2^{-}(\mathbf 0)\int_{\mathcal S_{r_0} }x_2u_0\mathrm d\mathbf x,\notag\\
\mathcal P_4&=k^2(q_1^{(1)}-q_2^{(1)})u_2^{-}(\mathbf 0)\int_{\mathcal S_{r_0} }x_1  \delta u_2^{-}u_0\mathrm d\mathbf x+k^2(q_1^{(2)}-q_{2}^{(2)})u_2^{-}(\mathbf 0)\int_{\mathcal S_{r_0} }x_2   \delta u_2^{-}u_0\mathrm d\mathbf x,\notag
\end{align}
and $I_4$ is defined in \eqref{2eq:case1eta}.
Then by Proposition \ref{1-prop:some est}, it is directly shown that
\begin{align}
|\mathcal P_1|&\lesssim \tau^{-29/12}+\mathcal O(\tau^{-1}e^{-1/2\varsigma r_0\tau}),\ |\mathcal P_2|\lesssim \tau^{-3}+\tau^{-(\alpha+3)}+\mathcal O(\tau^{-1}e^{-1/2\varsigma r_0\tau}),\notag\\
|\mathcal P_3|&\lesssim  \int_{\mathcal S_{r_0}} |x|| u_0|\mathrm d\mathbf x \lesssim \tau^{-3}+\mathcal O(\tau^{-1}e^{-1/2\varsigma r_0\tau}),\label{2eq:q01}\\
|\mathcal P_4|&\lesssim  \int_{\mathcal S_{r_0}}\vert x\vert \vert \tilde \delta u_2^{-}u_0\vert \mathrm d\mathbf x
\lesssim \tau^{-4}+\tau^{-(\alpha+4)}+O(\tau^{-1}e^{-1/2\varsigma r_0\tau}).\notag
\end{align}

Let $\mathbf d^\perp$ be in the form of \eqref{1-eq:d2}, we find that the left-hand side of \eqref{eq:int lem3 q0} satisfies
\begin{equation}\label{2eq:q0}
 LHS=k^2(q_0^{(2)}-q_0^{(1)})u_2^{-}(\mathbf 0)\frac{\Gamma(2)(e^{2\mathrm i\vartheta_M}-e^{2\mathrm i\vartheta_m})}{\tau^2e^{2\mathrm i\varphi}}+\mathcal O(\tau^{-1}e^{-\frac{1}{2}\lambda r_0\tau}).
 \end{equation}
Similarly, by combining \eqref{eq:int lem3 q0}-\eqref{2eq:q0}, as $\tau\to \infty$, we can obtain
$$\vert k^2(q_0^{(2)}-q_0^{(1)})u_2^{-}(\mathbf 0)\vert =0,$$
since $\vert e^{2\mathrm i\vartheta_M}-e^{2\mathrm i\vartheta_m}\vert \neq 0$ when $\vartheta_M-\vartheta_m\in (0,\pi)$. Therefore, $q_0^{(2)}=q_0^{(1)}$ holds due to $ u_2^{-}(\mathbf 0)\neq 0$.

Using $q_1^{(0)}=q_2^{(0)}$, from \eqref{eq:int lem3 q0}, we have the following integral identity
\begin{align}
k^2(q_2^{(1)}-q_1^{(1)})u_2^{-}(\mathbf 0)\int_{\mathcal S_{r_0}}x_1e^{\rho\cdot \mathbf x}\mathrm d\mathbf x
+
k^2(q_2^{(2)}-q_1^{(2)})u_2^{-}(\mathbf 0)\int_{\mathcal S_{r_0}}x_2e^{\rho\cdot \mathbf x}\mathrm d\mathbf x=\sum_{i=1}^2\mathcal E_i+I_4,\label{2eq:q1q2}
\end{align}
where
\begin{align}
\mathcal E_1&=k^2(q_1^{(1)}-q_2^{(1)})u_2^{-}(\mathbf 0)\int_{\mathcal S_{r_0}}x_1\psi(\mathbf x)e^{\rho\cdot \mathbf x}\mathrm d\mathbf x
+k^2(q_1^{(2)}-q_2^{(2)})u_2^{-}(\mathbf 0)\int_{\mathcal S_{r_0}}x_2\psi(\mathbf x)e^{\rho\cdot \mathbf x}\mathrm d\mathbf x,\notag\\
\mathcal E_2&=k^2(q_1^{(1)}-q_2^{(1)})\int_{\mathcal S_{r_0}}x_1 \delta u_2^{-}u_0\mathrm d\mathbf x
+k^2(q_1^{(2)} -q_2^{(2)})\int_{\mathcal S_{r_0}}x_2  \delta u_2^{-}u_0\mathrm d\mathbf x.\notag
\end{align}
Utilizing  Proposition \ref{1-prop:some est}, it can be calculated  estimates about $\mathcal E_1$ and $\mathcal E_2$ as follows,
\begin{align}
\vert \mathcal E_1\vert &\lesssim\int_{\mathcal S_{r_0}}\vert \mathbf x\vert \vert  \psi(\mathbf x) e^{\rho\cdot \mathbf x}\vert \mathrm d\mathbf x
\lesssim \tau^{-\frac{41}{12}}+\mathcal O(\tau^{-1}e^{-\frac{1}{2}\varsigma r_0\tau}),\notag\\
\vert \mathcal E_2\vert &\lesssim \int_{\mathcal S_{r_0}} \vert \mathbf x\vert |  \delta u_2^{-}|\vert u_0\vert \mathrm d\mathbf x
\lesssim \tau^{-(2+\alpha)}+\tau^{-(\frac{41}{12}+\alpha)}+\mathcal O(\tau^{-1}e^{-\frac{1}{2}\varsigma r_0\tau}).\label{2eq:este1}
\end{align}
 Then we can write the left-hand side of \eqref{2eq:q1q2} as
 \begin{align}
 \overline {q}_1\int_{\vartheta_m}^{\vartheta_M}\mathrm d\vartheta\int_{0}^{r_0}r^2\cos\vartheta e^{-\tau e^{\mathrm i(\vartheta-\varphi)r}}\mathrm dr+\overline q_2 \int_{\vartheta_m}^{\vartheta_M}\mathrm d\vartheta\int_{0}^{r_0}r^2\sin\vartheta e^{-\tau e^{\mathrm i(\vartheta-\varphi)r}}\mathrm dr,\label{2eq:d1q}
 \end{align}
 when $\mathbf d^\perp$ is selected in the form described in \eqref{1-eq:d1}, and
  \begin{align}
 \overline q_1\int_{\vartheta_m}^{\vartheta_M}\mathrm d\vartheta\int_{0}^{r_0}r^2\cos\vartheta e^{-\tau e^{\mathrm i(\varphi-\vartheta)r}}\mathrm dr+\overline q_2 \int_{\vartheta_m}^{\vartheta_M}\mathrm d\vartheta\int_{0}^{r_0}r^2\sin\vartheta e^{-\tau e^{\mathrm i(\varphi-\vartheta)r}}\mathrm dr,\label{2eq:d2q}
 \end{align}
  when $\mathbf d^\perp$ is selected in the form of \eqref{1-eq:d2}, where $\overline q_i:=k^2(q_2^{(i)}-q_1^{(i)} )u_2^{-}(\mathbf 0),i=1,2$.

Utilizing \eqref{1-eq:some est1}, the first term of \eqref{2eq:d1q} can be expressed as:
\begin{align}
\frac{\Gamma(3)\overline {q}_1e^{3\mathrm i\varphi}}{\tau^3}&\int_{\vartheta_m}^{\vartheta_M}\cos\vartheta e^{-3\mathrm i\vartheta}\mathrm d\vartheta
+\mathcal O(\tau^{-1}e^{-\frac{1}{2}\varsigma r_0\tau})\notag\\
&=\frac{\Gamma(3)e^{3\mathrm i\varphi}}{\tau^3}\cdot \frac{1}{8}\left\{(-\sin \vartheta+3\mathrm i\cos\vartheta)e^{-3\mathrm i\vartheta} \right\}\bigg \vert_{\vartheta_m}^{\vartheta_M}+\mathcal O(\tau^{-1}e^{-\frac{1}{2}\lambda r_0\tau})\label{2eq:L1}.
\end{align}
The second term can be simplified as:
\begin{align}
\frac{\Gamma(3)\overline {q}_2e^{3\mathrm i\varphi}}{\tau^3}&\int_{\vartheta_m}^{\vartheta_M}\sin\vartheta e^{-3\mathrm i\vartheta}\mathrm d\vartheta
+\mathcal O(\tau^{-1}e^{-\frac{1}{2}\varsigma r_0\tau})\notag\\
&=\frac{\Gamma(3)e^{3\mathrm i\varphi}}{\tau^3}\cdot \frac{1}{8} \left\{ (\cos\vartheta+3\mathrm i\sin\vartheta)e^{-3\mathrm i\vartheta}\right\}  \bigg \vert_{\vartheta_m}^{\vartheta_M}+\mathcal O(\tau^{-1}e^{-\frac{1}{2}\lambda r_0\tau})\label{2eq:L2}.
\end{align}
Similarly, the first term of \eqref{2eq:d2q} can be expressed as:
\begin{align}
\frac{\Gamma(3)\overline {q}_1e^{-3\mathrm i\varphi}}{\tau^3}&\int_{\vartheta_m}^{\vartheta_M}\cos\vartheta e^{3\mathrm i\vartheta}\mathrm d\vartheta
+\mathcal O(\tau^{-1}e^{-\frac{1}{2}\varsigma r_0\tau})\notag\\
&=\frac{\Gamma(3)e^{-3\mathrm i\varphi}}{\tau^3}\cdot \frac{1}{8}\left\{ -(\sin\vartheta+3\mathrm i\cos\vartheta)e^{3\mathrm i\vartheta}\right\}  \bigg \vert_{\vartheta_m}^{\vartheta_M}+\mathcal O(\tau^{-1}e^{-\frac{1}{2}\lambda r_0\tau})\label{2eq:L3}.
\end{align}
The second term of \eqref{2eq:d2q} can be written as:
\begin{align}
\frac{\Gamma(3)\overline {q}_2e^{-3\mathrm i\varphi}}{\tau^3}&\int_{\vartheta_m}^{\vartheta_M}\sin\vartheta e^{3\mathrm i\vartheta}\mathrm d\vartheta
+\mathcal O(\tau^{-1}e^{-\frac{1}{2}\varsigma r_0\tau})\notag\\
&=\frac{\Gamma(3)e^{-3\mathrm i\varphi}}{\tau^3}\cdot \frac{1}{8}\left\{ (\cos\vartheta-3\mathrm i\sin\vartheta)e^{3\mathrm i\vartheta}\right\}  \bigg \vert_{\vartheta_m}^{\vartheta_M}+\mathcal O(\tau^{-1}e^{-\frac{1}{2}\lambda r_0\tau})\label{2eq:L4}.
\end{align}

Combining \eqref{2eq:este1}-\eqref{2eq:L4} with \eqref{2eq:q1q2}, one has
\begin{align}
\bigg \vert&\frac{\Gamma(3)e^{3\mathrm i\varphi}}{8\tau^3}\left\{ \overline q_1 \left ((-\sin \vartheta+3\mathrm i\cos\vartheta)e^{3 \mathrm i\vartheta} \right)+\overline q_2 \left( (\cos\vartheta+3\mathrm i\sin\vartheta)e^{-3\mathrm i\vartheta}\right)  \right\}\bigg \vert_{\vartheta_m}^{\vartheta_M} \bigg \vert\label{2eq:TL1}\\
&\leq \tau^{-\frac{41}{12}}+\tau^{-(3+\alpha)}+(1+\tau)(1+\tau^{2/3})e^{-\varsigma r_0\tau}
+\mathcal O(\tau^{-1}e^{-\frac{1}{2}\varsigma r_0\tau})+\mathcal O(\tau^{-1}e^{-\lambda r_0\tau})\notag
\end{align}
and
\begin{align}
\bigg \vert &\frac{\Gamma(3)e^{-3\mathrm i\varphi}}{8\tau^3}\left\{ \overline q_1 \left (-(\sin\vartheta+3\mathrm i\cos\vartheta)e^{3\mathrm i\vartheta} \right)+\overline q_2 \left( (\cos\vartheta-3\mathrm i\sin\vartheta)e^{3\mathrm i\vartheta}\right)  \right\}\bigg \vert_{\vartheta_m}^{\vartheta_M}\bigg \vert \label{2eq:TL2} \\
&\leq\tau^{-\frac{41}{12}}+\tau^{-(3+\alpha)}+(1+\tau)(1+\tau^{2/3})e^{-\varsigma r_0\tau}
+\mathcal O(\tau^{-1}e^{-\frac{1}{2}\varsigma r_0\tau})+\mathcal O(\tau^{-1}e^{-\lambda r_0\tau})\notag.
\end{align}

By multiplying both sides of \eqref{2eq:TL1} by $\frac{8\tau^3}{\Gamma(3)e^{3\mathrm i\varphi}}$, \eqref{2eq:TL2} by $\frac{8\tau^3e^{3\mathrm i\varphi}}{\Gamma(3)}$, and letting $\tau\to \infty$, we obtain
\begin{equation}\notag
B \left(\begin{matrix}
\overline q_1\\
\overline q_2
\end{matrix}\right)=\mathbf 0,
\end{equation}
where
\begin{equation}\notag
B=
\left(\begin{matrix}
(-\sin\vartheta+3\mathrm i\cos\vartheta)e^{-3\mathrm i\vartheta}\vert_{\vartheta_m}^{\vartheta_M} &
(\cos\vartheta+3\mathrm i\sin\vartheta)e^{-3\mathrm i\vartheta}\vert_{\vartheta_m}^{\vartheta_M} \\
-(\sin\vartheta+3\mathrm i\cos\vartheta)e^{3\mathrm i\vartheta}\vert_{\vartheta_m}^{\vartheta_M} &
(\cos\vartheta-3\mathrm i\sin\vartheta)e^{3\mathrm i\vartheta}\vert_{\vartheta_m}^{\vartheta_M}
\end{matrix}\right).
\end{equation}
Furthermore, it can be calculated that
\begin{align}
\det(B)&=(20\sin(3\beta)\sin\beta+12\cos(3\beta)\cos\beta-12)\mathrm i\notag\\
&=-4(\cos(2\beta)-1)^2\mathrm i\not=0,\  \beta=\vartheta_M-\vartheta_m\in(0,\pi).\notag
\end{align}
Hence we get that  $\overline q_1=0$ and $\overline q_2=0$, which implies that $q_1^{(1)}=q_2^{(1)}$ and $q_1^{(2)}=q_2^{(2)}$ by using $ u_2^{-}(\mathbf 0)\not =0.$

\medskip

\bigskip

Finally, we shall prove $u_1^{-}=u_2^{-}  $ in $\mathcal S_{r_0}$. By the fact that  $\eta_1=\eta_2$ and $q_1=q_2$ and the boundary condition, then we have
\begin{equation}\notag
\begin{cases}
\Delta w+k^2 q w=0 \hspace*{0.4cm}\mbox{in}\ \mathcal S_{r_0},\\
w=\partial_\nu  w=0 \hspace*{0.9cm}\mbox{on}\ \Gamma_{r_0}^\pm,
\end{cases}
\end{equation}
where $w=u_2^{-}-u_1^{-}.$ Using Holmgren's  principle (cf.\cite{CK}), we have $w=0 $ in $\mathcal S_{r_0}$.

The proof is complete.
%So far, the proof of Lemma \ref{2thm:conductive} is complete.
\end{proof}

\section{The proofs of Theorems \ref{2thm:un-eta-cell} and \ref{2thm:un-eta-nest}}\label{sec:4}

\begin{proof}[Proof of Theorem \ref{2thm:un-eta-cell}]

In accordance with Theorem \ref{2thm:un-eta-cell},  the shape of the conductive medium scatterer $\Omega$ and the corresponding polygonal-cell structures $\Sigma_i,i=1,\dots,N$, are known in advance. We establish uniqueness results for determining the physical parameters of the refractive index and the conductive boundary parameter through a proof by contradiction. Assume that $\tilde\eta_1\not=\tilde \eta_2$ or there is an index $i_0$ such that $q_{i_0,1}\not=q_{i_0,2}$. From Definition \ref{1-def:cell}, we know that there exists a vertex $\mathbf x_{p_0}\in \Sigma_{i_0}$ intersected by two adjacent edges $\Gamma^{\pm}$ of $\Omega$. Since $-\Delta$ is invariant under rigid motion, without loss of generality, we assume that $\mathbf x_{p_0}$ coincides with the origin. Let $r_0\in\mathbb R_{+}$ be sufficiently small such that $\mathcal S_{r_0}=\Omega\cap B_{r_0}(\mathbf 0)$ and $\Gamma_{r_0}^\pm=\partial \Omega\cap B_{r_0}(\mathbf 0)$. Denote $W=\mathbb R^{2}\setminus\overline{\Omega},$ then we have $\Gamma_{r_0}^\pm\subset\partial  W$, see Figure \ref{2fig:cell2} for a schematic illustration.
\begin{figure}[htpb]
\centering
\includegraphics[width=7cm]{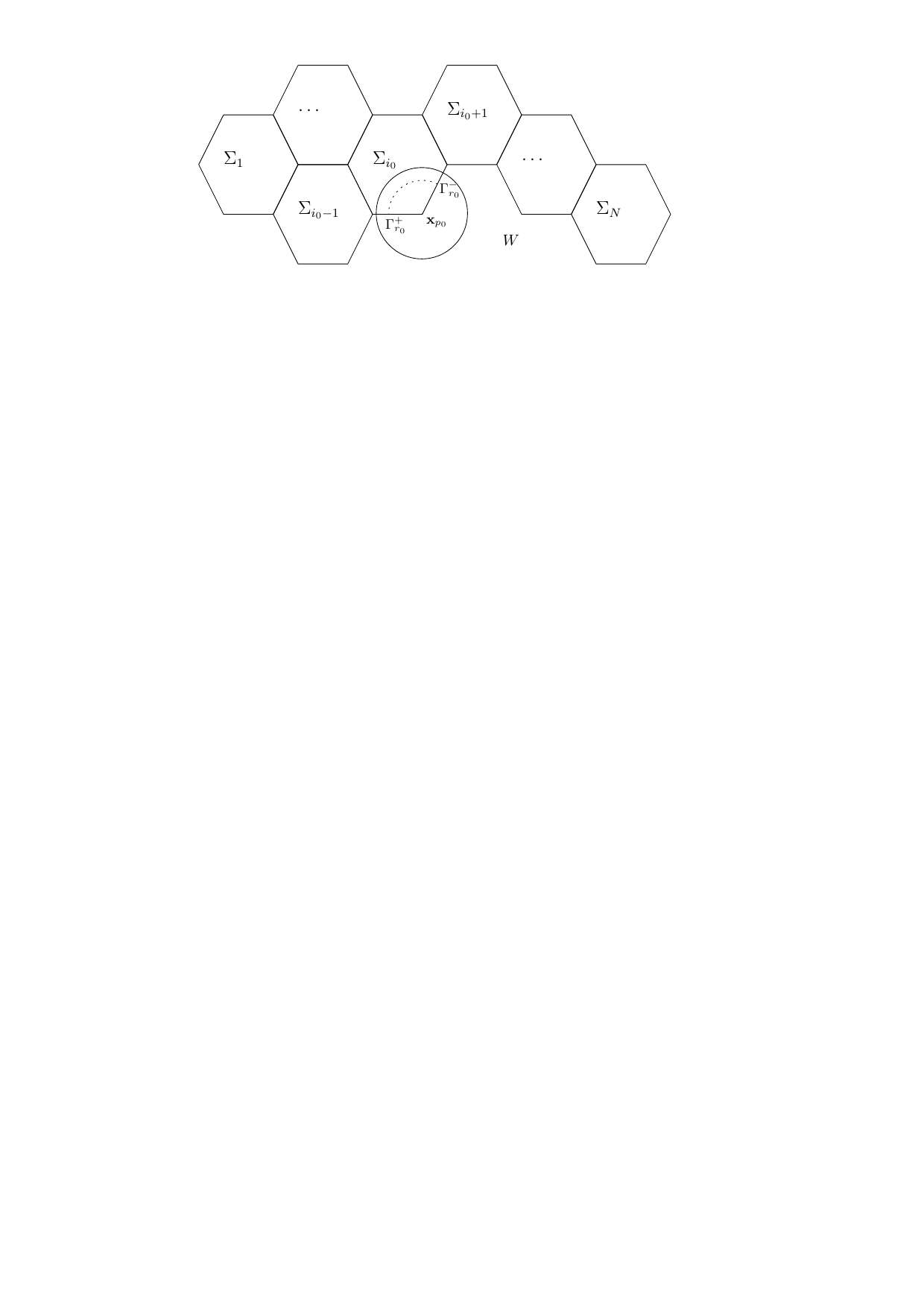}
\caption{Schematic illustration of the polygonal-cell structure.}
\label{2fig:cell2}
\end{figure}

Recall that $u_1$ and $u_2$ are the total wave fields associated with the admissible conductive  polygonal-cell medium scatterers $(\Omega;q_\ell,\eta_\ell),\ell=1,2$.  Since $u_1^{\infty}(\mathbf {\hat{x}};u^i)=u_2^\infty(\mathbf {\hat x};u^i)$ for all $\mathbf {\hat{x}}\in \mathbb S^{1},$  by Rellich lemma and the unique continuation principle, we have
\begin{equation}\notag%\label{2eq:cellun}
u_1=u_2\ \mbox{in}\ B_{r_0}\setminus \overline{\mathcal S_{r_0}}\subset W.
\end{equation}
Utilizing the conductive transmission boundary conditions in \eqref{eq:sca}, one can claim that
\begin{equation}\notag%\label{2eq:cellun2}
\begin{cases}
u_1^{+}=u_1^{-},\ \partial_\nu u_1^{-}=\partial_\nu u_1^{+}+\tilde \eta_1 u_1^{+},\ \mbox{on}\ \Gamma_{r_0}^{\pm},\\
u_2^{+}=u_2^{-},\ \partial_\nu u_2^{-}=\partial_\nu u_2^{+}+\tilde \eta_2 u_2^{+},\ \mbox{on}\ \Gamma_{r_0}^{\pm}.
\end{cases}
\end{equation}
Moreover, it holds that
\begin{equation}\notag%\label{2eq:cellun3}
\begin{cases}
\Delta u_1^{-}+k^2q_{i_0,1}u_1^{-}=0,\ \hspace*{0.5cm}\mbox{in}\ \mathcal S_{r_0},\\
\Delta u_1^{+}+k^2u_1^{+}=0,  \hspace*{1.3cm}\mbox{in}\ B_{r_0}\setminus \overline{\mathcal S_{r_0}},\\
\Delta u_2^{-}+k^2q_{i_0,2}u_2^{-}=0,\ \hspace*{0.5cm}\mbox{in}\ \mathcal S_{r_0},\\
\Delta u_2^{+}+k^2u_2^{+}=0,  \hspace*{1.3cm}\mbox{in}\ B_{r_0}\setminus \overline{\mathcal S_{r_0}}.
\end{cases}
\end{equation}
Recall that $\Omega$ is an admissible conductive polygonal-cell medium scatterer. Then by virtue of Lemma \ref{2thm:conductive}, we have
$$q_{i_0,1}=q_{i_0,2}\ \mbox{and}\ \tilde \eta_1=\tilde \eta_2,$$
which is a contradiction.

The proof is complete.
\end{proof}

\begin{proof}[The proof of Theorem \ref{2thm:un-eta-nest}.]
We prove this theorem by mathematical induction. By Theorem \ref{2thm:un-shape}, we have $\partial \Omega_1=\partial \Omega_2$, which indicates that $\partial \Sigma_{1,1}=\partial \Sigma_{1,2}.$ Taking a similar argument as proving Theorem \ref{2thm:un-eta-cell}, it can be shown that $q_{1,1}=q_{1,2}$ and $\eta_{1,1}=\eta_{1,2}$. Suppose that there exists an index $n_0\in \mathbb N_{+}\setminus\{1\}$ such that
$$\partial \Sigma_{i,1}=\partial \Sigma_{i,2},\ q_{i,1}=q_{i,2},\ \eta_{i,1}=\eta_{i,2},\ i=1,\dots,n_0-1.$$
With the help of  Lemma \ref{2thm:conductive}, we can recursively obtain that
\begin{equation}\label{2eq:ul}
u_{i,1}=u_{i,2}\ \mbox{in}\ \mathcal D_{i}=\Sigma_{i}\setminus\overline{\Sigma_{i+1}},\ i=1,\dots,n_0-1
\end{equation}
by making use of $u_1^\infty(\mathbf {\hat x};u^i)=u_2^\infty(\mathbf {\hat{x}};u^i)$, where $u_{i,1}=u_1|_{\mathcal D_i}$ and $u_{i,2}=u_2|_{\mathcal D_i}$, with $u_1$ and $u_2$ being the total wave fields associated with the admissible conductive polygonal-nest medium scatterers $(\Omega_\ell;q_\ell,\eta_\ell),\ell=1,2.$ We divide the rest proof into two parts.

\medskip
 \noindent\textbf{Part 1}. We first prove that $\partial \Sigma_{n_0,1}=\partial \Sigma_{n_0,2}$.  According to \eqref{2eq:ul},  using the  similar argument in proving Theorem \ref{2thm:un-shape}, one can prove  $\partial \Sigma_{n_0,1}=\partial \Sigma_{n_0,2}$ directly by contradiction.

\begin{figure}[htpb]
\centering
\includegraphics[width=6cm]{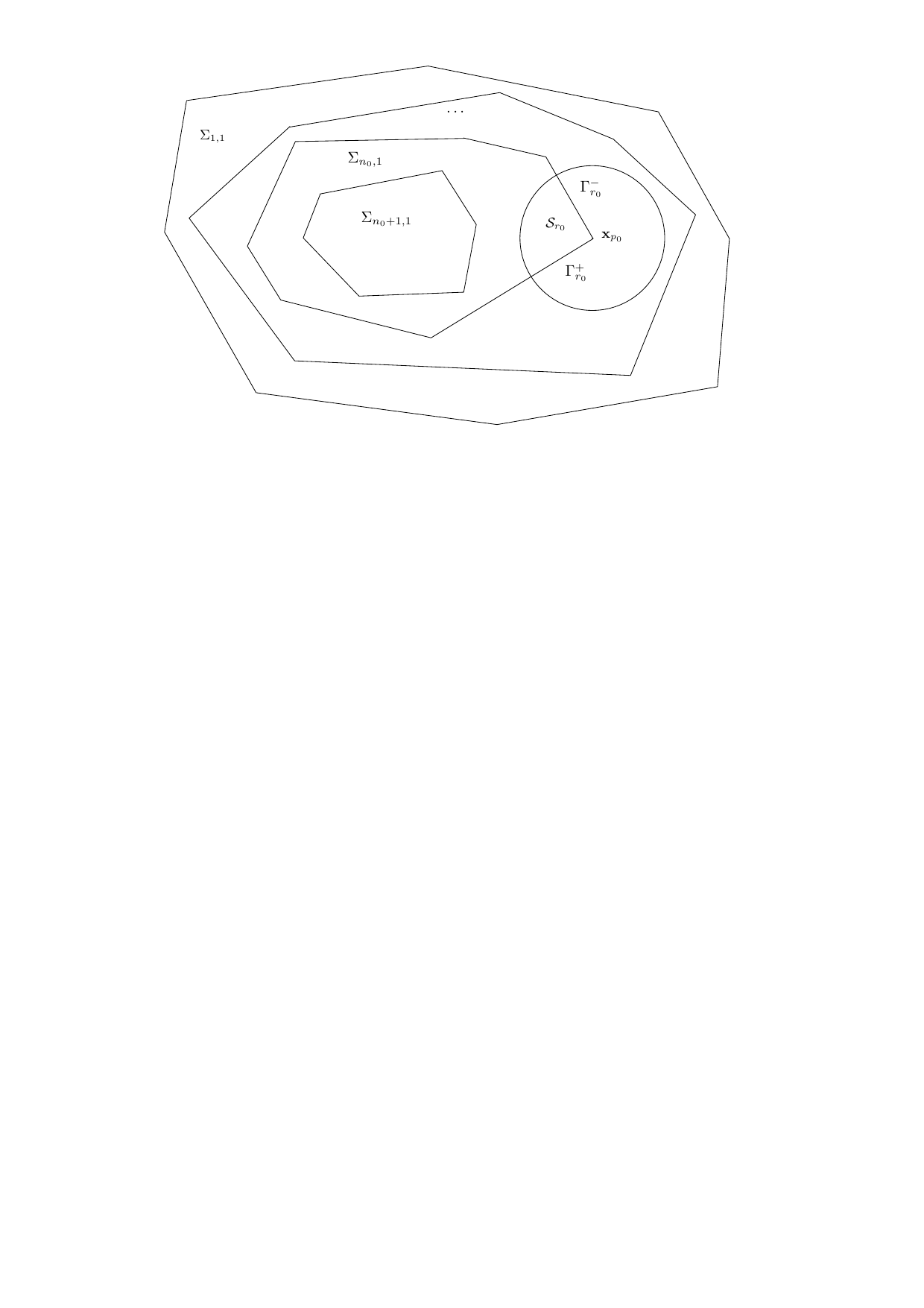}\\
\caption{Schematic illustration of the polygonal-nest structure.}
\label{2fig:nest2}
\end{figure}
\medskip
 \noindent\textbf{Part 2}. In the following we  prove that
\begin{equation}\label{2eq:qeta}
q_{n_0,1}=q_{n_0,2},\ \eta_{n_0,1}=\eta_{n_0,2}.
\end{equation}
Let $\mathbf x_{p_0}$ be a vertex of $\Sigma_{n_0,1}$. Without loss of generality, we may assume that $\mathbf x_{p_0}=\mathbf 0$ since $-\Delta$ is invariant under rigid motion. For sufficiently small $r_0>0$, assume that $\mathcal S_{r_0}\Subset \mathcal D_{n_0,1}=\Sigma_{n_0,1}\setminus\overline{\Sigma_{n_0+1,1}}$; refer to Figure \ref{2fig:nest2} for a schematic illustration.
Therefore, it yields that
\begin{equation}\label{2eq:unest}
\begin{cases}
\Delta u_{n_0,1}^{-}+k^2q_{n_0,1}u_{n_0,1}^{-}=0\hspace*{4.95cm}\mbox{in}\ \mathcal S_{r_0},\\
\Delta u_{n_0-1,1}^{+}+k^2q_{n_0-1,1}u_{n_0-1,1}^{+}=0\hspace*{3.8cm}\mbox{in}\ B_{r_0}\setminus \overline{\mathcal S_{r_0}},\\
\Delta u_{n_0,2}^{-}+k^2q_{n_0,2}u_{n_0,2}^{-}=0\hspace*{4.95cm}\mbox{in}\ \mathcal S_{r_0},\\
\Delta u_{n_0-1,2}^{+}+k^2q_{n_0-1,2}u_{n_0-1,2}^{+}=0\hspace*{3.8cm}\mbox{in}\ B_{r_0}\setminus \overline{\mathcal S_{r_0}},\\
u_{n_0,1}^{-}=u_{n_0-1,1}^{+},\ \partial_\nu u_{n_0,1}^{-}=\partial_\nu u_{n_0-1,1}^{+}+\eta_{n_0,1} u_{n_0-1,1}^{+}, \hspace*{0.5cm}\mbox{on}\ \Gamma_{r_0}^\pm,\\
u_{n_0,2}^{-}=u_{n_0-1,2}^{+},\ \partial_\nu u_{n_0,2}^{-}=\partial_\nu u_{n_0-1,2}^{+}+\eta_{n_0,2} u_{n_0-1,2}^{+}, \hspace*{0.5cm}\mbox{on}\ \Gamma_{r_0}^\pm.\\
\end{cases}
\end{equation}
It is obvious  that $u_{n_0-1,1}^{+}=u_{n_0-1,2}^{+}$ by virtue of \eqref{2eq:ul}. In view of \eqref{2eq:unest}, by Lemma \ref{2thm:conductive}, one has \eqref{2eq:qeta}.

Finally, we can prove $N_1=N_2$   through a proof by contradiction. In fact, if $N_1\not=N_2$, we assume, without loss of generality, that $N_1>N_2$. Therefore, there  exists a corner $\Sigma_{N_1+1,1}$ with a vertex $\mathbf x_{p_0}$ inside $\Sigma_{N_2,2}$. Using a similar argument in proving Theorem \ref{2thm:un-shape}, we can prove that either the total wave field or the gradient of the total wave field vanishes at the vertex of this corner, which contradicts to the  admissible condition of a conductive polygonal-nest medium scatterer in Definition \ref{2def:adm}.

The proof is complete.
\end{proof}

\section*{Acknowledgements}
The work of H. Diao is supported by by National Natural Science Foundation of China  (No. 12371422) and the Fundamental Research Funds for the Central Universities, JLU. The work of X. Fei is supported by NSFC/RGC Joint Research Grant No. 12161160314. The work of H. Liu was supported by the Hong Kong RGC General Research Funds (projects 11311122, 11300821 and 11304224), NSF/RGC Joint Research Fund (project N\_CityU101/21) and the ANR/RGC Joint Research Fund (project A\_CityU203/19).


\begin{thebibliography}{99}

\bibitem{AD}
{\sc R.~A. Adams and J.~J. Fournier}, {\em Sobolev spaces}, vol.~140, Elsevier,
  2003.

\bibitem{AR}
{\sc G.~Alessandrini and L.~Rondi}, {\em Determining a sound-soft polyhedral
  scatterer by a single far-field measurement}, Proc. Amer. Math. Soc., 133
  (2005), pp.~1685--1691, {https://doi.org/10.1090/S0002-9939-05-07810-X}.

\bibitem{ARRV}
{\sc G.~Alessandrini, L.~Rondi, E.~Rosset, and S.~Vessella}, {\em The stability
  for the {C}auchy problem for elliptic equations}, Inverse Problems, 25
  (2009), pp.~123004, 47, {https://doi.org/10.1088/0266-5611/25/12/123004}.

\bibitem{ak92}
{\sc T.~S. Angell and A.~Kirsch}, {\em The conductive boundary condition for
  {M}axwell's equations}, SIAM J. Appl. Math., 52 (1992), pp.~1597--1610,
  {https://doi.org/10.1137/0152092}.

\bibitem{AS}
{\sc S.~N. Armstrong and L.~Silvestre}, {\em Unique continuation for fully
  nonlinear elliptic equations}, Math. Res. Lett., 18 (2011), pp.~921--926,
 {https://doi.org/10.4310/MRL.2011.v18.n5.a9}.

\bibitem{BL}
{\sc E.~Bl\aa~sten and H.~Liu}, {\em Recovering piecewise constant refractive
  indices by a single far-field pattern}, Inverse Problems, 36 (2020),
  pp.~085005, 16, {https://doi.org/10.1088/1361-6420/ab958f}.

\bibitem{BL2021}
{\sc E.~Bl\aa~sten and H.~Liu}, {\em Scattering by curvatures, radiationless
  sources, transmission eigenfunctions, and inverse scattering problems}, SIAM
  J. Math. Anal., 53 (2021), pp.~3801--3837,
 {https://doi.org/10.1137/20M1384002}.

\bibitem{BPS2014}
{\sc E.~Bl\aa~sten, L.~P\"aiv\"arinta, and J.~Sylvester}, {\em Corners always
  scatter}, Comm. Math. Phys., 331 (2014), pp.~725--753,
  {https://doi.org/10.1007/s00220-014-2030-0}.

\bibitem{Bon}
{\sc O.~Bondarenko}, {\em The factorization method for conducting transmission
  conditions}, PhD thesis, Dissertation, Karlsruhe, Karlsruher Institut f{\"u}r
  Technologie (KIT), 2016.

\bibitem{bhk}
{\sc O.~Bondarenko, I.~Harris, and A.~Kleefeld}, {\em The interior transmission
  eigenvalue problem for an inhomogeneous media with a conductive boundary},
  Appl. Anal., 96 (2017), pp.~2--22,
  {https://doi.org/10.1080/00036811.2016.1204440}.

\bibitem{Bon-Liu}
{\sc O.~Bondarenko and X.~Liu}, {\em The factorization method for inverse
  obstacle scattering with conductive boundary condition}, Inverse Problems, 29
  (2013), pp.~095021, 25, {https://doi.org/10.1088/0266-5611/29/9/095021}.

\bibitem{CV23}
{\sc F.~Cakoni and M.~S. Vogelius}, {\em Singularities almost always scatter:
  regularity results for non-scattering inhomogeneities}, Comm. Pure Appl.
  Math., 76 (2023), pp.~4022--4047, {https://doi.org/10.1002/cpa.22117}.

\bibitem{CVX23}
{\sc F.~Cakoni, M.~S. Vogelius, and J.~Xiao}, {\em On the regularity of
  non-scattering anisotropic inhomogeneities}, Arch. Ration. Mech. Anal., 247
  (2023), pp.~Paper No. 31, 15,
  {https://doi.org/10.1007/s00205-023-01863-y}.

\bibitem{CX}
{\sc F.~Cakoni and J.~Xiao}, {\em On corner scattering for operators of
  divergence form and applications to inverse scattering}, Comm. Partial
  Differential Equations, 46 (2021), pp.~413--441,
  {https://doi.org/10.1080/03605302.2020.1843489}.

\bibitem{CDL20}
{\sc X.~Cao, H.~Diao and H.~Liu}, {\em Determining a piecewise conductive
  medium body by a single far-field measurement}, CSIAM Transactions on Applied
  Mathematics, 1 (2020), pp.~740--765,
  {https://doi.org/https://doi.org/10.4208/csiam-am.2020-0020}.

\bibitem{CDLZ20}
{\sc X.~Cao, H.~Diao, H.~Liu, and J.~Zou}, {\em On nodal and generalized
  singular structures of {L}aplacian eigenfunctions and applications to inverse
  scattering problems}, J. Math. Pures Appl. (9), 143 (2020), pp.~116--161,
  {https://doi.org/10.1016/j.matpur.2020.09.011}.

\bibitem{Carleman}
{\sc T.~Carleman}, {\em Sur un probl\`eme d'unicit\'e{} pur les syst\`emes
  d'\'equations aux d\'eriv\'ees partielles \`a{} deux variables
  ind\'ependantes}, vol.~26, 1939.

\bibitem{CY03}
{\sc J.~Cheng and M.~Yamamoto}, {\em Uniqueness in an inverse scattering
  problem within non-trapping polygonal obstacles with at most two incoming
  waves}, Inverse Problems, 19 (2003), pp.~1361--1384,
  {https://doi.org/10.1088/0266-5611/19/6/008}.

\bibitem{CK}
{\sc D.~Colton and R.~Kress}, {\em Inverse acoustic and electromagnetic
  scattering theory}, vol.~93 of Applied Mathematical Sciences, Springer, New
  York, third~ed., 2013, {https://doi.org/10.1007/978-1-4614-4942-3}.

\bibitem{CK18}
{\sc D.~Colton and R.~Kress}, {\em Looking back on inverse scattering theory},
  SIAM Rev., 60 (2018), pp.~779--807, {https://doi.org/10.1137/17M1144763}.

\bibitem{CS83}
{\sc D.~Colton and B.~D. Sleeman}, {\em Uniqueness theorems for the inverse
  problem of acoustic scattering}, IMA J. Appl. Math., 31 (1983), pp.~253--259,
  {https://doi.org/10.1093/imamat/31.3.253}.

\bibitem{DN}
{\sc M.~Dauge and S.~Nicaise}, {\em Oblique derivative and interface problems
  on polygonal domains and networks}, Comm. Partial Differential Equations, 14
  (1989), pp.~1147--1192, {https://doi.org/10.1080/03605308908820649}.

\bibitem{DCL21}
{\sc H.~Diao, X.~Cao, and H.~Liu}, {\em On the geometric structures of
  transmission eigenfunctions with a conductive boundary condition and
  applications}, Comm. Partial Differential Equations, 46 (2021), pp.~630--679,
  {https://doi.org/10.1080/03605302.2020.1857397}.

\bibitem{DFL}
{\sc H.~Diao, X.~Fei, and H.~Liu}, {\em Local geometric properties of
  conductive transmission eigenfunctions and applications}, European Journal of
  Applied Mathematics,  (2024), p.~1šC32,
  {https://doi.org/10.1017/S0956792524000287}.

\bibitem{Diao25ip}
{\sc H.~Diao, X.~Fei, H.~Liu, and L.~Wang}, {\em Determining anomalies in a
  semilinear elliptic equation by a minimal number of measurements}, Inverse
  Problems,  (2025), {https://doi.org/10.1088/1361-6420/adc82a}.

\bibitem{Diaofei24ipi}
{\sc H.~Diao, X.~Fei, H.~Liu, and K.~Yang}, {\em Visibility, invisibility and
  unique recovery of inverse electromagnetic problems with conical
  singularities}, Inverse Probl. Imaging, 18 (2024), pp.~541--570,
  {https://doi.org/10.3934/ipi.2023043}.

\bibitem{Diaotang23jde}
{\sc H.~Diao, H.~Li, H.~Liu, and J.~Tang}, {\em Spectral properties of an
  acoustic-elastic transmission eigenvalue problem with applications}, J.
  Differential Equations, 371 (2023), pp.~629--659,
{https://doi.org/10.1016/j.jde.2023.07.002}.

\bibitem{Diao23book}
{\sc H.~Diao and H.~Liu}, {\em Spectral geometry and inverse scattering
  theory}, Springer, Cham, 2023,
  {https://doi.org/10.1007/978-3-031-34615-6}.

\bibitem{Diao25effective}
{\sc H.~Diao, H.~Liu, Q.~Meng, and L.~Wang}, {\em Effective medium theory for
  embedded obstacles in electromagnetic scattering with applications}, J.
  Differential Equations, 437 (2025), p.~113283,
  {https://doi.org/10.1016/j.jde.2025.113283}.

\bibitem{Diaotao24ip}
{\sc H.~Diao, H.~Liu, and L.~Tao}, {\em Stable determination of an impedance
  obstacle by a single far-field measurement}, Inverse Problems, 40 (2024),
  pp.~Paper No. 055005, 35, {https://doi.org/10.1088/1361-6420/ad3087}.

\bibitem{Diaowang22jde}
{\sc H.~Diao, H.~Liu, and L.~Wang}, {\em Further results on generalized
  {H}olmgren's principle to the {L}am\'e{} operator and applications}, J.
  Differential Equations, 309 (2022), pp.~841--882,
  {https://doi.org/10.1016/j.jde.2021.11.039}.

\bibitem{Diaozhang21ip}
{\sc H.~Diao, H.~Liu, L.~Zhang, and J.~Zou}, {\em Unique continuation from a
  generalized impedance edge-corner for {M}axwell's system and applications to
  inverse problems}, Inverse Problems, 37 (2021), pp.~Paper No. 035004, 32,
  {https://doi.org/10.1088/1361-6420/abdb42}.

\bibitem{Diaotang24ipi}
{\sc H.~Diao, R.~Tang, H.~Liu, and J.~Tang}, {\em Unique determination by a
  single far-field measurement for an inverse elastic problem}, Inverse Probl.
  Imaging, 18 (2024), pp.~1405--1430,
  {https://doi.org/10.3934/ipi.2024020}.

\bibitem{EH18}
{\sc J.~Elschner and G.~Hu}, {\em Acoustic scattering from corners, edges and
  circular cones}, Arch. Ration. Mech. Anal., 228 (2018), pp.~653--690,
  {https://doi.org/10.1007/s00205-017-1202-4}.

\bibitem{GL1}
{\sc N.~Garofalo and F.-H. Lin}, {\em Monotonicity properties of variational
  integrals, {$A_p$} weights and unique continuation}, Indiana Univ. Math. J.,
  35 (1986), pp.~245--268, {https://doi.org/10.1512/iumj.1986.35.35015}.

\bibitem{GL}
{\sc N.~Garofalo and F.-H. Lin}, {\em Unique continuation for elliptic
  operators: a geometric-variational approach}, Comm. Pure Appl. Math., 40
  (1987), pp.~347--366, {https://doi.org/10.1002/cpa.3160400305}.

\bibitem{GRSU}
{\sc T.~Ghosh, A.~R\"uland, M.~Salo, and G.~Uhlmann}, {\em Uniqueness and
  reconstruction for the fractional {C}alder\'on problem with a single
  measurement}, J. Funct. Anal., 279 (2020), pp.~108505, 42,
  {https://doi.org/10.1016/j.jfa.2020.108505}.

\bibitem{GT-ell}
{\sc D.~Gilbarg and N.~S. Trudinger}, {\em Elliptic partial differential
  equations of second order}, vol.~Vol. 224 of Grundlehren der Mathematischen
  Wissenschaften, Springer-Verlag, Berlin-New York, 1977.

\bibitem{hk20}
{\sc I.~Harris and A.~Kleefeld}, {\em The inverse scattering problem for a
  conductive boundary condition and transmission eigenvalues}, Appl. Anal., 99
  (2020), pp.~508--529, {https://doi.org/10.1080/00036811.2018.1504028}.

\bibitem{HLNS}
{\sc N.~Honda, C.-L. Lin, G.~Nakamura, and S.~Sasayama}, {\em Unique
  continuation property of solutions to general second order elliptic systems},
  J. Inverse Ill-Posed Probl., 30 (2022), pp.~5--21,
  {https://doi.org/10.1515/jiip-2020-0073}.

\bibitem{Horm1}
{\sc L.~H\"ormander}, {\em The analysis of linear partial differential
  operators. {III}}, Classics in Mathematics, Springer, Berlin, 2007,
  {https://doi.org/10.1007/978-3-540-49938-1}.
\newblock Pseudo-differential operators, Reprint of the 1994 edition.

\bibitem{Horm2}
{\sc L.~H\"ormander}, {\em The analysis of linear partial differential
  operators. {IV}}, Classics in Mathematics, Springer-Verlag, Berlin, 2009,
  {https://doi.org/10.1007/978-3-642-00136-9}.
\newblock Fourier integral operators, Reprint of the 1994 edition.

\bibitem{HSV16}
{\sc G.~Hu, M.~Salo, and E.~V. Vesalainen}, {\em Shape identification in
  inverse medium scattering problems with a single far-field pattern}, SIAM J.
  Math. Anal., 48 (2016), pp.~152--165,
  {https://doi.org/10.1137/15M1032958}.

\bibitem{JK85}
{\sc D.~Jerison and C.~E. Kenig}, {\em Unique continuation and absence of
  positive eigenvalues for {S}chr\"odinger operators}, Ann. of Math. (2), 121
  (1985), pp.~463--494, {https://doi.org/10.2307/1971205}.
\newblock With an appendix by E. M. Stein.

\bibitem{KT01}
{\sc H.~Koch and D.~Tataru}, {\em Carleman estimates and unique continuation
  for second-order elliptic equations with nonsmooth coefficients}, Comm. Pure
  Appl. Math., 54 (2001), pp.~339--360,
  {https://doi.org/10.1002/1097-0312(200103)54:3<339::AID-CPA3>3.0.CO;2-D}.

\bibitem{KSS24}
{\sc P.-Z. Kow, M.~Salo, and H.~Shahgholian}, {\em On scattering behavior of
  corner domains with anisotropic inhomogeneities}, SIAM J. Math. Anal., 56
  (2024), pp.~4834--4853, {https://doi.org/10.1137/23M1603029}.

\bibitem{LLM}
{\sc M.~Lassas, T.~Liimatainen, and M.~Salo}, {\em The {C}alder\'on problem for
  the conformal {L}aplacian}, Comm. Anal. Geom., 30 (2022), pp.~1121--1184,
 {https://doi.org/10.4310/cag.2022.v30.n5.a6}.

\bibitem{LP}
{\sc P.~D. Lax and R.~S. Phillips}, {\em Scattering theory}, vol.~26 of Pure
  and Applied Mathematics, Academic Press, New York-London, 1967.

\bibitem{Ler}
{\sc N.~Lerner}, {\em Carleman inequalities}, vol.~353 of Grundlehren der
  mathematischen Wissenschaften [Fundamental Principles of Mathematical
  Sciences], Springer, Cham, 2019,
  {https://doi.org/10.1007/978-3-030-15993-1}.
\newblock An introduction and more.

\bibitem{LZ1}
{\sc H.~Liu and J.~Zou}, {\em Uniqueness in an inverse acoustic obstacle
  scattering problem for both sound-hard and sound-soft polyhedral scatterers},
  Inverse Problems, 22 (2006), pp.~515--524,
  {https://doi.org/10.1088/0266-5611/22/2/008}.

\bibitem{MZ92}
{\sc Z.~Mghazli}, {\em Regularity of an elliptic problem with mixed
  {D}irichlet-{R}obin boundary conditions in a polygonal domain}, Calcolo, 29
  (1992), pp.~241--267, {https://doi.org/10.1007/BF02576184}.

\bibitem{SNS}
{\sc E.~Moreira~dos Santos, G.~Nornberg, and N.~Soave}, {\em On unique
  continuation principles for some elliptic systems}, Ann. Inst. H.
  Poincar\'e{} C Anal. Non Lin\'eaire, 38 (2021), pp.~1667--1680,
  {https://doi.org/10.1016/j.anihpc.2020.12.001}.

\bibitem{PSV}
{\sc L.~P\"aiv\"arinta, M.~Salo, and E.~V. Vesalainen}, {\em Strictly convex
  corners scatter}, Rev. Mat. Iberoam., 33 (2017), pp.~1369--1396,
  {https://doi.org/10.4171/RMI/975}.

\bibitem{Protter}
{\sc M.~H. Protter}, {\em Unique continuation for elliptic equations}, Trans.
  Amer. Math. Soc., 95 (1960), pp.~81--91,
  {https://doi.org/10.2307/1993331}.

\bibitem{ST}
{\sc A.~Salda\~na and H.~Tavares}, {\em Least energy nodal solutions of
  {H}amiltonian elliptic systems with {N}eumann boundary conditions}, J.
  Differential Equations, 265 (2018), pp.~6127--6165,
  {https://doi.org/10.1016/j.jde.2018.07.013}.

\bibitem{PS21}
{\sc M.~Salo and H.~Shahgholian}, {\em Free boundary methods and non-scattering
  phenomena}, Res. Math. Sci., 8 (2021), pp.~Paper No. 58, 19,
  {https://doi.org/10.1007/s40687-021-00294-z}.

\bibitem{Sogge}
{\sc C.~D. Sogge}, {\em Fourier integrals in classical analysis}, vol.~210 of
  Cambridge Tracts in Mathematics, Cambridge University Press, Cambridge,
  second~ed., 2017, {https://doi.org/10.1017/9781316341186}.

\bibitem{Tat}
{\sc D.~Tataru}, {\em Unique continuation problems for partial differential
  equations}, in Geometric methods in inverse problems and {PDE} control,
  vol.~137 of IMA Vol. Math. Appl., Springer, New York, 2004, pp.~239--255,
  {https://doi.org/10.1007/978-1-4684-9375-7\_8}.

\end{thebibliography}
	\end{document}